\newtheorem{thm}{Theorem}[section]
\newtheorem{prop}[thm]{Proposition}
\newtheorem{lem}[thm]{Lemma}
\newtheorem{cor}[thm]{Corollary}
\numberwithin{equation}{section} 
\theoremstyle{definition}
\newtheorem{defn}[thm]{Definition}
\theoremstyle{remark}
\newcommand{\ato}{\alpha_{21}}
\renewcommand{\H}{\mathcal H^\ast}
\newcommand{\Kg}{K_\Gamma (w, z)}
\newcommand{\Kh}{K_h}
\newcommand{\Rh}{\mathcal R_h}
\newcommand{\s}{\mathfrak h_{z_1, z_2}}
\newcommand{\tho}{\theta_1}
\newcommand{\ttw}{\theta_2}
\newcommand{\Tg}{\mathbf{t}_\Gamma}
\newcommand\Humlaut[1]{\stackengine{-.05ex}{#1}{\hstretch{.8}{\vstretch{.65}{%
  \mkern1mu\scriptscriptstyle''}}}{O}{c}{F}{F}{S}}
\newcommand{\bndry}{b}
\renewcommand{\H}{\mathcal H}
\begin{document} 

\title[Symmetrization of a restricted Cauchy kernel] 
{Symmetrization of a Cauchy-like kernel on curves}

\author[Lanzani and Pramanik]{Loredana Lanzani
and Malabika Pramanik}
\address{
Dept. of Mathematics,       
Syracuse University 
Syracuse, NY 13244-1150 USA}
  \email{llanzani@syr.edu}
\address{
Dept. of Mathematics\\University of British Columbia
\\
Room 121, 1984 Mathematics Rd.
\\Vancouver, B.C. Canada   V6T 1Z2}
\email{malabika@math.ubc.ca}
  \thanks{2000 \em{Mathematics Subject Classification:} 30C40, 31A10, 31A15}
\thanks{{\em Keywords}: Cauchy integral, convex, double layer potential,
  Menger curvature, singular integral, kernel symmetrization, positive kernel}
\begin{abstract} Given a curve $\Gamma\subset \mathbb C$ with specified regularity, we
 investigate boundedness and positivity for a certain three-point symmetrization of a 
Cauchy-like kernel $K_{\Gamma}$ 
whose definition is
dictated by the geometry and complex function theory of the domains bounded by $\Gamma$. Our results show that $\mathtt S[\text{Re} K_{\Gamma}]$ and $\mathtt S[\text{Im} K_{\Gamma}]$ (namely, the symmetrizations of the real and imaginary parts of $K_{\Gamma}$)  behave very differently from their
counterparts for the 
Cauchy kernel
previously studied in the literature. 
For instance, 
the quantities $\mathtt S[\text{Re} K_{\Gamma}](\mathbf z)$ and $\mathtt S[\text{Im} K_{\Gamma}](\mathbf z)$ can behave like 
$\frac32c^2(\mathbf z)$ and $-\frac12c^2(\mathbf z)$, where $\mathbf z$ is any three-tuple of points in $\Gamma$ and
$c(\mathbf z)$ is the Menger curvature of $\mathbf z$.
For the 
original 
Cauchy kernel, an iconic result of M. Melnikov gives that the symmetrized forms of the real and imaginary parts are each equal to $\frac12c^2(\mathbf z)$ for all three-tuples in $\mathbb C$. 
\end{abstract}
\maketitle

\section{Introduction}
 Given a complex-valued function $K(w, z)$
defined on a subset of $\mathbb C^2$ except possibly the diagonal $\{(z,z) : z \in
\mathbb C\}$,  we consider the following symmetric form
associated with $K$: 

\begin{equation}\label{E:K-sym} 
\mathtt S\,[K](\mathbf z) :=   \sum\limits_{\sigma\in S_3}
  K(z_{\sigma(1)},\, z_{\sigma(2)})\, 
  \overline{K(z_{\sigma(1)},\, z_{\sigma(3)})} 
\end{equation} 
where $S_3$ is the group of permutations over three elements,  and $\mathbf z = \{z_1, z_2, z_3\}$ is any three-tuple of points in $\mathbb C$ for which the above expression is meaningful. The above definition of course also makes sense for real-valued $K(w, z)$.

A primary
reference is
 the symmetric form of the kernel
 \begin{equation}\label{E:ker-0}
K_0(w, z) \ :=\ 
\frac{1}{w-z}, \qquad z, w \in \mathbb C, \; z \ne w\, 
\end{equation}
along with
three seminal
identities 
discovered by M. Melnikov \cite{M};
these are
 \begin{align}\label{E:sym-id-0p}
\mathtt S\,[K_0](\mathbf z) &= 
c^2(\mathbf z), \\
\mathtt S\,[\mathrm{Re}K_0](\mathbf z) &= 
\frac{1}{2}\, c^2(\mathbf z), \label{E:sym-id-0pr}\\
\mathtt S\,[\mathrm{Im}K_0](\mathbf z) &=
\frac{1}{2}\, c^2(\mathbf z)\, . \label{E:sym-id-0pi}
  \end{align}
  \vskip0.1in
\noindent  Here 
   $\mathbf z = \{z_1, z_2, z_3\}$ is any three-tuple of distinct points in $\mathbb C$;  $\mathrm{Re}K_0$ (resp. $\mathrm{Im}K_0$) is the real (resp. imaginary) part of \eqref{E:ker-0}, and $c(\mathbf z)$ denotes the {\bf Menger curvature} associated with $\mathbf z$; we recall that 
$c(\mathbf z)$ is defined to be either zero or the reciprocal of the radius of the unique circle passing through $ z_1, z_2$ and $z_3$ according to whether the three points are, or are not, collinear.
Identities  \eqref{E:sym-id-0p}-\eqref{E:sym-id-0pi} are often referred to
as ``Melnikov's miracle'' in recognition of the profound 
influence they have had on the theory of singular integral operators \cite[p. 2]{T}.
\vskip0.1in
Because the kernel $K_0$ in \eqref{E:ker-0} is defined in the maximal setting of $(z, w)\in \mathbb C\times\mathbb C\setminus \{z=w\}$ we will henceforth refer to it as the {\bf universal Cauchy kernel} or the {\bf original Cauchy kernel}.
In this paper we study the effects of the symmetrization \eqref{E:K-sym} on the {\bf $\Gamma$-restricted Cauchy kernel}
\begin{equation}\label{E:Ker-Gamma}
\Kg \ :=\ 
\frac{1}{2\pi i}\,\frac{\Tg(w)}{w-z}, \qquad z, w \in \Gamma, \; z \ne w\, 
\end{equation}
where $\Gamma\subset\mathbb C$ is a given rectifiable curve, and $\Tg (w)$ is the unit tangent vector to $\Gamma$ at $w$
in the counterclockwise direction.
\vskip0.1in
At  first glance  $K_\Gamma$ would appear to be only a minor variant of $K_0$ because  $\mathtt S\,[K_\Gamma](\mathbf z) = \mathtt S\,[K_0](\mathbf z)$ for {\em any} $\Gamma$ and any three-tuple
 of distinct points.
But the presence of $\Tg(w)$ renders the function $K_\Gamma (w, z)$ non-homogeneous (unless $\Gamma$ is a line): in this respect the 
$\Gamma$-restricted Cauchy kernel \eqref {E:Ker-Gamma} is substantially different in nature from the restriction to $\Gamma$ of the
universal 
Cauchy kernel \eqref{E:ker-0}. This distinction is especially relevant in complex analysis, where the numerator $\Tg(w)$ in \eqref{E:Ker-Gamma}
 is indispensable
already for characterizing the holomorphic Hardy spaces $H^p(\Gamma, \sigma)$. Here and throughout $\sigma$ will denote the arc-length measure for $\Gamma$.

\vskip0.1in
Our long-term goal is to employ kernel-symmetrization techniques to study the complex function theory of the domains bounded by $\Gamma$. 
 As a first step in this direction, here we establish base-line results for $\mathtt S[K](\mathbf z)$ where
  $K$ is any of
$\mathrm{Re}{K_\Gamma}$ and $\mathrm{Im}{K_\Gamma}$ (the real and imaginary parts of $K_\Gamma$). 
To be precise, we consider
the following two basic features of $K_0$
   \vskip0.1in
   \begin{itemize}
   \item[{\tt(i)}] boundedness relative to  $c^2(\mathbf z)$ of  each of
    $\mathtt S\,[K_0](\mathbf z)$,
 $\mathtt S\,[\mathrm{Re}\, K_0](\mathbf z)$ and $\mathtt S\,[\mathrm{Im}K_0](\mathbf z)$; 
\vskip0.1in
 \item[{\tt(ii)}] positivity of each of   $\mathtt S\,[K_0](\mathbf z)$,
 $\mathtt S\,[\mathrm{Re}\, K_0](\mathbf z)$ and $\mathtt S\,[\mathrm{Im}K_0](\mathbf z)$
 \end{itemize}
 \vskip0.1in
which are automatically granted by \eqref{E:sym-id-0p}-\eqref{E:sym-id-0pi}, and it is these features that we explore here for $\mathtt S\,[\mathrm{Re}{K_\Gamma}](\mathbf z)$ and $\mathtt S\,[\mathrm{Im}{K_\Gamma}](\mathbf z)$.
 \subsection{The historical context: a brief review} 
  In 1995 Melnikov and Verdera \cite{MV} discovered   that 
   \eqref{E:sym-id-0p}  leads to a  new proof of the celebrated  $L^2(\Gamma, \sigma)$-regularity of the {\bf Cauchy transform} 
   for a planar Lipschitz curve 
     \cite{{Calderon},{CMM}}:
  \begin{equation}\label{E:CT}
f\mapsto   \text{p.v.}\int\limits_{\Gamma} f(w) K_0(w, z)d\sigma(w)\, .
  \end{equation}
  As noted in \cite[Section
 3.7.4]{T}, this new method of proof combined with
   \eqref{E:sym-id-0pr} 
  also shows  that $L^2$-regularity of the Cauchy transform is, in effect, {\em equivalent} to the regularity of its real part alone that is, of the 
operator obtained by replacing $K_0(w, z)$ in \eqref{E:CT} with $\mathrm{Re}K_0 (w, z)$;
  of course \eqref{E:sym-id-0pi} gives an analogous statement for $\mathrm{Im}K_0$.

 This new approach was amenable to a large class of measures $\mu$ and
  ultimately
 led to 
 ground-breaking progress towards the resolution of 
long-standing open problems in geometric measure theory known as the Vitushkin's conjecture and the Painlev\`e problem \cite{{MMV}, {T1}, {T2}}. 
At the core of this work is the discovery that Melnikov's miracle \eqref{E:sym-id-0p}-\eqref{E:sym-id-0pi}
brings to the fore deep connections between the $L^2(\mu)$-boundedness of 
 the Cauchy transform;
  the notion  of curvature of the reference measure $\mu$, \cite{M}; and the rectifiability of the support of $\mu$. This circle of ideas is often referred to as ``the curvature method for $K_0$''; we defer to the excellent surveys \cite{P} and \cite{T} for detailed reviews of the extensive literature. 

 The interplay of
analysis and geometry as manifested in \eqref{E:sym-id-0p}-\eqref{E:sym-id-0pi}, in fact already at the level of the basic features {\tt(i)} and {\tt (ii)},
has inspired the energetic pursuit of analogous connections
 for other Calder\'on-Zygmund kernels $K$ and other notions of curvature, with diverse objectives and mixed success.
 It has been observed (see for instance \cite[Section 3.7.4]{T}) that condition {\tt(ii)} fails for most kernels $K$, 
thereby ruling out
 the possibility of a non-negative curvature method valid in a broad context.
 For instance, Farag \cite{F} shows that there is no higher-dimensional analogue of Menger-like curvatures stemming from Riesz transforms with integer exponents. Prat \cite{Prat2004} on the other hand shows that $\mathtt S\,[K]$ {\em{is}} non-negative for fractional signed
Riesz kernels $K$ with homogeneity $- \alpha$, $0 < \alpha < 1$, using it to prove
unboundedness of associated Riesz transforms on certain measure spaces. Lerman and Whitehouse
\cite{{LW1},{LW2}} introduce discrete and continuous variants of Menger-type
curvatures in a real separable Hilbert space. Their definition of
curvature uses general simplices instead of three-tuples of points; curvatures such as these have applications to problems in multiscale geometry.
A number of recent articles, notably Chousionis-Mateu-Prat-Tolsa \cite{{CMPT}, {CMPT2}}; Chousionis-Prat \cite{CP}; Chunaev \cite{C}, and Chunaev-Mateu-Tolsa \cite{{CMT-1}, {CMT-2}}, explore curvature methods
for various 
kinds of singular integral operators; certain aspects of the techniques developed in those papers are relevant to the present work and are discussed in section \ref{S:appendix}. 

\subsection{Our context} While Menger curvature has
  been employed primarily to study the $L^2(\mu)$-regularity of 
Calder\`on-Zygmund operators and their implications to geometric measure theory, our long term goal is to investigate the connection between Menger curvature and 
 the {\bf Cauchy-Szeg\Humlaut{o} 
projection $\mathcal S_\Gamma$}, 
which is
the unique, orthogonal projection of $L^2(\Gamma, \sigma)$ onto $H^2(\Gamma, \sigma)$, for a given
rectifiable curve $\Gamma$.
The Cauchy-Szeg\Humlaut{o} projection is a singular integral operator whose integration kernel is almost never known in explicit form and in general it is not Calder\`on-Zygmund; what's more, $\mathcal S_\Gamma$ is trivially bounded on $L^2(\Gamma, \sigma)$ whereas proving boundedness in $L^p$ for $p\neq2$ is a difficult problem known as ``the $L^p$-regularity problem for $\mathcal S_\Gamma$''. On the other hand, the Cauchy-Szeg\Humlaut{o} projection bears an intimate connection with 
the {\bf Kerzman-Stein operator $\mathcal A_\Gamma$} whose integration kernel
is 
\begin{equation}\label{E:KS}
A_\Gamma(w, z):= \Kg - \overline{K_\Gamma (z, w)},\qquad z, w \in \Gamma, \; z \ne w.
\end{equation}
The connection with the Cauchy-Szeg\Humlaut{o} projection transpires whenever the Kerzman-Stein operator satisfies finer properties than $L^p$-regularity: for instance, compactness in $L^p(\Gamma, \sigma)$.
Such connection is one reason for
our interest in $K_\Gamma$ and its real and imaginary parts. In fact $K_\Gamma$
  is relevant to the analysis of various reproducing kernel Hilbert spaces (the holomorphic Hardy space $H^2(\Gamma, \sigma)$ being
   one such instance see e.g., \cite[p. 376]{F}), whereas 
  $\mathrm{Re}{K_\Gamma}$
  is of particular interest in potential theory since it is the integration kernel of the {\bf double layer potential operator} \cite[(1.14)]{LPg}.  We defer to \cite{{Be}, {Du}, {KS}, {LS}}
for the precise definitions and  the statements of the main results on these topics, and for references to
 the extensive literature. 

\vskip0.1in
We focus on the restricted setting of a curve parametrized as a graph
 \begin{equation}\label{E:curve}
 \Gamma = \{z= x+iA (x), \ x\in J = (a, b)\subseteq\mathbb R\}
 \end{equation}
 where the function $A(x)$ is of class $C^1$ or better
  (as specified in the statement of each result below)
 and we represent the righthand side of \eqref{E:Ker-Gamma} in parametric form, giving
 \begin{equation}\label{E:def-restricted}
K_\Gamma (w, z) = \frac{1}{2\pi}\frac{A'(x)-i}{\left(1 + (A'(x))^2\right)^{1/2}\left[ x-y + i(A(x)-A(y))\right]}
\end{equation}
where $w=x+iA(x),\ z= y+i A(y),$ and  $x, y \in J$ with $x\neq y$.

 \vskip0.1in
  The case when $J$ is the entire real line  has special  relevance in complex analysis because in this case 
   $K_\Gamma $ agrees with the integration
  kernel of the 
  {\bf Cauchy integral operator} associated with  the domain
   \begin{equation}\label{E:domain}
   \Omega := \{ y< A(x),\ x\in J\},
   \end{equation}
namely the operator
 \begin{equation}\label{E:defCauchyop}
f \mapsto \frac{1}{2\pi i}\int_{\bndry \Omega}\! f(w)\, \frac{dw}{w-z},\quad z\notin \text{Supp}(f).
\end{equation}
 As is well known \cite{{Du}, {KS}, {LS}}, the Cauchy integral operator 
 produces and reproduces\footnote{that is, it
 is a {\em projection}:
   $L^p(\bndry\Omega, \sigma)\to H^p(\bndry\Omega, \sigma)$.} functions in the holomorphic Hardy space $H^2(\bndry\Omega, \sigma)$
   (more generally, functions in $H^p(\bndry\Omega, \sigma), 1\leq p\leq \infty$) 
 and 
 it plays a distinguished role in the analysis of the 
 Cauchy-Szeg\Humlaut{o} projection and the Kerzman-Stein operator \eqref{E:KS}.
\vskip0.1in

\noindent To see the 
 connection between \eqref{E:defCauchyop} 
 and the kernel \eqref{E:def-restricted},
  we first
  recall
that 
$dw$ in \eqref{E:defCauchyop}
 is shorthand for the pull-back
$j^{\ast} dw$ where $j: \Gamma \hookrightarrow \mathbb C$ is the inclusion map.
With this in place, and again writing $x+iA(x)$ for $w \in\Gamma$, we have
\begin{align*} 
\frac{1}{2\pi i}\, j^{\ast}dw  &= \frac{1}{2\pi i}\,d(x + iA(x)) \\ &= \frac{1}{2\pi i}\,(1 + i A'(x))\, dx \\ &=
\frac{1}{2\pi}\, \frac{A'(x)-i}{\sqrt{1 + (A'(x))^2}}\, d\sigma(w)\, ,
  \end{align*} 
where $\sigma$ is the arc-length measure for
  $ \bndry\Omega \equiv \Gamma$ whose density is
 $d\sigma(w) = s(x) dx$, with $s(x) = \sqrt{1 + (A'(x))^2}$.
It is now clear that the kernel in \eqref{E:defCauchyop} interpreted as  an integral with respect to the arc-length measure for $\Gamma$, 
agrees with $K_\Gamma $
and we will henceforth  ignore the constant factor $1/2\pi $. 
 
\vskip0.1in
\noindent Note that the restriction to $\Gamma$ of the
 universal 
Cauchy kernel,
 namely  the function
$j^*K_0$, 
corresponds to the case when $A$
 is constant, that is  the situation
  when $\Gamma$ is a horizontal line. On the other hand, for general $\Gamma$ we have $j^*K_0\neq K_\Gamma$. The
 distinction between these two kernels is especially significant in the context of holomorphic Hardy space theory; for instance, the analysis  of the 
 Cauchy-Szeg\Humlaut{o} projection
 performed in e.g., \cite{KS}
  and 
  \cite[
  Theorem 2.1]{LS} relies upon a cancellation of singularities that is enjoyed by the
Kerzman-Stein kernel \eqref{E:KS}
but is {\em not} enjoyed by $j^*K_0(w, z) - \overline{j^*K_0(z, w)}$ unless $\Gamma$ is a horizontal line.
Already in the example of the parabola $\Gamma:= \{x+i x^2\, , x\in\mathbb R\}$, it is easy to see that $j^*K_0(w, z) - \overline{j^*K_0(z, w)}$ has same principal singularity
as $j^*K_0 (w, z)$,
whereas the Kerzman-Stein kernel \eqref{E:KS}
is in fact a  {\em smooth} function of $(w, z) \in \Gamma \times \Gamma$, even along the diagonal $\{w=z\}$. 
\subsection{Main results} 
We establish results of two kinds: local on $\Gamma$, valid for three-tuples of distinct points on $\Gamma$ that are near a point $z_0\in \Gamma$ of non-vanishing curvature; and global on $\Gamma$ (for any three-tuple of distinct points in $\Gamma$). 
 \subsubsection{Sharp local estimates on $\Gamma$} Here we require $\Gamma$ to be of class $C^3$; we show that each of $\mathtt S\,[ \mathrm{Re} K_\Gamma](\mathbf z)$ and $\mathtt S\,[ \mathrm{Im} K_\Gamma](\mathbf z)$ is locally relatively bounded near any point in $\Gamma$ with non-zero signed curvature, but only the former will be non-negative, in fact strictly positive and forcing the latter to be strictly negative. 
 \vskip0.1in
\begin{thm}\label{T:Re and Im local} 
Suppose that $A$ is of class $C^3$ (i.e., $A$ is thrice continuously differentiable), 
and that
 $x_0 \in J$ is such that $A''(x_0) \ne 0$. 
Then for any $\epsilon > 0$, there exists $\delta = \delta(x_0, A, \epsilon)>0$ such that for 
\[ I = (x_0 - \delta, x_0 + \delta)\subset J \qquad \text{ and }  \qquad \Gamma (I)= \{ z = x + i A(x) : x \in I\} \]
the following statements
 hold for any three-tuple $\mathbf z$ of distinct points on
 $$
  \Gamma (I)^3 = \Gamma (I) \times \Gamma (I) \times \Gamma (I). $$
\begin{enumerate}[(a)] 
\item If $\displaystyle{\kappa_0 = A''(x_0)/s(x_0)^3}$
denotes the curvature of \ $\Gamma $ at $z_0= x_0+iA(x_0)$, then
\begin{equation}\label{E:meng-bdd} 
c^2(\mathbf z) = \kappa_0^2 + r(\mathbf z), \quad \text{with}\ 
|\,r(\mathbf z)\,|<\epsilon\, . \ \ 
\end{equation}  
\vskip0.1in
\item We have
\begin{equation}\label{E:Re and Im local}
\Bigl|\,\mathtt S[\mathrm{Re}K_\Gamma ](\mathbf z) - \frac{3}{2}c^2(\mathbf z) \Bigr| < \epsilon, \quad  \Bigl|\,\mathtt S[\mathrm{Im}K_\Gamma ](\mathbf z) +\frac{1}{2}c^2(\mathbf z) \Bigr| < \epsilon. 
\end{equation} 
\end{enumerate} 
\end{thm}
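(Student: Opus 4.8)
The plan is to expand both symmetrized forms directly from the parametric representation \eqref{E:def-restricted}, Taylor-expand the curve data around $x_0$, and show that the leading-order term reproduces the claimed multiple of $c^2(\mathbf z)$ while the remainder can be made uniformly small by shrinking $\delta$. Part (a) is essentially a continuity statement: the Menger curvature $c(\mathbf z)$ is a continuous function of the three points (away from coincidence), and as all three points collapse toward $z_0$ it converges to the curvature $\kappa_0$ of $\Gamma$ at $z_0$; so one fixes $\delta$ small enough that $|c^2(\mathbf z)-\kappa_0^2|<\epsilon$, which is \eqref{E:meng-bdd}. The only subtlety is that $c(\mathbf z)$ need not extend continuously to the full diagonal, but it does extend continuously to the locus where the three points coincide \emph{along a $C^2$ curve}, with limiting value the curvature, and this is all that is needed here.

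For part (b), I would write $\mathbf z=\{z_1,z_2,z_3\}$ with $z_j=x_j+iA(x_j)$, $x_j\in I$, and set $K_{jk}:=K_\Gamma(w_j,z_k)$ using \eqref{E:def-restricted}; then $\mathtt S[\mathrm{Re}K_\Gamma](\mathbf z)$ and $\mathtt S[\mathrm{Im}K_\Gamma](\mathbf z)$ are explicit finite sums of products $(\mathrm{Re}\,\text{or}\,\mathrm{Im}\,K_{jk})(\mathrm{Re}\,\text{or}\,\mathrm{Im}\,K_{j\ell})$ over the six permutations. The key algebraic input is that $\mathtt S[K_\Gamma](\mathbf z)=\mathtt S[K_0](\mathbf z)=c^2(\mathbf z)$ for \emph{every} curve (as noted in the excerpt, because the tangent factors have modulus one and telescope out of the symmetrized product), together with the analogous ``cross'' combinations. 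Using $\mathrm{Re}=\frac12(K+\overline K)$ and $\mathrm{Im}=\frac1{2i}(K-\overline K)$, one expands
\begin{align*}
\mathtt S[\mathrm{Re}K_\Gamma](\mathbf z) &= \tfrac14\bigl(\mathtt S[K_\Gamma](\mathbf z)+\overline{\mathtt S[K_\Gamma](\mathbf z)}+2\,\mathrm{Re}\,\mathtt P[K_\Gamma](\mathbf z)\bigr),\\
\mathtt S[\mathrm{Im}K_\Gamma](\mathbf z) &= \tfrac14\bigl(\mathtt S[K_\Gamma](\mathbf z)+\overline{\mathtt S[K_\Gamma](\mathbf z)}-2\,\mathrm{Re}\,\mathtt P[K_\Gamma](\mathbf z)\bigr),
\end{align*}
where $\mathtt P[K_\Gamma](\mathbf z):=\sum_{\sigma\in S_3}K(z_{\sigma(1)},z_{\sigma(2)})K(z_{\sigma(1)},z_{\sigma(3)})$ is the ``unconjugated'' symmetric form. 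Since $\mathtt S[K_\Gamma](\mathbf z)=c^2(\mathbf z)$ is real, the two quantities reduce to $\frac12 c^2(\mathbf z)\pm\frac12\mathrm{Re}\,\mathtt P[K_\Gamma](\mathbf z)$, so everything hinges on showing $\mathrm{Re}\,\mathtt P[K_\Gamma](\mathbf z)=c^2(\mathbf z)+o(1)$ as $\delta\to0$; then $\mathtt S[\mathrm{Re}K_\Gamma]\to\frac32 c^2$ and $\mathtt S[\mathrm{Im}K_\Gamma]\to-\frac12 c^2$, which is \eqref{E:Re and Im local}.

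Thus the heart of the proof is the asymptotic evaluation of $\mathrm{Re}\,\mathtt P[K_\Gamma](\mathbf z)$. I would factor each $K_\Gamma(w_j,z_k)$ as $\frac{1}{2\pi}\,\tau(x_j)\,\frac{1}{\zeta_j-\zeta_k}$ where $\tau(x)=(A'(x)-i)/s(x)$ is the (conjugate) unit tangent and $\zeta_j=x_j+iA(x_j)$; in each term of $\mathtt P$ the factor $\tau(x_{\sigma(1)})^2$ appears, which is \emph{not} unimodular-canceling the way the $\tau\bar\tau$ pairs are in $\mathtt S[K_0]$, and this is precisely the source of the discrepancy from Melnikov's $\frac12 c^2$. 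Writing $\tau(x_{\sigma(1)})^2=\tau(x_0)^2+O(\delta)$ and pulling the common constant out, $\mathtt P[K_\Gamma](\mathbf z)=\tau(x_0)^2\,\mathtt P[K_0](\mathbf z)+O(\delta)\cdot(\text{scale}^{-2})$. One then needs the elementary identity $\mathtt P[K_0](\mathbf z)=\sum_\sigma\frac{1}{(\zeta_{\sigma(1)}-\zeta_{\sigma(2)})(\zeta_{\sigma(1)}-\zeta_{\sigma(3)})}=0$ — a partial-fractions/telescoping fact — so the leading term vanishes and one must go to the next order in the Taylor expansion of $\tau$ and of the $1/(\zeta_j-\zeta_k)$ factors. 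Here I would parametrize $x_j=x_0+t_j h$ with $h\to0$, Taylor-expand $A(x)=A(x_0)+A'(x_0)(x-x_0)+\frac12 A''(x_0)(x-x_0)^2+O(h^3)$, and compute the first non-vanishing term of $\mathrm{Re}\,\mathtt P[K_\Gamma]$ in powers of $h$; by homogeneity considerations and the known value $c^2(\mathbf z)\to\kappa_0^2$, the surviving term must be $\kappa_0^2+o(1)=c^2(\mathbf z)+o(1)$, with the $O(h)$ error from $\tau$ controlled uniformly once $\delta$ is small.

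The main obstacle I anticipate is the bookkeeping of this next-order expansion: one is extracting a finite limit from a sum of $1/h^2$-singular terms with coefficients that cancel at orders $h^{-2}$ and $h^{-1}$, so the computation is delicate and the $C^3$ hypothesis on $A$ is needed to control the error term uniformly (the remainder $r(\mathbf z)$ in part (a) and the $o(1)$'s here). A cleaner route, which I would pursue in parallel, is to avoid the brute-force Taylor expansion by exploiting the scaling/affine-invariance of $\mathtt P$: since $\mathtt P[K_0]\equiv0$ identically, $\mathrm{Re}\,\mathtt P[K_\Gamma](\mathbf z)=\mathrm{Re}\sum_\sigma(\tau(x_{\sigma(1)})^2-\tau(x_0)^2)K_0(\zeta_{\sigma(1)},\zeta_{\sigma(2)})K_0(\zeta_{\sigma(1)},\zeta_{\sigma(3)})$ exactly, and now each bracketed factor is $O(|x_{\sigma(1)}-x_0|)=O(h)$ while the product of $K_0$'s is $O(h^{-2})$ — but with the precise leading coefficients, $\tau(x)^2-\tau(x_0)^2\approx 2\tau(x_0)\tau'(x_0)(x-x_0)$ and $\tau'(x_0)$ expressible through $A'(x_0),A''(x_0)$, one recognizes the resulting rational expression in the $t_j$'s as exactly the Menger-curvature-squared formula evaluated at $x_0$. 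Verifying that this rational identity indeed collapses to $\kappa_0^2$ (rather than some other constant) is the one genuinely computational point that cannot be skipped, but it is a finite check and is forced by comparison with \eqref{E:sym-id-0p} applied on a circle of radius $1/\kappa_0$ osculating $\Gamma$ at $z_0$.
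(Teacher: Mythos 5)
Your decomposition $\mathtt S[\mathrm{Re}K_\Gamma]=\tfrac12\mathtt S[K_\Gamma]+\tfrac12\mathrm{Re}\,\mathtt P[K_\Gamma]$ (with $\mathtt P[K]=\sum_\sigma K(z_{\sigma(1)},z_{\sigma(2)})K(z_{\sigma(1)},z_{\sigma(3)})$ and $\mathtt P[K_0]\equiv 0$ by Lagrange interpolation) is correct and genuinely different from what the paper does, but there are two problems. First, a numerical one: since $\mathtt S[K_\Gamma]=c^2$, reaching $\tfrac32 c^2$ forces $\mathrm{Re}\,\mathtt P[K_\Gamma]\to 2c^2$, not $c^2$ as you assert; this is not a typo you can later fix, because your ``osculating circle'' sanity check is being used to \emph{determine} the constant, and it would produce the wrong target. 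Second, and more seriously, the heart of the argument — evaluating $\mathrm{Re}\,\mathtt P[K_\Gamma]$ — is not carried out, and is further from done than your outline suggests. After subtracting $\tau_0^2\mathtt P[K_0]=0$, each term of $\sum_\sigma(\tau(x_{\sigma(1)})^2-\tau_0^2)K_0K_0$ is naively $O(h)\cdot O(h^{-2})=O(h^{-1})$, so a further cancellation is still needed. Indeed, using $\tau(x)^2=\frac{A'(x)-i}{A'(x)+i}$ one gets $\tau(x_j)^2-\tau_0^2=\frac{2i(A'(x_j)-A'(x_0))}{(A'(x_j)+i)(A'(x_0)+i)}\approx \text{const}\cdot(x_j-x_0)$, and the corresponding linear-coefficient term of the sum also vanishes, by $\sum_j (x_j-x_0)/\prod_{m\ne j}(x_j-x_m)=0$. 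The finite limit therefore emerges from \emph{second-order} effects (the quadratic term of $\tau^2-\tau_0^2$ and the curvature correction to $z_j-z_k\approx(1+iA'(x_0))(x_j-x_k)$), and identifying the resulting coefficient as $2\kappa_0^2$ is a non-trivial computation that neither ``homogeneity considerations'' nor comparison with an osculating circle can substitute for — a circle is not a graph in this parametrization, and the tangent factor in $K_\Gamma$ depends on the curve, not just on $c(\mathbf z)$.

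The paper avoids all of this by never complexifying into the unconjugated form $\mathtt P$: it works directly with the parametric expression for $\mathrm{Re}K_\Gamma$ (Lemma~\ref{L:sym-id-curves}), rewrites each factor as $A'(x_j)(x_j-x_k)-(A(x_j)-A(x_k))=\int_{x_k}^{x_j}(A'(x_j)-A'(t))\,dt\approx \tfrac12A''(x_0)(x_j-x_k)^2$, and, after dividing by $\ell_k^2\ell_l^2 s^2(x_j)$, finds that \emph{each} of the three terms in the real-kernel simplification $\mathtt S[\mathrm{Re}K_\Gamma]=2\sum_{j;k<l}(\cdots)$ is separately $\approx\tfrac12\kappa_0^2$. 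This ``local superpositivity'' means there is no delicate cancellation to track; the constants $\tfrac32$ and $-\tfrac12$ are then forced by the split $\mathtt S[K_\Gamma]=\mathtt S[\mathrm{Re}K_\Gamma]+\mathtt S[\mathrm{Im}K_\Gamma]=c^2$ together with part (a). Your route could in principle be completed, but it trades the paper's term-by-term positivity for a two-stage cancellation that you have not actually performed, and which is exactly where the constant $\tfrac32$ has to be earned.
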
 
\vskip0.1in
\noindent Combining the two conclusions of Theorem \ref{T:Re and Im local}, we arrive at the following corollary. 
\begin{cor}\label{C:bddness} With same notations and hypotheses as in Theorem \ref{T:Re and Im local} if, 
furthermore, $\tilde\epsilon>0$ is sufficiently small
 then 
\begin{equation}\label{E:local-relative-bdd}
\Bigl|\,\frac{\mathtt S[\mathrm{Re}K_\Gamma ](\mathbf z)}{c^2(\mathbf z)} -\frac32 \Big|< \frac{\tilde\epsilon}{\kappa_0^2 - \tilde\epsilon}\,;\quad
\Bigl|\,\frac{\mathtt S[\mathrm{Im}K_\Gamma ](\mathbf z)}{c^2(\mathbf z)} +\frac12 \Big|< \frac{\tilde\epsilon}{\kappa_0^2 - \tilde\epsilon}
\end{equation}
for any three-tuple $\mathbf z$ of non-collinear points in $\Gamma(\tilde I)^3$ where $\tilde I = (x_0-\tilde\delta, x_0+\tilde\delta)$ is obtained by applying Theorem \ref{T:Re and Im local} to $\tilde\epsilon$.
\end{cor}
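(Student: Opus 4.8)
The plan is to derive Corollary \ref{C:bddness} as a purely algebraic consequence of the two estimates in Theorem \ref{T:Re and Im local}, with no further analysis of the kernel. First I would fix $\tilde\epsilon>0$ small enough that $\tilde\epsilon<\kappa_0^2$ (this is the meaning of ``sufficiently small''; it guarantees the denominators below are positive), apply Theorem \ref{T:Re and Im local} with $\epsilon=\tilde\epsilon$ to produce $\tilde\delta>0$ and the interval $\tilde I=(x_0-\tilde\delta,x_0+\tilde\delta)$, and from now on let $\mathbf z$ be any three-tuple of \emph{non-collinear} points in $\Gamma(\tilde I)^3$. Non-collinearity is what makes $c(\mathbf z)\neq 0$, so that division by $c^2(\mathbf z)$ is legitimate.

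Next I would record the lower bound on $c^2(\mathbf z)$: by \eqref{E:meng-bdd} applied with $\epsilon=\tilde\epsilon$ we have $c^2(\mathbf z)=\kappa_0^2+r(\mathbf z)$ with $|r(\mathbf z)|<\tilde\epsilon$, hence
\begin{equation*}
c^2(\mathbf z)\ \ge\ \kappa_0^2-\tilde\epsilon\ >\ 0.
\end{equation*}
Then I would take the first estimate in \eqref{E:Re and Im local}, namely $\bigl|\mathtt S[\mathrm{Re}K_\Gamma](\mathbf z)-\tfrac32 c^2(\mathbf z)\bigr|<\tilde\epsilon$, and divide through by the positive quantity $c^2(\mathbf z)$:
\begin{equation*}
\Bigl|\,\frac{\mathtt S[\mathrm{Re}K_\Gamma](\mathbf z)}{c^2(\mathbf z)}-\frac32\,\Bigr|\ <\ \frac{\tilde\epsilon}{c^2(\mathbf z)}\ \le\ \frac{\tilde\epsilon}{\kappa_0^2-\tilde\epsilon},
\end{equation*}
which is exactly the first inequality in \eqref{E:local-relative-bdd}. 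The second inequality follows in the identical way from the estimate $\bigl|\mathtt S[\mathrm{Im}K_\Gamma](\mathbf z)+\tfrac12 c^2(\mathbf z)\bigr|<\tilde\epsilon$, dividing by $c^2(\mathbf z)$ and using the same lower bound $c^2(\mathbf z)\ge\kappa_0^2-\tilde\epsilon$.

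There is essentially no obstacle here; the only point requiring care is the bookkeeping of quantifiers — one must apply Theorem \ref{T:Re and Im local} a single time with the choice $\epsilon=\tilde\epsilon$ so that the \emph{same} $\tilde\delta$ (equivalently the same interval $\tilde I$) governs both the Menger-curvature estimate \eqref{E:meng-bdd} and the two symmetrization estimates \eqref{E:Re and Im local}, and one must invoke non-collinearity of $\mathbf z$ to ensure $c^2(\mathbf z)>0$ before dividing. Everything else is the elementary observation that $|a-b|<\eta$ together with $b\ge m>0$ yields $|a/b-1\cdot(\text{coefficient})|<\eta/m$ after normalizing. I would therefore present this as a short two-line deduction rather than a separate argument.
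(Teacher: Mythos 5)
Your proof is correct and is precisely what the paper has in mind: the paper offers no separate argument beyond the remark ``Combining the two conclusions of Theorem \ref{T:Re and Im local}, we arrive at the following corollary,'' and your write-out (apply the theorem once with $\epsilon=\tilde\epsilon$, use \eqref{E:meng-bdd} to get $c^2(\mathbf z)>\kappa_0^2-\tilde\epsilon>0$, then divide the two inequalities of \eqref{E:Re and Im local} by $c^2(\mathbf z)$) is exactly the intended deduction. You also correctly identified that ``sufficiently small'' means $\tilde\epsilon<\kappa_0^2$ and that non-collinearity is what licenses the division.
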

{\bf{Remarks:}}
 \begin{enumerate}[(i)] \item 
Theorem \ref{T:Re and Im local} gives that 
$\mathtt S[\mathrm{Re}K_\Gamma ](\mathbf z)$ satisfies the positivity condition 
{\tt(ii)}
 when $\mathbf z$ is taken in $ \Gamma (I)^3$, but more is true:
the proof will show that $\mathtt S[\mathrm{Re}K_\Gamma ]$
 manifests a phenomenon of ``local superpositivity'' in the sense that for any
  $\mathbf z\in \Gamma (I)^3$,  $\mathtt S[\mathrm{Re}K_\Gamma ](\mathbf z)$ is given by the sum of three positive terms, each
 comparable to $\frac{1}{2} \kappa_0^{2}$. 
 On the other hand
$\mathtt S[\mathrm{Im}K_\Gamma ]$ is strictly negative on $\Gamma (I)^3$,
  in stark contrast with the situation for $\mathtt S[\mathrm{Im}(K_0 )]$ (that is, when $\Gamma$ is a horizontal line).
\vskip0.1in
\item This leads to the following remarkable fact. Recall  that for {\em{any}} kernel function $K(z, w)$
and for {\em any} 
three-tuple of distinct points $\mathbf z$, the quantity $\mathtt S[K](\mathbf z)$ admits the basic split
\begin{equation}\label{E:Symm-real-im}
\mathtt S[K](\mathbf z)\,=\,  \mathtt S[\mathrm{Re} K] (\mathbf z)+ \mathtt S[\mathrm{Im}K](\mathbf z),
\end{equation}
see e.g., \cite[(2.5)]{LPg}.
\vskip0.1in
\noindent However, whereas the split of $\mathtt S[K_0](\mathbf z)$ is perfectly balanced between the real and imaginary parts of $K_0$ i.e., 
\[ \mathtt S[\mathrm{Re} K_0](\mathbf z)\, =\, \frac{1}{2}\mathtt S[K_0](\mathbf z)\, =\, \mathtt S[\mathrm{Im} K_0](\mathbf z)\] 
see \eqref{E:sym-id-0pr} -- \eqref{E:sym-id-0pi},
the split for $\mathtt S[\,K_\Gamma ](\mathbf z)$ with 
$\mathbf z\in \Gamma(I)^3$ is roughly speaking $3/2$ and $-1/2$, respectively, i.e.
\[ \mathtt S[\mathrm{Re}K_\Gamma ](\mathbf z) \approx \frac{3}{2} \mathtt S[\,K_\Gamma ](\mathbf z)
\ \ \text{ and } \ \ \mathtt S[\mathrm{Im}K_\Gamma ](\mathbf z) \approx -\frac{1}{2} \mathtt S[\,K_\Gamma ](\mathbf z)\, .\]
\vskip0.1in
\item Results analogous to Theorem \ref{T:Re and Im local} continue to hold if $A''(x_0)=0$ but some higher order derivative of $A$
is non-vanishing at $x_0$. The proof modifies with very little changes and we have chosen to omit it here.
\vskip0.1in
\item Corollary \ref{C:bddness}  says that both
$\mathtt S[\mathrm{Re}K_\Gamma ]$ and $\mathtt S[\mathrm{Im}K_\Gamma ]$ satisfy the relative boundedness condition {\tt(i)}
in $\Gamma (\tilde I)^3$ (and in fact are themselves locally
 bounded, but see Theorem \ref{T:global-bound-of Re-for-Lip-A'} below for a stronger statement). 
\vskip0.1in
\item In general, the inclusion $\tilde I\subset J$ in Corollary \ref{C:bddness} is strict, and
in the absence of the localization: $\mathbf z\in \Gamma(\tilde I)^3$ there are no definitive results pertaining to condition 
\eqref{E:local-relative-bdd}.
 In Section \ref{SS:failed rel bdd} we give an example of a relatively compact, smooth curve $\Gamma$; a point $z_0\in\Gamma$ with non-zero signed curvature, and  three-tuples 
 $\{\mathbf z_\lambda = (z^1_\lambda; z^2_\lambda; z^3_\lambda)\}_\lambda$ of distinct points on $\Gamma$ such that 
 $z^2_\lambda\to z_0$ but
  $\mathtt S[\mathrm{Re}K_\Gamma] (\mathbf z_\lambda)/c^2(\mathbf z_\lambda)$ and
 $\mathtt S[\mathrm{Im}K_\Gamma] (\mathbf z_\lambda)/c^2(\mathbf z_\lambda)$
 are unbounded.
 \end{enumerate} 
\vskip0.1in
\subsubsection{Qualitative global estimates on $\Gamma$} We further consider two settings:
 \vskip0.05in
  $\bullet$ {\em Curves of class $C^{1,1}$.} This means that $A$ is once differentiable with Lipschitz continuous derivative. We show that
   each of
  $c^2(\mathbf z)$; $\big|\mathtt S\,[ \mathrm{Re} K_\Gamma](\mathbf z)\big|$ and $\big|\mathtt S\,[ \mathrm{Im} K_\Gamma](\mathbf z)\big|$ admits a global upper bound valid for any three-tuple $\mathbf z$ of distinct points on $\Gamma$. Since $c(\mathbf z)$ can vanish, this result does not imply relative boundedness. See Theorem \ref{T:global-bound-of Re-for-Lip-A'} and Corollary \ref{C:global-bound-of Im-for-Lip-A'} below.
  \vskip0.04in
  $\bullet$ {\em Curves of class $C^2$ with fixed concavity}. We prove global non-negativity of 
  $\mathtt S\,[ \mathrm{Re} K_\Gamma](\mathbf z)$ and provide examples to show that there are no definitive results pertaining to the global signature of $\mathtt S\,[ \mathrm{Im} K_\Gamma](\mathbf z)$.
  See Theorem \ref{T:positivity-on-curves-with-fixed-curvature} and the examples in section \ref{SS:Im-pos} below. 

\label{SS: restricted-global-for-special curves}
\begin{thm}\label{T:global-bound-of Re-for-Lip-A'}
Suppose that $A$
is of class $C^{1,1}$
i.e., there exists a constant $M>0$ such that
\begin{equation}\label{A'-Lipschitz}
|A'(x)-A'(y)|\leq M |x-y|\, \quad \mathrm{all}\ x, y\in J.
\end{equation}
Then 
we have that
$$
\big|\, \mathtt S[\mathrm{Re}K_\Gamma ](\mathbf z) \,\big| \leq \frac32 M^2
$$
for any three-tuple $\mathbf z$ of distinct points on $\Gamma$.
\end{thm}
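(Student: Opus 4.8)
The plan is to bound $\mathtt S[\mathrm{Re}K_\Gamma](\mathbf z)$ without exploiting any cancellation in the permutation sum \eqref{E:K-sym}, simply via a uniform pointwise bound on each factor. So I would first establish the claim
\[
\bigl|\,\mathrm{Re}K_\Gamma(z_j,z_k)\,\bigr|\ \le\ \frac{M}{2}\qquad\text{for every pair of distinct points } z_j,z_k\in\Gamma .
\]
Granting this, the theorem is immediate: by \eqref{E:K-sym}, $\mathtt S[\mathrm{Re}K_\Gamma](\mathbf z)$ is a sum of $|S_3|=6$ terms, each a product $\mathrm{Re}K_\Gamma(z_{\sigma(1)},z_{\sigma(2)})\,\mathrm{Re}K_\Gamma(z_{\sigma(1)},z_{\sigma(3)})$ of two such factors, so the triangle inequality yields $|\mathtt S[\mathrm{Re}K_\Gamma](\mathbf z)|\le 6\,(M/2)^2=\tfrac32 M^2$. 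This route uses neither the non-collinearity of $\mathbf z$ nor the Menger curvature — which, as flagged in the introduction, is exactly why it cannot give relative boundedness, in contrast with Melnikov's \eqref{E:sym-id-0pr}.

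\textbf{The pointwise bound.} To prove the displayed inequality I would pass to the graph parametrization, writing $z_j=x_j+iA(x_j)$ and $z_k=x_k+iA(x_k)$ with $x_j\ne x_k$ (distinct points on a graph have distinct abscissae), start from \eqref{E:def-restricted} in the normalization in which the factor $1/2\pi$ is suppressed, rationalize the denominator and take the real part. This produces
\[
\mathrm{Re}K_\Gamma(z_j,z_k)\ =\ \frac{A'(x_j)(x_j-x_k)-\bigl(A(x_j)-A(x_k)\bigr)}{s(x_j)\,|z_j-z_k|^2},\qquad s(x)=\sqrt{1+(A'(x))^2},
\]
which one recognizes — up to the suppressed constant — as the classical double layer potential kernel $\langle\mathbf n(w),w-z\rangle/|w-z|^2$, precisely as advertised for $\mathrm{Re}K_\Gamma$ in the introduction. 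The crucial feature is that the numerator is a genuinely second-order quantity in $x_j-x_k$: since $C^{1,1}$ makes $A'$ absolutely continuous, the fundamental theorem of calculus gives
\[
A'(x_j)(x_j-x_k)-\bigl(A(x_j)-A(x_k)\bigr)\ =\ \int_{x_k}^{x_j}\bigl(A'(x_j)-A'(t)\bigr)\,dt .
\]
Applying the Lipschitz estimate \eqref{A'-Lipschitz} under the integral, $|A'(x_j)-A'(t)|\le M|x_j-t|$, and using $s(x_j)\ge1$ together with $(x_j-x_k)^2\le (x_j-x_k)^2+(A(x_j)-A(x_k))^2=|z_j-z_k|^2$, the last two displays combine to give $|\mathrm{Re}K_\Gamma(z_j,z_k)|\le \tfrac{M}{2}$. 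Incidentally, the same formula gives $\mathrm{Re}K_\Gamma(z_j,z_k)\to\tfrac12\kappa_0$ when all three points collapse to a $C^2$-point of curvature $\kappa_0$, which explains why $\tfrac32=6\cdot(\tfrac12)^2$ resurfaces both here and in the local statement of Theorem \ref{T:Re and Im local}.

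\textbf{Where the difficulty lies.} There is no deep obstacle: the argument is bookkeeping once one carries out the exact cancellation that rewrites the numerator of $\mathrm{Re}K_\Gamma$ as $\int_{x_k}^{x_j}(A'(x_j)-A'(t))\,dt$ — this is the structural feature distinguishing $\mathrm{Re}K_\Gamma$, a bounded kernel on $C^{1,1}$ curves, from $K_\Gamma$ itself. The only point calling for a little care is that under the bare $C^{1,1}$ hypothesis $A''$ exists only almost everywhere, so one should avoid writing a second-order Taylor remainder involving $A''$ and instead apply \eqref{A'-Lipschitz} to $A'$ directly, as above; the constant $\tfrac32$ is then not claimed sharp, merely clean.
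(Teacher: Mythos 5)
Your proof is correct and takes essentially the same route as the paper: both rewrite the numerator $A'(x_j)(x_j-x_k)-(A(x_j)-A(x_k))$ as $\int_{x_k}^{x_j}(A'(x_j)-A'(t))\,dt$, apply the Lipschitz bound, and use $(x_j-x_k)^2\le|z_j-z_k|^2$ together with $s(x_j)\ge 1$. The only organizational difference is that you isolate the pointwise bound $|\mathrm{Re}K_\Gamma|\le M/2$ and multiply across the six permutation terms, while the paper bounds the three collapsed products in the representation of Lemma \ref{L:sym-id-curves} directly — the underlying inequalities are identical.
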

\vskip0.1in
\noindent The order of magnitude of the Lipschitz constant for $A'$
(that is, the quantity $M$ in \eqref{A'-Lipschitz})
 is optimal, as indicated by the following
\begin{lem}\label{L: example-optimal}
Let $A(x)= x^3$.
Then
there are \ \ $0<\delta_0 = \delta_0(A)$ and\ \  $0<c_0<1$, $c_0=c_0(A)$ such that 
$$
\mathtt S[\mathrm{Re}K_\Gamma ](\mathbf z)\geq c_0\,M_\epsilon^2
$$
for $\mathbf z= (-\epsilon\alpha - i \epsilon^3\alpha^3,\, 0,\, \epsilon\beta + i \epsilon^3\beta^3)$,
 for any $\alpha, \beta\in [1/2, 1]$ and any $0<\epsilon<\min\{1, \delta_0\}$. Here $M_\epsilon$ is the Lipschitz constant for the restriction of  $A'(x)$ to the interval
$(-\epsilon, \epsilon)$.
\end{lem}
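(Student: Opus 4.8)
The plan is to compute $\mathtt S[\mathrm{Re}K_\Gamma](\mathbf z)$ for the given family of triples explicitly, exploiting a cancellation that is special to $A(x)=x^3$. Since $A'(x)=3x^2$ has derivative $A''(x)=6x$, the Lipschitz constant of $A'$ on $(-\epsilon,\epsilon)$ is $M_\epsilon=\sup_{|x|<\epsilon}|A''(x)|=6\epsilon$, so $M_\epsilon^2=36\epsilon^2$; it therefore suffices to prove a lower bound of the form $\mathtt S[\mathrm{Re}K_\Gamma](\mathbf z)\ge\tfrac34\epsilon^2$, uniformly in $\alpha,\beta\in[\tfrac12,1]$ and in $\epsilon\in(0,\delta_0)$ for a suitable $\delta_0=\delta_0(A)$, since this gives the assertion with $c_0=\tfrac1{48}$.

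First I would rationalize $\mathrm{Re}K_\Gamma$: for a graph $\Gamma$ as in \eqref{E:curve} one has, with $w=x+iA(x)$, $z=y+iA(y)$ and $s(x)=\sqrt{1+A'(x)^2}$,
\[ \mathrm{Re}K_\Gamma(w,z)=\frac{1}{2\pi}\cdot\frac{A'(x)(x-y)-(A(x)-A(y))}{s(x)\,|w-z|^2} \]
(the double-layer form, cf. \cite[(1.14)]{LPg}). For $A(x)=x^3$ the numerator factors as $(x-y)^2(2x+y)$ and $|w-z|^2=(x-y)^2(1+(x^2+xy+y^2)^2)$, so the singular factor $(x-y)^2$ cancels and, dropping the harmless $1/2\pi$,
\[ \mathrm{Re}K_\Gamma(x+ix^3,\,y+iy^3)=\frac{2x+y}{\sqrt{1+9x^4}\,(1+(x^2+xy+y^2)^2)}; \]
in particular $\mathrm{Re}K_\Gamma$ is bounded on this ($C^\infty$) curve, and restricting to the diagonal recovers a multiple of the signed curvature of $\Gamma$, a useful consistency check. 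Since $\mathrm{Re}K_\Gamma$ is real-valued, the six terms of \eqref{E:K-sym} pair off, giving
\[ \mathtt S[\mathrm{Re}K_\Gamma](\mathbf z)=2\,(R_{12}R_{13}+R_{21}R_{23}+R_{31}R_{32}),\qquad R_{jk}:=\mathrm{Re}K_\Gamma(z_j,z_k), \]
where $z_1=-\epsilon\alpha-i\epsilon^3\alpha^3$, $z_2=0$, $z_3=\epsilon\beta+i\epsilon^3\beta^3$ are pairwise distinct and carry curve-parameters $x_1=-\epsilon\alpha$, $x_2=0$, $x_3=\epsilon\beta$.

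Next I would substitute these parameter values into the closed form and expand in powers of $\epsilon$. Each $R_{jk}$ is a linear-in-$\epsilon$ leading term multiplied by a factor $1+O(\epsilon^4)$, all the correcting denominators being of the shape $1+O(\epsilon^4)$ with constants depending only on $A$, so the errors are uniform over $\alpha,\beta\in[\tfrac12,1]$ and $\epsilon\in(0,1)$. Multiplying out the three products and collecting the $\epsilon^2$ coefficient gives
\[ \mathtt S[\mathrm{Re}K_\Gamma](\mathbf z)=2\,(4\alpha^2+4\beta^2-5\alpha\beta)\,\epsilon^2+O(\epsilon^6). \]
The identity $4\alpha^2+4\beta^2-5\alpha\beta=4(\alpha-\beta)^2+3\alpha\beta$ shows the bracket is $\ge 3\alpha\beta\ge\tfrac34$ on $[\tfrac12,1]^2$; choosing $\delta_0=\delta_0(A)$ small enough that the $O(\epsilon^6)$ error is $<\tfrac34\epsilon^2$ for $\epsilon<\delta_0$ then yields $\mathtt S[\mathrm{Re}K_\Gamma](\mathbf z)\ge\tfrac32\epsilon^2-\tfrac34\epsilon^2=\tfrac34\epsilon^2=\tfrac1{48}M_\epsilon^2$ for all $0<\epsilon<\min\{1,\delta_0\}$, which is the lemma.

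The computation presents no serious obstacle; the two points that require care are (i) carrying out correctly the cancellation of $(x-y)^2$ in $\mathrm{Re}K_\Gamma$, on which the closed form and hence the whole argument rests, and (ii) checking that the $O(\epsilon^6)$ remainder (and the $O(\epsilon^4)$ factors feeding it) are genuinely uniform over $(\alpha,\beta)\in[\tfrac12,1]^2$, which is immediate once the $R_{jk}$ are written out, since every correction is bounded by a fixed polynomial in $\epsilon$ with $A$-dependent coefficients.
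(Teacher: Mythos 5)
Your proof is correct and follows essentially the same route as the paper's: both extract the leading coefficient $4(\alpha-\beta)^2+3\alpha\beta$ in the $\epsilon^2$-expansion of $\mathtt S[\mathrm{Re}K_\Gamma](\mathbf z)$ at these three-tuples, identify $M_\epsilon=6\epsilon$, and then take $\delta_0$ small to absorb the higher-order remainder. Your uniform $O(\epsilon^6)$ bookkeeping is actually a bit cleaner than the paper's, which writes $\mathtt S[\mathrm{Re}K_\Gamma]$ as a single rational expression, bounds numerator and denominator separately, and runs a case analysis on the signs of the coefficients of the higher-order polynomial terms.
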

\noindent Note that Lemma \ref{L: example-optimal} also shows that Theorem \ref{T:Re and Im local}  gives a sufficient, but not necessary condition for the local positivity of $\mathtt S[\mathrm{Re}K_\Gamma ]$.
\begin{lem}\label{L:global-bound-of Menger-for-Lip-A'}
With same hypotheses as Theorem \ref{T:global-bound-of Re-for-Lip-A'}, we have that
$$
c^2(\mathbf z)\leq 8 M^2
$$
for any three-tuple $\mathbf z$ of distinct points on $\Gamma$.
\end{lem}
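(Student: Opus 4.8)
The plan is to reduce the bound $c^2(\mathbf z)\le 8M^2$ to a purely geometric estimate on the circumradius of an inscribed triangle, exploiting the elementary fact that a graph of class $C^{1,1}$ cannot bend too sharply. Fix a three-tuple $\mathbf z=\{z_1,z_2,z_3\}$ of distinct points of $\Gamma$, write $z_k=x_k+iA(x_k)$, and relabel so that $x_1<x_2<x_3$ (the $x_k$ are distinct since $\Gamma$ is a graph over $J$). Set $h_1:=x_2-x_1>0$ and $h_2:=x_3-x_2>0$. If the three points are collinear then $c(\mathbf z)=0$ and there is nothing to prove, so assume they are not.

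First I would use the classical identity
\[
c(\mathbf z)=\frac{2\,\mathrm{dist}(z_2,\ell_{13})}{|z_1-z_2|\,|z_2-z_3|},
\]
where $\ell_{13}$ is the line through $z_1$ and $z_3$; it follows from $c(\mathbf z)=4\,\mathrm{Area}(\mathbf z)/(|z_1-z_2|\,|z_2-z_3|\,|z_1-z_3|)$ together with $\mathrm{Area}(\mathbf z)=\tfrac12|z_1-z_3|\,\mathrm{dist}(z_2,\ell_{13})$. Next, I would bound $\mathrm{dist}(z_2,\ell_{13})$ from above by the \emph{vertical} deviation of $z_2$ from the chord $\ell_{13}$, namely by $|\Delta_v|$ with
\[
\Delta_v:=A(x_2)-\Bigl(A(x_1)+\tfrac{A(x_3)-A(x_1)}{x_3-x_1}\,(x_2-x_1)\Bigr),
\]
since the perpendicular distance from a point to a line never exceeds the distance measured along a fixed direction.

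The heart of the argument is the estimate $|\Delta_v|\le\tfrac{M}{2}h_1h_2$. Writing $x_2=(1-t)x_1+tx_3$ with $t=h_1/(h_1+h_2)$, one has $\Delta_v=-(1-t)[A(x_1)-A(x_2)]-t[A(x_3)-A(x_2)]$. Expanding each bracket to first order about $x_2$ via the integral remainder $A(x_i)-A(x_2)-A'(x_2)(x_i-x_2)=\int_{x_2}^{x_i}(A'(s)-A'(x_2))\,ds$ and invoking \eqref{A'-Lipschitz} gives $|A(x_i)-A(x_2)-A'(x_2)(x_i-x_2)|\le\tfrac{M}{2}(x_i-x_2)^2$; moreover the first-order contributions cancel identically because $(1-t)(x_1-x_2)+t(x_3-x_2)=0$. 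Hence $|\Delta_v|\le(1-t)\tfrac{M}{2}h_1^2+t\,\tfrac{M}{2}h_2^2=\tfrac{M}{2}h_1h_2$. Since $|z_1-z_2|\ge|x_1-x_2|=h_1$ and $|z_2-z_3|\ge h_2$, combining the above yields $c(\mathbf z)\le 2|\Delta_v|/(h_1h_2)\le M$, so $c^2(\mathbf z)\le M^2\le 8M^2$.

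I do not anticipate a genuine obstacle; the only points needing care are the passage from the perpendicular distance to the vertical deviation (immediate) and the use of Taylor's remainder under the mere $C^{1,1}$ hypothesis rather than $C^2$, which is handled by the integral-remainder formula above (valid whenever $A'$ is absolutely continuous, in particular Lipschitz). This route in fact delivers the sharper conclusion $c^2(\mathbf z)\le M^2$; the constant $8$ in the statement is far from optimal, and only the finiteness of the bound is used later.
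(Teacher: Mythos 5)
Your proof is correct, and it reaches the conclusion by a genuinely cleaner route than the paper's, with a sharper constant. Both arguments ultimately rest on the area formula $c(\mathbf z)=4\,\mathrm{Area}(\Delta(\mathbf z))/(\ell_1\ell_2\ell_3)$, but they organize the estimate differently. The paper's proof invokes its own Lemma~\ref{L:menger on curves} for an explicit representation of $c^2(\mathbf z)$ as a ratio, bounds the denominator $\ell_u^2\ell_x^2\ell_v^2$ below by $(v-x)^2(x-u)^2(v-u)^2$, and rewrites the numerator via integral Taylor remainders anchored at the two endpoints $u$ and $v$; this leaves a cross-term $(A'(v)-A'(u))(x-u)(v-x)$ that must be estimated separately and is responsible for the loss of constant. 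You instead pass to the equivalent distance-to-chord identity $c=2\,\mathrm{dist}(z_2,\ell_{13})/(|z_1-z_2||z_2-z_3|)$, dominate the perpendicular distance by the vertical deviation $\Delta_v$, and then expand around the \emph{middle} abscissa $x_2$: because $x_2=(1-t)x_1+tx_3$, the first-order contributions cancel identically, and the Lipschitz bound on $A'$ gives $|\Delta_v|\le\tfrac{M}{2}h_1h_2$ directly, whence $c^2(\mathbf z)\le M^2$. (In fact the quantity $A(x)(v-u)-A(u)(v-x)-A(v)(x-u)$ appearing in the paper's numerator is exactly $(v-u)\Delta_v$, so the two proofs are estimating the same object; centering the Taylor expansion at $x_2$ is what removes the cross-term and tightens the constant.) Your bound $M^2$ is asymptotically sharp, as seen by taking three nearby points on a circular arc of radius $1/M$. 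Your use of the integral remainder is the right tool under the mere $C^{1,1}$ hypothesis, and the collinear case and the distinctness of the $x_k$ are handled correctly.
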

\vskip0.1in
\begin{cor}\label{C:global-bound-of Im-for-Lip-A'}
With same hypotheses as Theorem \ref{T:global-bound-of Re-for-Lip-A'}, we have that
$$
\big|\,\mathtt S[\mathrm{Im}K_\Gamma ](\mathbf z)\big|\leq \left(\!8 + \frac32\right)\!M^2 \quad
$$
for any three-tuple $\mathbf z$ of distinct points on $\Gamma$.
\end{cor}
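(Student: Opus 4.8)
The plan is to obtain this bound directly from the three preceding results, with essentially no new computation. The starting point is the elementary split \eqref{E:Symm-real-im}, valid for any kernel and any three-tuple of distinct points: $\mathtt S[K](\mathbf z) = \mathtt S[\mathrm{Re}K](\mathbf z) + \mathtt S[\mathrm{Im}K](\mathbf z)$. Applying this with $K = K_\Gamma$ and solving for the imaginary part gives
\[ \mathtt S[\mathrm{Im}K_\Gamma ](\mathbf z) \,=\, \mathtt S[K_\Gamma ](\mathbf z) - \mathtt S[\mathrm{Re}K_\Gamma ](\mathbf z). \]

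Next I would invoke the observation recorded in the introduction that $\mathtt S[K_\Gamma ](\mathbf z) = \mathtt S[K_0](\mathbf z)$ for every rectifiable $\Gamma$ and every three-tuple of distinct points. This is immediate from \eqref{E:K-sym}: in each summand the two unit tangent factors $\mathbf{t}_\Gamma(z_{\sigma(1)})$ enter as a conjugate pair and contribute only $|\mathbf{t}_\Gamma(z_{\sigma(1)})|^2 = 1$, so (with the constant prefactor dropped as agreed after \eqref{E:defCauchyop}) the summand collapses to the corresponding summand of $\mathtt S[K_0](\mathbf z)$. Combining this with Melnikov's identity \eqref{E:sym-id-0p}, namely $\mathtt S[K_0](\mathbf z) = c^2(\mathbf z)$, yields $\mathtt S[K_\Gamma ](\mathbf z) = c^2(\mathbf z)$ and hence
\[ \mathtt S[\mathrm{Im}K_\Gamma ](\mathbf z) \,=\, c^2(\mathbf z) - \mathtt S[\mathrm{Re}K_\Gamma ](\mathbf z). \]

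Finally, the triangle inequality together with the two global estimates already proved — Lemma \ref{L:global-bound-of Menger-for-Lip-A'}, which gives $c^2(\mathbf z) \le 8M^2$, and Theorem \ref{T:global-bound-of Re-for-Lip-A'}, which gives $|\mathtt S[\mathrm{Re}K_\Gamma ](\mathbf z)| \le \tfrac32 M^2$ — yields
\[ \big|\,\mathtt S[\mathrm{Im}K_\Gamma ](\mathbf z)\,\big| \,\le\, c^2(\mathbf z) + \big|\,\mathtt S[\mathrm{Re}K_\Gamma ](\mathbf z)\,\big| \,\le\, 8M^2 + \tfrac32 M^2 \,=\, \Bigl(8 + \tfrac32\Bigr)M^2, \]
which is the assertion.

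There is no genuine obstacle here: the corollary is a bookkeeping consequence of the split \eqref{E:Symm-real-im}, the cancellation of the tangent factors, and the two substantive bounds established earlier. The only point that requires a moment's care is consistency of normalization — one must use the version of $K_\Gamma$ for which $\mathtt S[K_\Gamma ] = \mathtt S[K_0]$ (i.e. with the prefactor $1/2\pi$ suppressed, as in the estimates of Theorem \ref{T:global-bound-of Re-for-Lip-A'} and Lemma \ref{L:global-bound-of Menger-for-Lip-A'}), so that the constants $8$ and $3/2$ add as stated.
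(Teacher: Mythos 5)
Your argument is correct and is exactly the bookkeeping the paper intends (the statement is labeled a corollary and no separate proof is printed): combine the split \eqref{E:Symm-real-im} with $\mathtt S[K_\Gamma](\mathbf z)=c^2(\mathbf z)$, then apply Lemma \ref{L:global-bound-of Menger-for-Lip-A'} and Theorem \ref{T:global-bound-of Re-for-Lip-A'} and the triangle inequality. Your closing remark about normalization is the right point to flag, and it is consistent with the paper's convention of dropping the $1/2\pi$ prefactor, under which the unit-modulus tangent factors cancel and $\mathtt S[K_\Gamma]=\mathtt S[K_0]=c^2$.
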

\begin{thm}\label{T:positivity-on-curves-with-fixed-curvature}
Suppose that $A$
is of class $C^2$, and that $A''$ does not change sign on $J$ e.g.,
$A''(x)\geq 0$ for all $x\in J$ (alt. $A''(x)\leq 0$ for all $x\in J$).
Then 
\begin{equation}
\mathtt S[\mathrm{Re}K_\Gamma ](\mathbf z) \geq 0
\end{equation}
 for any three-tuple $\mathbf z$ of distinct points on $\Gamma$.
\end{thm}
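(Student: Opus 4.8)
The plan is to exploit the product structure of the symmetrization \eqref{E:K-sym} together with the convexity of $A$, so that positivity comes out term by term. First I would record the real part of \eqref{E:def-restricted} in a convenient form: writing $w_a=x_a+iA(x_a)$ for the three points and $s(x)=\sqrt{1+(A'(x))^2}$, a direct separation of real and imaginary parts gives
\[
\mathrm{Re}K_\Gamma(w_a,w_b)\;=\;\frac{1}{2\pi}\cdot\frac{A'(x_a)(x_a-x_b)-\bigl(A(x_a)-A(x_b)\bigr)}{s(x_a)\,|w_a-w_b|^2}.
\]
I would abbreviate the numerator by $N_{ab}:=A'(x_a)(x_a-x_b)-\bigl(A(x_a)-A(x_b)\bigr)$, noting that $N_{ab}$ is \emph{not} symmetric in $a,b$: its first index is the vertex at which the tangent line to $\Gamma$ is being evaluated.

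Next I would reorganize the sum defining $\mathtt S[\mathrm{Re}K_\Gamma](\mathbf z)$. Since $\mathrm{Re}K_\Gamma$ is real-valued, the two kernel factors in each summand of \eqref{E:K-sym} share the same first argument $z_{\sigma(1)}$; grouping the six permutations by the common value $\sigma(1)$ yields
\[
\mathtt S[\mathrm{Re}K_\Gamma](\mathbf z)\;=\;2\!\!\sum_{\{a,b,c\}=\{1,2,3\}}\!\!\mathrm{Re}K_\Gamma(w_a,w_b)\,\mathrm{Re}K_\Gamma(w_a,w_c)\;=\;\frac{1}{2\pi^2}\sum_{a=1}^{3}\frac{N_{ab}\,N_{ac}}{s(x_a)^2\,|w_a-w_b|^2\,|w_a-w_c|^2},
\]
where in each term $\{b,c\}$ denotes the complement of $\{a\}$ in $\{1,2,3\}$. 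This is precisely the ``apex decomposition'' underlying the superpositivity remark after Theorem \ref{T:Re and Im local}: for each choice of vertex $w_a$ one multiplies the two ``spokes'' of $\mathrm{Re}K_\Gamma$ issuing from $w_a$, and no cross terms joining distinct apices occur.

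The decisive step is then the observation that the hypothesis on $A''$ fixes the sign of every $N_{ab}$. If $A''\ge0$ on $J$, then $A$ is convex and its graph lies above each tangent line, i.e.\ $A(x_b)\ge A(x_a)+A'(x_a)(x_b-x_a)$, which rearranges exactly to $N_{ab}\ge0$ for all $a\ne b$; symmetrically, $A''\le0$ forces $N_{ab}\le0$ for all $a\ne b$. In either case $N_{ab}N_{ac}\ge0$ for every apex $a$, so each of the three summands in the displayed formula is non-negative and the conclusion $\mathtt S[\mathrm{Re}K_\Gamma](\mathbf z)\ge0$ follows; in the degenerate situation where the three points are collinear (hence lie on an affine piece of the graph) all $N_{ab}$ vanish and the expression is $0$.

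I do not anticipate a genuine obstacle here: the only thing to get right is the bookkeeping in the apex decomposition, which is exactly what turns the \emph{global} convexity of $A$ into a \emph{term-by-term} positivity statement rather than a mere statement about the aggregate. It is worth emphasizing that, unlike Melnikov's setting, one should not attempt to prove this by producing an exact identity for $\mathtt S[\mathrm{Re}K_\Gamma]$ in terms of $c^2(\mathbf z)$ — Theorem \ref{T:Re and Im local} already shows that the ratio genuinely varies — so the positivity must be read off from the sign of the numerators $N_{ab}$, which is where the convexity hypothesis enters.
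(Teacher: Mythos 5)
Your proof is correct and takes essentially the same route as the paper: use the apex decomposition of Lemma \ref{L:sym-id-curves} and show each of the three summands is non-negative. The one place you improve on the paper's exposition is the sign argument for the numerators $N_{ab}$: you read $N_{ab}\ge 0$ directly from the ``graph lies above its tangent lines'' characterization of convexity, which needs no labeling $x_1<x_2<x_3$ and no casework, whereas the paper rewrites each $N_{ab}$ as $\int_{x_k}^{x_j}\bigl(A'(x_j)-A'(t)\bigr)\,dt$ and argues sign-by-sign after fixing an ordering of the abscissae; your route is a touch cleaner but the underlying idea is identical.
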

{\bf{Remarks: }} 
\begin{enumerate}[(i)]
\item In view of conclusion \eqref{E:Re and Im local} in Theorem \ref{T:Re and Im local}, it makes sense to ask whether the inequality: $\mathtt S[\mathrm{Im}K_\Gamma ](\mathbf z)\leq 0$ can hold for any three-tuple $\mathbf z$ of distinct points on a curve satisfying the hypotheses of Theorem \ref{T:positivity-on-curves-with-fixed-curvature}: in section \ref{SS:Im-pos} below we answer this question in the negative by showing that the parabola
 $\Gamma =\{x+i\, x^2/2, x\in\mathbb R\}$ admits three-tuples $\mathbf z$ of distinct points
   such that $\mathtt S[\mathrm{Im}K_\Gamma ](\mathbf z)> 0$ .
\vskip0.1in
\item The assumption of fixed concavity in Theorem \ref{T:positivity-on-curves-with-fixed-curvature} is necessary: in section \ref{SS: fail-global-pos} we show that the cubic $\Gamma =\{x+i x^3, \ x\in\mathbb R\}$ admits  three-tuples $\mathbf z$ of distinct points
 for which $\mathtt S[\mathrm{Re}K_\Gamma ](\mathbf z)< 0$.
\vskip0.1in
\item 
While our global results hold for three-tuples 
 that lie on the curve
 $\Gamma$, the original Melnikov miracles \eqref{E:sym-id-0pr} and \eqref{E:sym-id-0pi}
  are in fact universally global in that they
 are valid for three-tuples
  that may lie {\em anywhere} in $\mathbb C$. 
It is thus meaningful
 to ask whether analogues of Theorem \ref{T:global-bound-of Re-for-Lip-A'}, Corollary \ref{C:global-bound-of Im-for-Lip-A'}, or Theorem \ref{T:positivity-on-curves-with-fixed-curvature}
can be stated that hold for
arbitrary three-tuples 
 in $\mathbb C$. To this end we consider the following family of kernels $\{K_h\}_h$ 
\begin{equation}\label{E:Kh-def}
K_h(w, z) := \frac{e^{i h(w)}}{w-z}
\end{equation}
 parametrized by globally defined, continuous functions $h: \mathbb C \rightarrow \mathbb R$. The family $\{K_h\}_h$ is especially interesting to us because for any $C^1$-smooth curve $\Gamma$ 
  the kernel $K_\Gamma$ is realized as $j^*K_h$ for at least one such $h$. To see this, 
  note that the domain $\Omega$ determined by  $\Gamma$ as in \eqref{E:curve} and \eqref{E:domain},
    is contained in the complement of a ray, thereby granting the existence of a continuous branch of the logarithm of the function
    $$
  x\mapsto  
  \frac{A'(x)-i}{(1+(A'(x))^2)^{1/2}},\quad x\in J.
    $$ 
    Applying such
     logarithm
     gives
      a continuous function: $ \Gamma \to \mathbb R$ which we call $\varphi$. Any extension of $\varphi$  to a continuous\footnote{for instance,  extending $A$ to a $C^1$-function: $\tilde{A}: \mathbb R\to\mathbb R$ and then letting $h(x+iy):= \varphi(x+i\tilde{A}(x))$
gives a continuous function $h:\mathbb C\to\mathbb R$ that is constant along each vertical line.} 
 $h: \mathbb C\to \mathbb R$
   produces a kernel  $K_h$ in the family \eqref{E:Kh-def} whose restriction to $\Gamma$ agrees with $K_\Gamma$.

It turns out
 that $\mathtt S[\mathrm{Re}K_h ]$
  and
  $\mathtt S[\mathrm{Im}K_h]$ satisfy no universally global phenomena
 for any continuous $h:\mathbb C\to \mathbb R$, not even for $h$ chosen so that  $j^*K_h = K_\Gamma$ where
$\Gamma$ satisfies the stronger hypotheses of class $C^{1,1}$ (Theorem \ref{T:global-bound-of Re-for-Lip-A'}) or class $C^2$ (Theorem \ref{T:positivity-on-curves-with-fixed-curvature}): this is proved with
 techniques that
 are similar in spirit to recent work of Chousionis-Pratt \cite{CP}
 and Chunaev \cite{C}.
 The precise statements are given in section \ref{S:appendix},
 see Theorem \ref{T:L-infty} - Theorem \ref{T:L-infty-H}.

\end{enumerate} 
\vskip0.1in

\noindent {\bf An open problem.} {\em  Does the stronger assumption: $$A''(x)\geq c>0\quad \text{for all }\ x\in J\quad  \text{(alt.}\ A''(x)\leq c<0\ \text{for all}\ x\in J)$$ give that
\begin{equation}\label{E:Re-pos-str}
\mathtt S[\mathrm{Re}K_\Gamma ](\mathbf z) \geq \alpha\, c^2(\mathbf z)
\end{equation}
for some $\alpha = \alpha(\Gamma)>0$ and for all three-tuples $\mathbf z$ of distinct points on $\Gamma$?}
\vskip0.1in
\noindent This statement seems much harder to prove than Theorem \ref{T:positivity-on-curves-with-fixed-curvature} (whose proof is remarkably simple). Note that an answer in the positive would shed some light on the signature of $\mathtt S[\mathrm{Im}K_\Gamma ](\mathbf z)$ because it would imply that
$$
\mathtt S[\mathrm{Im}K_\Gamma ](\mathbf z) \leq (1-\alpha) c^2(\mathbf z)
$$
for all three-tuples $\mathbf z$ of distinct points on $\Gamma$.  In the example of the parabola:  $\Gamma =\{x+i x^2/2, x\in\mathbb R\}$, an elementary but non-trivial calculation\footnote{we are grateful to M. Putinar, E. Wegert and A. Weideman for assisting with these computations.} gives that \eqref{E:Re-pos-str} is true with $\alpha = 1/2$; the general case remains unanswered.
\subsection{Conclusion} The extensive literature on this subject 
indicates that most kernels do not satisfy basic estimates such as  {\tt(i)} and {\tt (ii)} globally in $\mathbb C$:  the families $\{\mathrm{Re}\Kh\}_h$ and $\{\mathrm{Im}\Kh\}_h$ are no exception.
 On the other hand,  given any rectifiable curve $\Gamma$ with regularity prescribed  as in  our main results, it turns out that certain members in
  $\{\mathrm{Re}\Kh\}_h$ (those for which $j^*\Kh = K_\Gamma$) do satisfy $\Gamma$-restricted versions of  {\tt(i)} and {\tt (ii)}, whereas their counterparts in $\{\mathrm{Im}\Kh\}_h$ will satisfy $\Gamma$-restricted versions of {\tt(i)} though not necessarily of {\tt(ii)}. 
  
 As mentioned earlier, symmetrization techniques have so far been used primarily to study $L^2$-regularity of Calder\`on-Zygmund operators. One would like to know whether effective curvature methods can be developed to prove certain finer properties  of the Kerzman-Stein operator (such as compactness in $L^p(\Gamma, \sigma)$ \cite{Semmes1983}) 
  and of various Kerzman-Stein-like operators that are known to bear
  upon 
   the $L^p$-regularity problem for
  the Cauchy-Szeg\Humlaut{o} projection, the Bergman projection and other holomorphic singular integral operators: the $\Gamma$-restricted estimates obtained here are a first step in this direction; we plan to pursue 
  the subsequent steps elsewhere.
  \vskip0.1in
  
  \noindent {\bf One last remark.} {\em What if in place of the symmetrized form \eqref{E:K-sym}  one had considered the following variant:
  \begin{equation} 
\tilde{\mathtt S}\,[K](\mathbf z) :=   \sum\limits_{\sigma\in S_3}
  K(z_{\sigma(2)},\, z_{\sigma(1)})\, 
  \overline{K(z_{\sigma(3)},\, z_{\sigma(1)})} ,
\end{equation} 
whose choice is also legitimate  because $\tilde{\mathtt S}\,[K_0](\mathbf z) = {\mathtt S}\,[K_0](\mathbf z)$?
}
\vskip0.1in
  Setting $ K^*(w, z):= \overline{K(z, w)}$ it is easy to see that 
$ \tilde{\mathtt S}\,[K](\mathbf z) \ =\ \mathtt S\,[K^*](\mathbf z)$.
In section \ref{S:last} we prove failure of the basic estimate {\tt (i)} for the family $\{K_h^*\}_h$ globally in $\mathbb C$, see Proposition \ref{P:symm-id-K-star}  and Theorem \ref{T:L-infty-H}. The analysis in the $\Gamma$-restricted setting will be the object of forthcoming work.
\subsection{Organization of this paper} In section \ref{S:Prelimns} we collect a few auxiliary facts needed to prove the main results, whose proofs are given in section \ref{S:proofs-main}. All examples pertaining to the sharpness of the main results are 
detailed
in section \ref{S:Examples}. Finally, section \ref{S:appendix} is an appendix where we 
 collect all the relevant statements for the family of kernels \eqref{E:Kh-def}.

 \subsection{Acknowledgements.} The authors were supported 
by awards no.\ DMS-1503612  and DMS-1901978 from the National Science Foundation USA; and
a Discovery grant from the National Science and Engineering Research
Council of Canada. Part of this work took place {\em (a)} at the Mathematical Sciences Research Institute in Berkeley, California, where the authors were in residence during a thematic program in the spring of 2017; {\em (b)} at the Park City Mathematics Institute in July 2018, during the thematic program in harmonic analysis, and {\em (c)} at the Isaac Newton Institute for Mathematical Sciences, where the first-named author was in residence in Fall 2019 during the program {\em Complex Analysis: Theory and Applications} (EPSRC grant no. EP/R014604/1). We thank the institutes and the programs organizers for their generous support and hospitality.  Last but not least, we are very grateful to the reviewer for helpful feedback and for pointing out relevant references.

  \section{Preliminaries}\label{S:Prelimns}
\noindent We begin by recording representations
for $\mathtt S\,[\text{Re} K_\Gamma ](\mathbf z)$, $\mathtt S[\,\mathrm{Im}K_\Gamma ](\mathbf z)$ and $c(\mathbf z)$ that hold when $\mathbf z$ is a three-tuple of distinct points on $\Gamma$, that is for  $\mathbf z =(z_1, z_2, z_3)$ with $z_j=x_j+iA(x_j)\in \Gamma$, $j=1, 2, 3$ and distinct $x_1, x_2, x_3$. 
\begin{lem} \label{L:sym-id-curves} $A$
be of class $C^1$. Then
the symmetrized forms of $\mathrm{Re}K_\Gamma  (\mathbf z )$ and $\mathrm{Im}K_\Gamma  (\mathbf z)$ admit the following representations at  any three-tuple $\mathbf z$ of distinct points on $\Gamma$:
\begin{align}\label{E:RE-graph}
&
\begin{aligned} 
\mathtt S[\,\mathrm{Re}K_\Gamma \,](\mathbf z) &= 2 \sum\limits_{\stackrel{j=1}{k<l}}^3\, \frac{1}{s^2(x_j)\ell^2_k\,\ell^2_l}\,
\Big[A'(x_j)(x_j-x_k)-\big(A(x_j)-A(x_k)\big)\!\Bigr] \times \\ 
& \hskip1.4in \Bigl[A'(x_j)(x_j-x_l)-\big(A(x_j)-A(x_l)\big)\!\Bigr]\, ;
\end{aligned} \\
& \label{E:IM-graph}
\begin{aligned} 
\mathtt S[\,\mathrm{Im}K_\Gamma \,](\mathbf z)\! &= 2  \sum\limits_{\stackrel{j=1}{k<l}}^3
\frac{1}{s^2(x_j)\ell^2_k\,\ell^2_l}
\Bigl[(x_k-x_j)  + A'(x_j)\big(A(x_k)-A(x_j)\big)\!\Bigr] \times \\ 
&\hskip1.4in \Bigl[(x_l-x_j)  + A'(x_j)\big(A(x_l)-A(x_j)\big)\!\Bigr].
\end{aligned} 
\end{align} 
\end{lem}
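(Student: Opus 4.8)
The plan is a completely elementary computation starting from the parametric formula \eqref{E:def-restricted}; no analytic input is needed beyond the algebra of complex numbers. First I would record a reduction of the symmetrization \eqref{E:K-sym} valid for a \emph{real-valued} kernel $P(w,z)$. Grouping the six permutations of $S_3$ according to the common value $z_j=z_{\sigma(1)}$ of the first slot, the two permutations with $\sigma(1)=j$ produce identical summands (the conjugate is inert since $P$ is real, and the product is symmetric under swapping the last two slots), so
\[
\mathtt S[P](\mathbf z)\;=\;2\sum_{\substack{j=1\\ k<l}}^{3} P(z_j,z_k)\,P(z_j,z_l),
\]
the inner sum running over the pair $\{k,l\}=\{1,2,3\}\setminus\{j\}$. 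Applying this with $P=\mathrm{Re}K_\Gamma$ and then with $P=\mathrm{Im}K_\Gamma$ reduces the lemma to writing $\mathrm{Re}K_\Gamma(z_j,z_k)$ and $\mathrm{Im}K_\Gamma(z_j,z_k)$ in closed form.

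For that I would rationalize the denominator in \eqref{E:def-restricted}. With $w=x_j+iA(x_j)$, $z=x_k+iA(x_k)$, set $P_{jk}=x_j-x_k$, $Q_{jk}=A(x_j)-A(x_k)$, so $|z_j-z_k|^2=P_{jk}^2+Q_{jk}^2$ and, suppressing the constant $1/2\pi$ as agreed in the Introduction,
\[
K_\Gamma(z_j,z_k)=\frac{(A'(x_j)-i)\,(P_{jk}-iQ_{jk})}{s(x_j)\,|z_j-z_k|^2}.
\]
Expanding $(A'(x_j)-i)(P_{jk}-iQ_{jk})=\big(A'(x_j)P_{jk}-Q_{jk}\big)-i\big(A'(x_j)Q_{jk}+P_{jk}\big)$ and separating real and imaginary parts gives
\[
\mathrm{Re}K_\Gamma(z_j,z_k)=\frac{A'(x_j)(x_j-x_k)-\big(A(x_j)-A(x_k)\big)}{s(x_j)\,|z_j-z_k|^2},
\]
\[
\mathrm{Im}K_\Gamma(z_j,z_k)=\frac{(x_k-x_j)+A'(x_j)\big(A(x_k)-A(x_j)\big)}{s(x_j)\,|z_j-z_k|^2}.
\]
Substituting these two identities into the reduced formula above, the factor $s(x_j)$ occurs once in each of the two factors of a summand, which assembles the $s^2(x_j)$ in the denominator; and $|z_j-z_k|^2|z_j-z_l|^2$ is exactly $\ell_l^2\ell_k^2=\ell_k^2\ell_l^2$ once one uses the convention that $\ell_m$ is the length of the side of the triangle $z_1z_2z_3$ opposite the vertex $z_m$ (so the side joining $z_j$ and $z_k$, being opposite $z_l$, has length $\ell_l$). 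This yields \eqref{E:RE-graph}; the computation for $\mathrm{Im}K_\Gamma$ is word-for-word the same and gives \eqref{E:IM-graph}.

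No step is a genuine obstacle — the lemma is bookkeeping — but there are two places that need care. First, the sign in $\mathrm{Im}K_\Gamma$: the term $-iQ_{jk}$ produced by rationalizing $1/(P_{jk}+iQ_{jk})$ must be combined correctly with the $-i$ in the numerator $A'(x_j)-i$, and one should verify the resulting numerator is $(x_k-x_j)+A'(x_j)(A(x_k)-A(x_j))$ and not its negative. Second, the matching of index labels: the two sides emanating from $z_j$ have lengths $\ell_l$ and $\ell_k$ (\emph{not} $\ell_j,\ell_k$), so one must check that the product of distances assembles precisely as $\ell_k^2\ell_l^2$ in \eqref{E:RE-graph}--\eqref{E:IM-graph}. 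Finally, since $A$ is of class $C^1$ we have $s(x)\ge 1>0$, so for any three-tuple of \emph{distinct} points on $\Gamma$ all the denominators above are nonzero and every manipulation is legitimate.
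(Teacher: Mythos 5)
Your proposal is correct and follows essentially the same route as the paper's own proof: first the reduction of $\mathtt S[P]$ for real-valued $P$ to $2\sum_{j,\,k<l}P(z_j,z_k)P(z_j,z_l)$ (which the paper cites to \cite{LPg} rather than re-deriving, as you do), then rationalizing \eqref{E:def-restricted} to read off $\mathrm{Re}K_\Gamma$ and $\mathrm{Im}K_\Gamma$, and finally substituting. The computations and the $\ell_k^2\ell_l^2$ bookkeeping all check out.
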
 
\noindent As in \cite{LPg}, here we have set $\{j, l, k\} = \{1, 2, 3\}$ and $l, k\in \{1, 2, 3\}\setminus \{j\}$, and we have adopted the shorthand
$$
s^2(x_j) = 1 + (A'(x_j))^2\,; \quad 
\ell^2_j = (x_l-x_k)^2 + (A(x_l)-A(x_k))^2\,.
$$
\begin{proof}
 First we recall  that if $H(w, z)$ is {\em real-valued}, then 
 \begin{equation}\label{E:Symm-simpl3}
  \mathtt S\,[H](\mathbf z)=
  2\sum\limits_{\stackrel{j=1}{k<l}}^3
  H(z_j,\, z_k)\, 
  H(z_j,\, z_\ell)\, ,
  \end{equation} 
see e.g., \cite{LPg}. Next we note that \eqref{E:def-restricted} gives
\begin{align*} 
\mbox{Re} K_\Gamma (w, z)\, &=\, \frac{A'(x)(x-y)- \big(A(x)-A(y)\big)}{s(x)|w-z|^2}\, \text{ and } \\ 
\mbox{Im} K_\Gamma (w, z)\, &=\, \frac{y-x + A'(x)\big(A(y)-A(x)\big)}{s(x)|w-z|^2}
\end{align*} 
for distinct points $w=x+iA(x), z =y+iA(y)$ in $\Gamma$.
The conclusion now follows by plugging these expressions in
\eqref{E:Symm-simpl3}.
\end{proof}
\vskip0.1in
\begin{lem} \label{L:menger on curves} Let
 $A$
be continuous, and let $\mathbf z = \big(u +iA(u); x+iA(x); v+iA(v)\big)$ be any three-tuple  of distinct points on $\Gamma$.
Then the Menger curvature of $\mathbf z$ admits the following representation:
\begin{equation}\label{E:menger on curves}
c^2(\mathbf z) = 
\frac{4\,\left[A(u)(x-v) + A(x)(v-u) + A(v)(u-x)\right]^2}
{\ell^2_u\, \ell^2_x\, \ell^2_v}.
\end{equation}
\end{lem}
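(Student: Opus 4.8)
The plan is to deduce \eqref{E:menger on curves} from the classical identification of the Menger curvature of a triple with (four times) the area of the triangle it spans, divided by the product of the side lengths. Concretely, I would start from the fact that for any three points $P_1,P_2,P_3\in\mathbb C$,
\[
c(P_1,P_2,P_3)=\frac{4\,\mathcal A}{|P_1-P_2|\,|P_2-P_3|\,|P_3-P_1|},
\]
where $\mathcal A$ denotes the (unsigned) area of the triangle with those vertices: for non-collinear points this is just the circumradius formula $R=\tfrac{abc}{4\mathcal A}$ combined with $c=1/R$, while for collinear points both sides vanish, which matches the convention for $c(\mathbf z)$ recalled in the Introduction. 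So the lemma reduces to computing $\mathcal A$ and the three side lengths for the vertices $P_1=u+iA(u)$, $P_2=x+iA(x)$, $P_3=v+iA(v)$, and then squaring.

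Next I would compute $\mathcal A$ by the shoelace (cross-product) formula applied to $(u,A(u)),(x,A(x)),(v,A(v))\in\mathbb R^2$; grouping the resulting terms by the values $A(u)$, $A(x)$, $A(v)$ gives
\[
\mathcal A=\tfrac12\,\bigl|\,A(u)(x-v)+A(x)(v-u)+A(v)(u-x)\,\bigr|.
\]
On the other hand, in the shorthand recalled just below Lemma \ref{L:sym-id-curves}, $\ell_u^2$, $\ell_x^2$, $\ell_v^2$ are exactly the squared lengths of the sides $P_2P_3$, $P_1P_3$, $P_1P_2$, so $|P_1-P_2|\,|P_2-P_3|\,|P_3-P_1|=\ell_u\,\ell_x\,\ell_v$. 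Substituting both facts into the displayed formula for $c$ and squaring yields
\[
c^2(\mathbf z)=\frac{16\,\mathcal A^2}{\ell_u^2\,\ell_x^2\,\ell_v^2}=\frac{4\,[\,A(u)(x-v)+A(x)(v-u)+A(v)(u-x)\,]^2}{\ell_u^2\,\ell_x^2\,\ell_v^2},
\]
which is \eqref{E:menger on curves}. As an alternative route one could instead start from Melnikov's identity $\mathtt S[K_0](\mathbf z)=c^2(\mathbf z)$ and expand the right-hand side using \eqref{E:ker-0} evaluated at the points $x_j+iA(x_j)$, but the geometric argument above is shorter and cleaner.

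I do not expect a genuine obstacle here; the proof is a short computation. The only points that require a moment's care are the bookkeeping of which $\ell_\bullet$ sits opposite which vertex (so that the product of the three side lengths really equals $\ell_u\ell_x\ell_v$ regardless of how the triple $\mathbf z$ is ordered) and the observation that the area-based formula for $c$, hence the stated identity, stays valid in the degenerate collinear case, where it correctly returns $c(\mathbf z)=0$.
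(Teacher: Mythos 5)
Your proof is correct and takes essentially the same route as the paper: both rely on the classical identity $c(\mathbf z)=4\,\mathrm{Area}(\Delta(\mathbf z))/(\ell_u\ell_x\ell_v)$ (equation \eqref{E:Marea-1}) and then compute the triangle's area in terms of the graph coordinates. The only cosmetic difference is that you invoke the shoelace formula directly, whereas the paper first reduces (by translation and a WLOG ordering) to the case $A(u),A(x),A(v)>0$, $u<x<v$, and then reads off the area as a signed combination of parallelogram areas — your version is slightly more streamlined but encodes the same computation.
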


\noindent As before, here we have adopted the shorthand: $\ell^2_u = (v-x)^2 + (A(v)-A(x))^2$, etc.

\vskip0.1in

\begin{proof}
If the three distinct points are collinear then the conclusion is immediate because each side of \eqref{E:menger on curves} is easily seen to be equal to zero. Suppose next that the three points are not collinear: 
by the invariance of the numerator of \eqref{E:menger on curves} under the permutations of $\{u, x, v\}$ we may assume without loss of generality that $u<x<v$. Then there are two cases to consider, depending on whether the point $(x, A(x))$ lies below or above the line segment joining  $(u, A(u))$ and $(v, A(v))$. In either case we may assume without loss of generality, that
$$
A(u)>0;\ A(x)>0;\ A(v)>0.
$$
(This is because Menger curvature is invariant under translations, and the above condition is achieved by a translation along the vertical axis.) The desired conclusion then follows by employing the well-known formula
\cite[(3.1)]{T}
   \begin{equation}\label{E:Marea-1}
 c(\mathbf z)\,=\, \frac{4\,\text{Area}(\Delta (\mathbf z))}{\ell_a\ell_b\ell_c},
 \end{equation}
and by expressing the area of the triangle  $\Delta (\mathbf z)$ as an appropriate linear combination of
areas of parallelograms whose vertices belong to the set 
$$\{ (u, 0); (x, 0); (v, 0); (u, A(u)); (x, A(x)); (v, A(v))\}.$$
\end{proof}
\noindent Next we provide an elementary lemma that rules out the possibility of collinearity for three-tuples $\mathbf z$ of distinct points on $\Gamma$ in the vicinity of points with non-zero signed curvature.

\begin{lem}\label{L:Rolle} 
Let $A$ be
 of class $C^2$. 
Then any three-tuple $\mathbf z$ of distinct points on $\Gamma$
  that are in the vicinity of a point $z_0\in \Gamma$ whose curvature $\kappa_0$ is non-zero, are 
  non-collinear.

\end{lem}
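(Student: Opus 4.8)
The plan is to argue by contradiction using Rolle's theorem on the function that measures deviation of $\Gamma$ from a chord. Suppose $\mathbf z = (z_1, z_2, z_3)$ with $z_j = x_j + iA(x_j)$ and, after relabeling, $x_1 < x_2 < x_3$, and suppose the three points are collinear, lying on a line $y = mx + q$. Consider the auxiliary function $g(x) := A(x) - (mx + q)$ on $J$; it is of class $C^2$ because $A$ is, and by hypothesis $g(x_1) = g(x_2) = g(x_3) = 0$. First I would apply Rolle's theorem twice: there exist $\xi_1 \in (x_1, x_2)$ and $\xi_2 \in (x_2, x_3)$ with $g'(\xi_1) = g'(\xi_2) = 0$, hence a further application of Rolle gives $\eta \in (\xi_1, \xi_2) \subset (x_1, x_3)$ with $g''(\eta) = 0$, i.e. $A''(\eta) = 0$.

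The next step is to quantify what "in the vicinity of $z_0$" must mean so as to reach a contradiction. Write $z_0 = x_0 + iA(x_0)$; by hypothesis the signed curvature $\kappa_0 = A''(x_0)/s(x_0)^3$ is non-zero, so $A''(x_0) \neq 0$. Since $A''$ is continuous, there is an open interval $I_0 \ni x_0$ on which $A''$ does not vanish (indeed keeps the sign of $A''(x_0)$). Now I would make the phrase "in the vicinity of $z_0$" precise: it means that the three points lie in $\Gamma(I_0) = \{x + iA(x) : x \in I_0\}$. Then $x_1, x_2, x_3 \in I_0$, so the point $\eta$ produced above also lies in $I_0$, contradicting $A''(\eta) = 0$. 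Hence the three points cannot be collinear.

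There is essentially no serious obstacle here; the only point requiring a little care is the bookkeeping that the interval $I_0$ on which $A''$ has no zero can be taken as the quantitative meaning of "vicinity", and that this is consistent with (and in fact implied by) the choice of $I$ in Theorem \ref{T:Re and Im local}, where $\delta$ is chosen small enough that $A''$ is nonzero on $(x_0 - \delta, x_0 + \delta)$. I would state the lemma's conclusion with this understanding, so that it can be invoked directly in the proof of Theorem \ref{T:Re and Im local} to guarantee $c(\mathbf z) \neq 0$ and hence legitimize dividing by $c^2(\mathbf z)$ in Corollary \ref{C:bddness}. If one prefers not to fix $I_0$ in advance, the same argument shows the contrapositive: any collinear triple of distinct points on $\Gamma$ forces a zero of $A''$ strictly between the extreme abscissae, so a triple contained in any interval free of zeros of $A''$ is automatically non-collinear.
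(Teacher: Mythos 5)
Your proof is correct and takes essentially the same approach as the paper: both derive a zero of $A''$ between the extreme abscissae of a collinear triple via Rolle's theorem and then contradict the non-vanishing of $A''$ near $x_0$. Your framing is slightly more streamlined (applying Rolle directly to $g(x) = A(x) - (mx+q)$, and fixing an interval $I_0$ on which $A''$ has no zero, rather than the paper's sequential contradiction via equal slopes and the mean value theorem), but the core idea is identical.
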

\begin{proof}  We need to show that for any $x_0\in J$ such that $A''(x_0)\neq 0$ there is $\delta>0$ with the property that  
for any $u, x, v\in I := (x_0-\delta, x_0+\delta)$
the points
$ u +iA(u); x+iA(x); v + iA(v)$ are not collinear.
Suppose, by contradiction, that there are $x_0\in J$ and $u_n<v_n<w_n\to x_0$ such that the points $P_n:=(u_n, A(u_n)); Q_n:= (v_n, A(v_n))$ and $R_n:= (w_n, A(w_n))$ are collinear. Then the slopes of the line segments joining any two such points must be equal, giving us
$$
\frac{A(v_n)-A(u_n)}{v_n-u_n}\, =\, \frac{A(w_n)-A(v_n)}{w_n-v_n}\quad \text{for all}\ n.
$$
By the mean value theorem it follows that there are $x_n$ and $y_n$ with $u_n < x_n< v_n< y_n< w_n$ and such that
$$
A'(x_n) = A'(y_n)\quad \text{for all}\ n.
$$
Applying Rolle's theorem to $f(x):= A'(x)$ we conclude that for each $n$ there is  $z_n$ with $x_n< z_n< y_n$ and such that 
$$
A''(z_n)=0\quad \text{for all}\ n,
$$
leading us to a contradiction since $A''(z_n)\to A''(x_0)\neq 0$.
\end{proof}
\noindent {\bf Remark.} The same strategy of proof also gives the following global version of Lemma \ref{L:Rolle}:
{\em If $A$: 
is of class $C^2$ and $A''(x)\neq 0$ for all $x\in J$, then any three-tuple 
of distinct points on $\Gamma$ 
 are non-collinear.}

\vskip0.1in

\noindent In closing this section we detail a few lemmas that help
 to keep track of the effect of the assumed regularity of $A(x)$ in the proofs of our main results. 
\begin{lem}\label{L:1}
Let $A$ be
of class $C^2$.
Then for any $x_0\in J$ and $\epsilon >0$ there is $\delta>0$ such that
\begin{equation}\label{E:L1}
|A(v)-A(u)-A'(x_0)(v-u)|\leq\epsilon|v-u|\quad
\end{equation}
whenever $u, v\in I = (x_0-\delta, x_0+\delta)$.
\end{lem}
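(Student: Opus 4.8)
The statement is the standard first-order Taylor approximation with a uniform (local) error bound, so the plan is simply to exploit the continuity of $A'$ near $x_0$. First I would write, for $u,v\in I$ with (say) $u\le v$, the exact identity
\[
A(v)-A(u)-A'(x_0)(v-u)=\int_u^v\bigl(A'(t)-A'(x_0)\bigr)\,dt,
\]
which is valid since $A\in C^1$. Taking absolute values inside the integral gives
\[
\bigl|A(v)-A(u)-A'(x_0)(v-u)\bigr|\le\Bigl(\sup_{t\in I}\,|A'(t)-A'(x_0)|\Bigr)\,|v-u|.
\]
So it suffices to choose $\delta>0$ small enough that $|A'(t)-A'(x_0)|\le\epsilon$ for all $t\in I=(x_0-\delta,x_0+\delta)$, which is possible because $A'$ is continuous at $x_0$ (indeed $A\in C^2$, so $A'$ is even Lipschitz on a neighborhood, but mere continuity is all that is needed here). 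With that choice of $\delta$ the displayed estimate \eqref{E:L1} follows for all $u,v\in I$; the case $v<u$ is symmetric.

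There is essentially no obstacle: the only minor point to be careful about is that $\delta$ must be taken small enough that $I\subset J$ as well, so that $A'$ is defined on all of $I$ — this is harmless since $J$ is open and $x_0\in J$. One could alternatively invoke the mean value theorem to write the left-hand side as $|A'(\xi)-A'(x_0)|\,|v-u|$ for some $\xi$ between $u$ and $v$, and then use continuity of $A'$ at $x_0$ directly; the integral form above is marginally cleaner because it avoids quantifying over the intermediate point. Either way the proof is a two-line consequence of the continuity of the first derivative, and no use of $A''$ beyond ensuring $A\in C^1$ is actually required (the hypothesis $A\in C^2$ is presumably stated only for uniformity with the surrounding lemmas).
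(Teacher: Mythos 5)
Your proof is correct, and it takes a cleaner route than the paper's. The paper invokes the second-order Taylor expansion at $x_0$ with integral remainder $R_1(y)=\int_{x_0}^y(y-t)A''(t)\,dt$, rewrites the left-hand side of \eqref{E:L1} as $R_1(v)-R_1(u)$, then controls this difference by splitting, integrating by parts, and applying the mean value theorem, finally choosing $\delta\le\min\{\delta_1,\epsilon/(2\|A''\|_\infty)\}$. You instead use the exact first-order remainder identity $A(v)-A(u)-A'(x_0)(v-u)=\int_u^v\bigl(A'(t)-A'(x_0)\bigr)\,dt$ and bound the integrand by the modulus of continuity of $A'$ at $x_0$. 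Both are valid; yours is shorter, requires only $A\in C^1$, and avoids the second-derivative bookkeeping. The paper's version does hand you an explicit $\delta$ in terms of $\|A''\|_\infty$, which is in keeping with the quantitative flavor of the surrounding lemmas, but you could recover an essentially identical explicit choice (e.g.\ $\delta\le\epsilon/\|A''\|_{L^\infty}$) from your argument by replacing ``continuity of $A'$'' with the mean value estimate $|A'(t)-A'(x_0)|\le\|A''\|_\infty|t-x_0|$, so nothing is lost. Your closing observation that $C^2$ is not really needed here is correct.
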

\begin{proof}
Taylor's theorem grants the existence of $\delta_1>0$ (which we may take to be finite) such that
\begin{equation}\label{E:Taylor a}
A(v) - A(u) - A'(x_0)(v-u) = R_1(v) - R_1(u)
\end{equation}
for any $u, v\in I_{\delta_1}(x_0)$, where 
\begin{equation}\label{E:Tayl-rem}
R_1(y) =\int\limits_{x_0}^y (y-t) A''(t)\, dt. 
\end{equation}
 We claim that there is $0<\delta\leq \delta_1$ such that
\begin{equation}\label{E:Taylor b}
|R_1(v) - R_1(u)|\leq \epsilon |v-u|\quad \text{for any}\ \ u, v\in I_\delta(x_0).
\end{equation}
The claim is trivial when $u=v$  and we henceforth assume that $u<v$. 
It follows from \eqref{E:Tayl-rem} that
$$
R_1(v) - R_1(u) = (v-u)\int\limits_{x_0}^u A''(t)dt + \int\limits_u^v(v-t)A''(t)dt.
$$
Integrating the second integral by parts, and then applying the mean-value theorem give that
$$
R_1(v) - R_1(u) = (v-u)\int\limits_{x_0}^u A''(t)dt\ - A'(u)(v-u) + A'(\xi)(v-u)
$$
for some $\xi$ with $u<\xi<v$ and for any $u, v\in I_{\delta_1}(x_0)$. Now
  for
$$
\|A''\|_\infty := \|A''\|_{L^\infty (I_{\delta_1}(x_0))}\,
$$
we see that the above gives
$$
|R_1(v) - R_1(u)|\leq |v-u| \|A''\|_\infty 2\delta\leq \epsilon |v-u|
$$
as soon as we choose $0<\delta \leq \min\{\delta_1, \epsilon/(2\|A''\|_\infty)\}$.
\end{proof}
\vskip0.1in
\begin{cor}\label{C:CtoL1} 
Let $A$
be of class $C^3$
 and suppose that $A''(x_0)\neq 0$. Then for any $\epsilon >0$ there is $\delta>0$ such that
\begin{equation}\label{E:CtoL1}
A'(u)-A'(v) = A''(x_0)(u-v)\left(1+\mu (u, v)\right)\ \ \text{with}\ \ |\,\mu(u, v)|<\epsilon\, ,
\end{equation}
whenever $u, v\in I = (x_0-\delta, x_0+\delta)$.
\end{cor}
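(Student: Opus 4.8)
The plan is to reduce the statement about $A'(u)-A'(v)$ to Lemma \ref{L:1} applied to the function $A'$ in place of $A$. More precisely, since $A$ is of class $C^3$, the derivative $A'$ is of class $C^2$, so Lemma \ref{L:1} applies with $A'$ substituted for $A$ and with $A''(x_0)$ playing the role of the first derivative at $x_0$. This yields, for any $\epsilon_1>0$, a $\delta>0$ such that
\[
\bigl|A'(v)-A'(u)-A''(x_0)(v-u)\bigr|\leq \epsilon_1\,|v-u|
\]
whenever $u,v\in I=(x_0-\delta,x_0+\delta)$. Rewriting the left-hand side with $u$ and $v$ interchanged (which only changes the sign inside the absolute value) gives
\[
A'(u)-A'(v) = A''(x_0)(u-v) + \rho(u,v),\qquad |\rho(u,v)|\leq \epsilon_1\,|u-v|.
\]

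Next I would define $\mu(u,v)$ by factoring out $A''(x_0)(u-v)$: for $u\neq v$ set
\[
\mu(u,v) := \frac{A'(u)-A'(v)}{A''(x_0)(u-v)} - 1 = \frac{\rho(u,v)}{A''(x_0)(u-v)},
\]
and $\mu(u,v):=0$ when $u=v$ (in which case \eqref{E:CtoL1} holds trivially). The bound on $\rho$ then gives
\[
|\mu(u,v)| \leq \frac{\epsilon_1\,|u-v|}{|A''(x_0)|\,|u-v|} = \frac{\epsilon_1}{|A''(x_0)|}.
\]
Here is where the hypothesis $A''(x_0)\neq 0$ is essential: it is exactly what makes this division legitimate and the resulting bound finite. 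Given the target $\epsilon>0$, I would choose $\epsilon_1 := \epsilon\,|A''(x_0)|$ at the outset when invoking Lemma \ref{L:1}, so that $|\mu(u,v)|<\epsilon$ for all $u,v\in I$, which is the claimed conclusion \eqref{E:CtoL1}.

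There is no real obstacle here; the corollary is a routine bootstrapping of Lemma \ref{L:1} up one order of differentiability, and the only point requiring any care is the bookkeeping of constants (scaling $\epsilon$ by $|A''(x_0)|$ before applying the lemma) and the observation that the $u=v$ case must be handled separately since the factorization defining $\mu$ is then vacuous. The role of $A''(x_0)\neq 0$ is both to guarantee $\mu$ is well defined away from the diagonal and to control its size; without it the statement would be meaningless as written.
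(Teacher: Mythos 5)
Your proof is correct and follows the same route as the paper: apply Lemma~\ref{L:1} to $A'$ (which is $C^2$ since $A$ is $C^3$), then factor out $A''(x_0)(u-v)$ to define $\mu$. Your version is in fact slightly more careful than the paper's, since you explicitly rescale the tolerance ($\epsilon_1 = \epsilon\,|A''(x_0)|$) before invoking the lemma so that the final bound on $|\mu|$ is genuinely $\epsilon$ rather than $\epsilon/|A''(x_0)|$, and you treat the trivial diagonal case $u=v$ separately; the paper leaves both of these implicit.
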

\begin{proof}
Applying Lemma \ref{L:1} to the function $A'(x)$ (which is of class $C^2$) we obtain $\delta >0$ such that
$$
A'(u)-A'(v) = A''(x_0)(u-v) + \tilde R_1(v) - \tilde R_1(u)
$$
 where
$$
|\tilde R_1(v) - \tilde R_1(u)|\leq \epsilon |v-u|
$$
for any $u, v \in I_{\delta} (x_0)$; see \eqref{E:Taylor a} and \eqref{E:Taylor b}.
Thus the conclusion holds with 
$$
\mu(u, v) = \frac{\tilde R_1(v) - \tilde R_1(u)}{A''(x_0)(v-u)}.
$$
\end{proof}
\begin{lem}\label{L:2} 
Let $A$
be of class $C^3$
and suppose that $A''(x_0)\neq 0$. Then for any $\epsilon >0$ there is $\delta>0$ such that
\begin{equation}\label{E:L2}
\int\limits_u^v\!\!\big(A'(t)-A'(u)\big)dt = A''(x_0)\frac{(u-v)^2}{2}\big(1 + R(u, v)\big)\,
\text{with}\ |R(u, v)|<\epsilon
\end{equation}
whenever $u, v\in I = I_{\delta}(x_0) := (x_0-\delta, x_0+\delta)$.
\end{lem}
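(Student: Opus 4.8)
The plan is to obtain \eqref{E:L2} by integrating, over $t$ from $u$ to $v$, the first-order expansion of $A'$ around $x_0$ that is already furnished by Corollary \ref{C:CtoL1}. Since \eqref{E:CtoL1} is a two-point statement valid for all pairs in $I_\delta(x_0)$, integrating it in one of the two variables upgrades the linear-in-$(u-v)$ control there to the quadratic-in-$(u-v)$ control claimed here, with essentially the same error constant.

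In detail: given $\epsilon > 0$, first I would apply Corollary \ref{C:CtoL1} (with this same $\epsilon$, or with $\epsilon/2$ if one insists on a strict final inequality) to produce $\delta > 0$ such that
\[
A'(t) - A'(u) = A''(x_0)(t-u)\bigl(1 + \mu(t,u)\bigr), \qquad |\mu(t,u)| < \epsilon,
\]
for all $t, u \in I = I_\delta(x_0)$. The case $u = v$ being trivial, fix distinct $u, v \in I$; every $t$ between $u$ and $v$ lies in $I$ as well, so the expansion applies to the pair $(t,u)$ throughout the range of integration. Substituting and splitting,
\[
\int_u^v \bigl(A'(t)-A'(u)\bigr)\,dt = A''(x_0)\!\int_u^v (t-u)\,dt + A''(x_0)\!\int_u^v (t-u)\mu(t,u)\,dt,
\]
where the first term equals exactly $A''(x_0)(u-v)^2/2$, irrespective of the order of $u$ and $v$, and the second is an error term. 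Because $t-u$ keeps a fixed sign on the interval of integration, the error is bounded by $\epsilon\,\bigl|\int_u^v |t-u|\,dt\bigr| = \epsilon\,(u-v)^2/2$, so \eqref{E:L2} holds with $R(u,v) := \tfrac{2}{(u-v)^2}\int_u^v (t-u)\mu(t,u)\,dt$ and $|R(u,v)| < \epsilon$.

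There is really no hard part: the only points requiring care are that the segment between $u$ and $v$ stays inside $I$, so that Corollary \ref{C:CtoL1} can be invoked pointwise for every relevant $t$; that the sign of $v-u$ is handled symmetrically, which is automatic since only $(u-v)^2$ ever enters; and that $t \mapsto \mu(t,u)$ is measurable — immediate, since away from $t = u$ it is the ratio $\bigl(A'(t)-A'(u)-A''(x_0)(t-u)\bigr)/\bigl(A''(x_0)(t-u)\bigr)$ of continuous functions.
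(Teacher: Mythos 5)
Your proof is correct and follows essentially the same route as the paper's: both integrate the first-order expansion of $A'$ around $x_0$ over $t\in(u,v)$, with the error controlled by the Lipschitz-type bound from Lemma \ref{L:1} (you access it via Corollary \ref{C:CtoL1}, the paper invokes Lemma \ref{L:1} applied to $A'$ directly, but these are the same estimate repackaged). Your handling of the sign of $v-u$ and the measurability of $t\mapsto\mu(t,u)$ is careful and adds nothing the paper's argument lacks.
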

\begin{proof}
We may assume without loss of generality that $u\neq v$ and further, that 
$u<v$
Applying Lemma \ref{L:1} to the function $A'(x)$ (which is of class $C^2$) we obtain $\delta >0$ such that for any $t, u \in I_{\delta} (x_0)$ we have
\begin{equation}\label{E:pre-int}
A'(t)-A'(u) = A''(x_0)(t-u) + \tilde R_1(t) - \tilde R_1(u)
\end{equation}
and
\begin{equation}\label{E:rem-tilde}
|\tilde R_1(t) - \tilde R_1(u)|\leq \epsilon |t-u|\,;
\end{equation}
see \eqref{E:Taylor a} and \eqref{E:Taylor b}. Next we take 
$v\in I_{\delta}(x_0)$, and integrate both sides of \eqref{E:pre-int} over the sub-interval $(u, v)\subset I_\delta(x_0)$:
$$
\int\limits_u^v (A'(t)-A'(u))dt = 
\frac{A''(x_0)}{2}(v-u)^2 + 
\int\limits_u^v \big(\tilde R_1(t) - \tilde R_1(u)\big)dt\, .
$$
It follows from \eqref{E:rem-tilde} that
$$
\left|\,\int\limits_u^v \!\!\big(\tilde R_1(t) - \tilde R_1(u)\big)dt\right|\leq 
\epsilon \int\limits_u^v |t-u|\, dt = 
\frac{\epsilon}{2}(v-u)^2
$$
for any $u, v \in I_\delta(x_0)$. 
Thus the conclusion holds with 
$$
R(u, v) = \frac{\displaystyle{\int\limits_u^v\left(\tilde R_1(t) - \tilde R_1(u)\right)dt}}{A''(x_0)(v-u)^2}.
$$
\end{proof}

\begin{lem}\label{L:3} 
Let
$A$ be
of class $C^2$.
 Then for any $x_0\in J$ and any $\epsilon >0$ there is $\delta>0$ such that for any $u, v\in I = (x_0-\delta, x_0+\delta)$ we have
\begin{equation}\label{E:L3}
\frac{(v-u)^2}{[(v-u)^2 + (A(v)-A(u))^2]}\, =\, \frac{1}{s^2(x_0)}\left(1+\rho(u, v)\right)
\end{equation}
where\quad
$$
|\rho (u, v)|<\epsilon (1+2|A'(x_0)|)\, .
$$
\end{lem}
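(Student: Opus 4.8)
The plan is to treat the left-hand side as a small perturbation of the constant $1/s^2(x_0) = 1/(1+(A'(x_0))^2)$, controlled by the first-order Taylor estimate already supplied by Lemma \ref{L:1}. First I would write the denominator as
\[
(v-u)^2 + (A(v)-A(u))^2 = (v-u)^2\left[1 + \left(\frac{A(v)-A(u)}{v-u}\right)^2\right],
\]
so that the quotient in \eqref{E:L3} equals $\bigl(1 + q(u,v)^2\bigr)^{-1}$, where $q(u,v) := (A(v)-A(u))/(v-u)$ is the slope of the chord. The target identity is thus equivalent to
\[
\frac{1 + (A'(x_0))^2}{1 + q(u,v)^2} = 1 + \rho(u,v),
\]
i.e. $\rho(u,v) = \bigl[(A'(x_0))^2 - q(u,v)^2\bigr]/\bigl(1 + q(u,v)^2\bigr)$.

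Next I would bound $|A'(x_0) - q(u,v)|$. Applying Lemma \ref{L:1} at $x_0$ with a provisional tolerance $\epsilon_1$ gives $\delta_1 > 0$ such that $|A(v)-A(u) - A'(x_0)(v-u)| \le \epsilon_1 |v-u|$ for $u,v \in I_{\delta_1}(x_0)$, which upon dividing by $|v-u|$ yields $|q(u,v) - A'(x_0)| \le \epsilon_1$. Then
\[
\bigl|(A'(x_0))^2 - q(u,v)^2\bigr| = |A'(x_0) - q(u,v)|\cdot|A'(x_0) + q(u,v)| \le \epsilon_1\bigl(2|A'(x_0)| + \epsilon_1\bigr),
\]
and since the denominator $1 + q(u,v)^2 \ge 1$, we get $|\rho(u,v)| \le \epsilon_1(2|A'(x_0)| + \epsilon_1)$. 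Choosing $\epsilon_1 = \min\{\epsilon, 1\}$ (say) and then shrinking $\delta$ accordingly forces $|\rho(u,v)| \le \epsilon_1(2|A'(x_0)| + 1) \le \epsilon(1 + 2|A'(x_0)|)$, which is exactly the claimed bound; one simply takes $\delta = \delta_1$ for this choice of $\epsilon_1$.

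There is essentially no hard step here: the only mild subtlety is the bookkeeping in the last paragraph, where the tolerance fed into Lemma \ref{L:1} must be chosen (e.g. capped at $1$) so that the quadratic term $\epsilon_1^2$ in $|A'(x_0) + q|$ gets absorbed into the linear factor $(1 + 2|A'(x_0)|)$ rather than producing an $(A'(x_0))$-dependent constant of the wrong shape. Everything else — factoring the denominator, the difference-of-squares bound, and the trivial lower bound $1 + q^2 \ge 1$ — is routine. Note that class $C^2$ is used only through Lemma \ref{L:1}; no derivative of $A$ beyond $A''$ enters.
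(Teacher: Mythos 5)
Your proof is correct and relies on the same key ingredient, Lemma \ref{L:1}, used in the same way as the paper's argument. Your normalization to the chord slope $q(u,v)$, the difference-of-squares bound $|(A'(x_0))^2 - q^2| = |A'(x_0)-q|\,|A'(x_0)+q|$, the trivial lower bound $1+q^2 \geq 1$, and the explicit cap $\epsilon_1 = \min\{\epsilon,1\}$ together streamline the bookkeeping; the paper instead expands $(A(v)-A(u))^2$ via the Taylor remainder $R_1(v)-R_1(u)$ and tracks an intermediate quantity $S(u,v)$, so the two routes differ only in algebraic rearrangement, not in substance.
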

\begin{proof}
A straightforward application of Lemma \ref{L:1} gives $\delta>0$ such that
$$
\frac{(v-u)^2 + (A(v)-A(u))^2}{(v-u)^2} = s^2(x_0)\left(1+ \frac{S(u, v)}{(v-u)^2})\right)
$$
for any $u, v\in I_\delta(x_0)$, where (with the same notations as the proof of Lemma \ref{L:1})
$$
S(u, v) := (R_1(v)-R_1(u))^2 + 2A'(x_0)(v-u)(R_1(v)-R_1(u))
$$
has $|S(u, v)|\leq \epsilon (1+ 2|A'(x_0)|)(v-u)^2$. Taking reciprocals, we obtain the desired conclusion 
by chosing
$$
\rho(u, v) := -\, \frac{S(u, v)}{\displaystyle{1+\frac{S(u, v)}{(v-u)^2}}}.
$$

\end{proof}
\begin{lem}\label{L:4}
 Let $A$ be
 of class $C^2$.
 Then for any $x_0\in J$ and any $\epsilon >0$ there is $\delta>0$ such that
\begin{equation}\label{E:L4}
\frac{1}{s^2(x)} \ =\ \frac{1}{s^2(x_0)}\left(1+\beta (x)\right)
\text{with}\ \ |\beta (x)|<\epsilon\, ,
\end{equation}
whenever $x\in I = I_{\delta}(x_0) :=  (x_0-\delta, x_0+\delta)$.
\end{lem}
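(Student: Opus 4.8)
The plan is to deduce the claim directly from the continuity of the function $s^2(x) = 1 + (A'(x))^2$. Since $A$ is of class $C^2$, its derivative $A'$ is continuous on $J$, hence so is $s^2$; moreover $s^2(x) \geq 1 > 0$ for every $x \in J$, so the reciprocal $1/s^2$ is continuous and finite throughout $J$. In particular $1/s^2$ is continuous at the point $x_0$.

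First I would rewrite the target identity in the form that isolates $\beta$. Shrinking $\delta$ if necessary so that $I_\delta(x_0)\subseteq J$, set
\[
\beta(x) := \frac{s^2(x_0)}{s^2(x)} - 1 = \frac{s^2(x_0) - s^2(x)}{s^2(x)},
\]
so that \eqref{E:L4} holds by construction, and it remains only to estimate $|\beta(x)|$ for $x$ near $x_0$. Using $s^2(x) \geq 1$ one gets the crude but sufficient bound
\[
|\beta(x)| \leq |s^2(x) - s^2(x_0)| = \bigl|(A'(x))^2 - (A'(x_0))^2\bigr|.
\]

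Next, by continuity of $A'$ (equivalently of $s^2$) at $x_0$, given any $\epsilon > 0$ there is $\delta > 0$ such that $|s^2(x) - s^2(x_0)| < \epsilon$ whenever $|x - x_0| < \delta$; combining this with the previous display yields $|\beta(x)| < \epsilon$ on $I = I_\delta(x_0)$, which is exactly \eqref{E:L4}. Alternatively one could apply Lemma \ref{L:1} to the $C^1$ function $A'$ to quantify the modulus of continuity, but no such quantitative control is needed here.

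There is essentially no obstacle: the only point requiring any care is that the denominator $s^2(x)$ stays bounded away from zero, which is immediate since $s^2 \geq 1$ everywhere, so dividing by it does not spoil the estimate. The lemma is thus a soft consequence of the hypothesis $A \in C^2$, stated separately only to streamline the bookkeeping in the proofs of the main results.
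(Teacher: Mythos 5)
Your argument is correct, and it is genuinely simpler than the one in the paper. You define $\beta(x) := s^2(x_0)/s^2(x) - 1$ so that \eqref{E:L4} holds by construction, observe $s^2(x)\ge 1$ to get $|\beta(x)| \le |s^2(x)-s^2(x_0)|$, and then invoke nothing more than continuity of $s^2$ (equivalently of $A'$) at $x_0$; in particular your proof only uses $A\in C^1$, and the $C^2$ hypothesis in the statement plays no role. The paper's proof instead applies Taylor's theorem to $f = 1/s^2$, bounds the remainder $|R_0(x)|\le \|f'\|_{L^\infty(I)}|x-x_0|$ with $\|f'\|_{L^\infty(I)}\le 2\|A'\|_{L^\infty(I)}\|A''\|_{L^\infty(I)}$, and reads off $\delta$ from that, i.e.\ it genuinely uses $A''$ and produces a quantitative, Lipschitz-type modulus of continuity rather than the qualitative $\epsilon$-$\delta$ you obtain. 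Both routes yield the stated conclusion; yours is shorter and requires weaker hypotheses, while the paper's version is uniform in style with Lemmas \ref{L:1}--\ref{L:3} and yields an explicit rate, which is convenient when these lemmas are combined in the proof of Theorem \ref{T:Re and Im local}. Either way, the lemma as stated is established.
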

\begin{proof}
Taylor's theorem gives $\delta_1>0$ (which we may choose to be finite) such that for any $x\in I_{\delta_1}(x_0)$ we have
$$
f(x) = f(x_0)\left( 1 + \frac{R_0(x)}{f(x_0)}\right)
$$
with $\displaystyle{|R_0(x)|\leq \|f'\|_{L^\infty(I)}|x-x_0|}$, where $f(x) = 1/s^2(x)$. 
Since
$$
\|f'\|_{L^\infty(I)} \leq 2\|A'\|_{L^\infty(I)}\|A''\|_{L^\infty(I)},
$$
 the conclusion holds if we choose $\beta(x) = R_0(x)s^2(x_0)$ and 
 $$\displaystyle{0<\delta\leq \min\left\{\delta_1,\ \frac12\frac{s^2(x_0)}{\|A'\|_{L^\infty(I)}\|A''\|_{L^\infty(I)}}\right\}}.$$
\end{proof}

\section{Proofs of the main results}\label{S:proofs-main}

\subsection{Proof of Theorem \ref{T:Re and Im local}} The proof of conclusion (a) will follow from 
finitely many applications of Lemmas \ref{L:Rolle} 
 through \ref{L:4}. We henceforth set $\delta > 0$ to be the minimum among  the positive numbers $\delta$ obtained in those lemmas. Let $I=(x_0-\delta, x_0+\delta)$ and let
${\mathbf{z}} := (u+iA(u), x+i A(x), v+i A(v))$ be any three-tuple of distinct points on $\Gamma(I)^3$.
By Lemma \ref{L:Rolle} such points
 are non-collinear, thus $c^2(\mathbf z)$ is strictly positive and it admits the representation
 \eqref{E:menger on curves}. We may assume without loss of generality that
 $u<x<v$.  We write $v-u = (v-x) + (x-u)$ and obtain that the numerator in the righthand side of  
 \eqref{E:menger on curves} equals 4 times 
 $$
 \!\left[(v-x)\!\int\limits_u^x\! A'(t)dt\, - \,(x-u)\!\int\limits_x^v\!(A'(t))dt
 \right]^{\!2}\, .
 $$
 Adding and subtracting the quantity $A'(u)$ from the first integral, and the quantity $A'(x)$ from the second integral, leads us to the following expression for the numerator in the righthand side of  
 \eqref{E:menger on curves}:
 \begin{align*}
 \!\Bigl[(v-x)\!\int\limits_u^x\!(A'(t)-A'(u))dt\, - (v-x)(x-u)\big(A'(x)-A'(u)\big)\, - (x-u)\!\int\limits_x^v\!(A'(t)-A'(x))dt
 \Bigr]^{\!2}\, .
 \end{align*}
Applying Lemma \ref{L:2} to the each of the two integral terms, and Corollary \ref{C:CtoL1} to the remaining term, we see that the above quantity equals
\begin{align*} 
& \bigl[(v-x)(x-u)A''(x_0) \bigr]^2\! \times \\ &\hskip1in \left[
(x-u)\big(1+ R(u, x)\big) -2(x-u)\big(1+\mu(u, x)\big) - (v-x)\big(1+ R(x, v)\big)
\right]^2 \!\! \\ 
= & \bigl[(v-x)(x-u)(v-u) A''(x_0) \bigr]^2\, \\ &\hskip1in + 
\big[C^2(u, x, v)-2(v-u)\,C(u, x, v)\big] [(v-x)(x-u)A''(x_0)]^2
\end{align*} 
where
\begin{equation}\label{E:DefC}
C(u, x, v)\, :=\, (x-u)\big[R(u, x) -2\mu (u, x)\big] - (v-x)R(x, v)\, ,
\end{equation}
and $|\mu (u, x)|<\epsilon$ and $|R(u, x)|<\epsilon$.
Furthermore, each of $(x-u)$ and $(v-x)$ is less than $(v-u)$, thus 
\begin{equation}\label{E:Cbound}
|C^2(u, x, v)-2(v-u)\,C(u, x, v)|\leq 8\epsilon(1+2\epsilon)(v-u)^2.
\end{equation}
Plugging the above in \eqref{E:menger on curves} we obtain
$$
c^2(\mathbf z) = E_1(u, x, v)\ +\ 
(A''(x_0))^2 \times
$$
$$
\times
\left[\frac{(v-x)^2}{(v-x)^2+ (A(v)-A(x))^2}\right]\!\!
\left[\frac{(x-u)^2}{(x-u)^2+ (A(x)-A(u))^2}\right]\!\!
\left[\frac{(v-u)^2}{(v-u)^2+ (A(v)-A(u))^2}\right]\!
$$
\vskip0.07in
with
$$
E_1(u, x, v):= (A''(x_0))^2 \times
$$
$$
\times
\left[\frac{(v-x)^2}{(v-x)^2+ (A(v)-A(x))^2}\right]\!\!
\left[\frac{(x-u)^2}{(x-u)^2+ (A(x)-A(u))^2}\right]\!\!
\left[\frac{C^2(u, x, v)-2(v-u)\,C(u, x, v)}{(v-u)^2+ (A(v)-A(u))^2}\right].
$$
\vskip0.1in
\noindent We now apply Lemma \ref{L:3} to each of the fractional factors in  $E_1(u, x, v)$ 
 (use \eqref{E:Cbound} to deal with the third factor) and obtain that
 $$
 |E_1(u, x, v)|\leq \left(\frac{A''(x_0)}{s^3(x_0)}\right)^{\!2}\!\!\big|(1+\rho(x, v))
 (1+\rho(u, x))
 (1+\rho(u, v))\big| 8\epsilon (1+2\epsilon)\leq \epsilon'.
 $$
 One more application of Lemma \ref{L:3} gives
\begin{align*} 
c^2(\mathbf z) &=
\left(\frac{A''(x_0)}{s^3(x_0)}\right)^{\!2}\!\!
\big(1+\rho(x, v)\big)\big(1+\rho(u, x)\big)\big(1+\rho(u, v)\big)\, +\, E_1(u, x, v)\  \\ 
&= \kappa_0^2 + E_2(u, x, v) \, ,\quad \mathrm{with}\ |E_2(u, x, v)|<\epsilon\,.
\end{align*} 
The proof of part (a) in Theorem \ref{T:Re and Im local} is concluded.
\vskip0.1in

\noindent To prove conclusion (b), we begin by making the following claim:
\begin{equation}\label{E:kappa}
\mathtt S[\,\mathrm{Re}K_\Gamma \,](\mathbf z) = \frac32 \kappa_0^2\, +\, \lambda(\mathbf z)\quad 
\mathrm{with}\ |\lambda (\mathbf z)|<\epsilon
\end{equation}
whenever $\mathbf z\in \Gamma (I)^3$. To see this, recall that Lemma \ref{L:sym-id-curves} gives
\begin{align*} 
\mathtt S[\,\mathrm{Re}K_\Gamma \,](\mathbf z) &= 2 \sum\limits_{\stackrel{j}{k<l}}\, \frac{1}{s^2(x_j)\ell^2_k\,\ell^2_l}\,
\Big[A'(x_j)(x_j-x_k)-\big(A(x_j)-A(x_k)\big)\!\Bigr] \\ &\hskip1.5in  \times 
 \Bigl[A'(x_j)(x_j-x_l)-\big(A(x_j)-A(x_l)\big)\!\Bigr]\ \\
&=2\sum\limits_{\stackrel{j}{k<l}}\, \frac{1}{s^2(x_j)\ell^2_k\,\ell^2_l}
\left[\ \int\limits_{x_k}^{x_j}(A'(t)-A'(x_j))dt\right]
 \times 
 \left[\ \int\limits_{x_\ell}^{x_j}(A'(t)-A'(x_j))dt\right]\, .
\end{align*} 
By Lemmas \ref{L:2}, \ref{L:3} and \ref{L:4}  the latter equals
\begin{align*} 
\frac{1}{2}(A''(x_0))^2 &\sum\limits_{\stackrel{j}{k<l}}\, \frac{1}{s^2(x_j)}
\frac{(x_j-x_k)^2}{\ell^2_\ell}\frac{(x_j-x_\ell)^2}{\ell^2_k} (1+R_{jk})(1+ R_{j\ell}) \\ 
&=
\frac{1}{2}\left(\frac{A''(x_0)}{s^2(x_0)}\right)^{\!2}\sum\limits_{\stackrel{j}{k<l}}\, 
\frac{1}{s^2(x_j)}(1+\rho_{jk})(1+\rho_{j\ell})(1+R_{jk})(1+ R_{j\ell}) \\ 
&=\frac{1}{2}\left(\frac{A''(x_0)}{s^3(x_0)}\right)^{\!2}\sum\limits_{\stackrel{j}{k<l}}\, 
(1+\beta_j)(1+\rho_{jk})(1+\rho_{j\ell})(1+R_{jk})(1+ R_{j\ell}) \\ 
&=\frac12 \kappa_0^2\left[3 + \eta (\mathbf z)\right] \quad \mathrm{with}\ |\eta (\mathbf z)|<\epsilon
\end{align*} 
whenever $\mathbf z\in \Gamma(I)^3$.
This ends the proof of \eqref{E:kappa}. The conclusion of the proof of part (b) is now an immediate consequence of \eqref{E:kappa} along with part (a) and the familiar 
split \eqref{E:Symm-real-im}.
\vskip0.1in
\subsection{Proof of Theorem \ref{T:global-bound-of Re-for-Lip-A'}}
For notational simplicity, we write $z_j \in \Gamma^3$ as $z_j = x_j + i A(x_j) =  x_j + i A_j$ and similarly set  $A'_j = A'(x_j), \; s_j = s(x_j)$. 
Recall from Lemma \ref{L:sym-id-curves} that
\begin{align*}
\frac{1}{2} \mathtt S[\text{Re}(K_\Gamma )](\mathbf z) &= \sum\limits_{\stackrel{j=1}{k<l}}^3
\frac{1}{\ell^2_k\ell^2 s^2_j}
 \left[ A_j'(x_j - x_k) - (A_j - A_k)\right] \times  \left[A_j'(x_j - x_l) - (A_j - A_\ell) \right] \\ 
&= \frac{1}{\ell_1^2 \ell_2^2 \ell_3^2} \sum\limits_{\stackrel{j=1}{k<l}}^3
\frac{\ell_j^2}{s_j^2} \Bigl[ A_j'(x_j - x_k) - (A_j - A_k) \Bigr] \times \Bigl[ A_j'(x_j - x_l) - (A_j - A_l) \Bigr] \\
&=  \frac{1}{\ell_1^2 \ell_2^2 \ell_3^2} \sum\limits_{\stackrel{j=1}{k<l}}^3
\frac{\ell_j^2}{s_j^2} \Bigl[ \int_{x_j}^{x_k}( A'(x) - A'(x_j)) \, dx  \Bigr] \times \Bigl[ \int_{x_j}^{x_l}( A'(x) - A'(x_j)) \, dx  \Bigr]. 
\end{align*}
 Now the
 hypothesis \eqref{A'-Lipschitz} along with the fact that $(x_k - x_j)^2 \leq (x_k - x_j)^2 + (A_k - A_j)^2 = \ell_l^2$ (and similarly, $(x_l - x_j)^2 \leq \ell_k^2$) lead us to 
\begin{align*}
\frac{1}{2}\, \bigg|\, \mathtt S[\text{Re}(K_\Gamma )](\mathbf z)\bigg| & 
\leq \frac{M^2}{\ell_1^2 \ell_2^2 \ell_3^2}
 \sum_{j=1}^{3} \frac{\ell_j^2}{s_j^2} \frac{(x_k - x_j)^2}{2} \times \frac{(x_{l} - x_j)^2}{2}  \\
&\leq \frac{M^2}{\ell_1^2 \ell_2^2 \ell_3^2}
\sum_{j=1}^{3} \frac{\ell_j^2 \ell_k^2 \ell_l^2}{4\,s_j^2} \\
&= \frac{M^2}{4} \sum_{j=1}^{3} \frac{1}{s_j^2} \\
&\leq \frac34 M^2,
\end{align*}  
where the last inequality follows from the trivial bound $s_j^{2} = 1 + (A'_j)^2 \geq 1$ for all $j$. 
The proof is concluded.
\vskip0.1in
\subsection{Proof of Lemma \ref{L: example-optimal}} Applying Lemma \ref{L:sym-id-curves} to $A(x)= x^3$ and $x_1= -\epsilon\alpha$; $x_2=0$; $x_3=\epsilon\beta$ with $\alpha, \beta\in [0, 1]$ and $\epsilon \in (0, 1)$ we find that 
\vskip0.05in
\begin{align}\label{E:formula}
\frac12\,&\mathtt S[\text{Re}(K_h)](\mathbf z) \\ 
&\hskip0.5in = \frac{\epsilon^2\big(4(\alpha-\beta)^2 + 3\alpha\beta\big) + \epsilon^4X(\alpha, \beta) + \epsilon^6Y(\alpha, \beta) +\epsilon^8 W(\alpha, \beta) + \epsilon^{10} Z(\alpha, \beta)}
{(1+9\epsilon^2\alpha^2)(1+9\epsilon^2\beta^2)(1+\epsilon^4\alpha^4)(1+\epsilon^4\beta^4)
[1+\epsilon^4(\beta^2-\alpha\beta+\alpha^2)^2]}, \nonumber
\end{align}
where each of $X, Y, W$ and $Z$ is a real-valued polynomials in $\mathbb R^2$ and thus achieves a minimum for $(\alpha, \beta)\in [1/2, 1]^2$, which we call $X_0, Y_0, W_0$ and $Z_0$, respectively. Since the Lipschitz constant of $A'(x) = 3x^2$ in the interval $|x|<\epsilon$ is $M_\epsilon = 6\epsilon$, it follows that the numerator in the expression above is bounded below by the quantity
$$
\frac{M^2_\epsilon}{24} + 2\epsilon^4X_0 + 2\epsilon^6Y_0 +2\epsilon^8 W_0 + 2\epsilon^{10}Z_0
$$
for all $(\alpha, \beta)\in [1/2, 1]^2$ and for each $\epsilon>0$.
\vskip0.1in 
\noindent On the other hand, the denominator in the righthand side of \eqref{E:formula} is easily seen to be bounded above by the quantity 
$$
10^2\cdot2^2\cdot\left(1+ \frac{25}{16}\right) = 5^2\cdot 41
$$
for all $(\alpha, \beta)\in [1/2, 1]^2$ and for each $0<\epsilon<1$.
\vskip0.1in
\noindent One now considers various cases, depending on the sign of each of $X_0, Y_0, W_0$ and $Z_0$: if these are all non-negative, then it is clear from the above that 
$$
\mathtt S[\text{Re}(K_h)](\mathbf z)\,\geq c_0 M^2_\epsilon\quad \text{with}\quad c_0 :=
\frac{1}{24\cdot 5^2\cdot 41}\quad \text{and for all}\quad 0<\epsilon<\delta_0:=1.
$$
Suppose next that, say, $X_0<0$ whereas $Y_0, W_0$ and $Z_0$ are all non-negative: in this case it follows that
$$
\mathtt S[\text{Re}(K_h)](\mathbf z)\,\geq \frac{M^2_\epsilon}{24\cdot 5^2\cdot 41}\, -\,
\epsilon^2\frac{2|X_0|}{24\cdot 5^2\cdot 41}\, ,
$$
which gives that
$$
\mathtt S[\text{Re}(K_h)](\mathbf z)\,\geq\, c_0 M^2_\epsilon\quad \text{with}\quad
 c_0 :=
\frac{1}{48\cdot 5^2\cdot 41}\quad \text{and for all}\ \ 0<\epsilon^2<\min\{1, \delta_0^2\},\ \delta_0^2:=\frac{3}{8|X_0|}.
$$
(Here we have used the fact that $M_\epsilon = 6 \epsilon$.) 
\vskip0.1in
\noindent Similarly, the case: $X_0<0, Y_0<0$ and $W_0\geq 0, Z_0\geq 0$ leads to
$$
 c_0 :=
\frac{1}{48\cdot 5^2\cdot 41}\quad \text{and}\quad \delta_0^4:=\frac{3}{8(|X_0|+|Y_0|)}; 
$$
etc. The proof of the Lemma is concluded.
\vskip0.1in
\noindent {\bf Proof of Lemma \ref{L:global-bound-of Menger-for-Lip-A'}.}\quad Let $\mathbf z =(u+iA(u);  x+iA(x); v+iA(v))$ be a three-tuple of distinct points in $\Gamma$. Without loss of generality we may assume that
$$
u<x<v\, .
$$
Lemma \ref{L:menger on curves} gives 
$$
c^2(\mathbf z) = 
\frac{4\,\left[A(x)(v-u) - A(u)(v-x) - A(v)(x-u)\right]^2}
{\ell^2_u\, \ell^2_x\, \ell^2_v}.
$$
Since $\ell_u^2 = (v-x)^2 + (A(v) - A(x))^2$, etc., we have that the denominator in the representation formula for $c^2(\mathbf z)$ is bounded below by the quantity

\begin{equation}\label{E:lower-bd-denom}
\ell^2_u\, \ell^2_x\, \ell^2_v\geq (v-x)^2(x-u)^2(x-u)^2\, .
\end{equation}
\vskip0.1in
\noindent On the other hand,  the numerator in the formula for $c^2(\mathbf z)$ equals
$$
4[(A(x)-A(u))(v-x) -(A(v)-A(x))(x-u)]^2 =
$$
$$
4\left[\int\limits_u^x(A'(t)-A'(u))dt\, (v-x)
 -(A'(v)-A'(u))(x-u)(v-x) 
 -\int\limits_x^v(A'(t)-A'(v))dt\, (x-u)
\right]^2.
$$

But the latter is bounded above by the quantity
$$
4M^2\left[\frac{(x-u)}{2}^{\!2}(v-x) + (v-u)(x-u)(v-x) + \frac{(v-x)}{2}^{\!2}(x-u)\right]^2
$$
and since each of $(x-u)$ and $(v-x)$ is less than $(v-u)$, the quantity above is further bounded by
$$
8 M^2 (v-x)^2(x-u)^2(x-u)^2.
$$
Combining the latter with \eqref{E:lower-bd-denom} we obtain the desired conclusion. 
\vskip0.1in

\subsection{Proof of Theorem \ref{T:positivity-on-curves-with-fixed-curvature}}
 Let us recall from Lemma \ref{L:sym-id-curves} that
\begin{align*}
\mathtt S[\,\mbox{Re}\, K_h\,](\mathbf z) &=\sum\limits_{\stackrel{j}{k<l}}\,
\frac{1}{s^2(x_j)\ell^2_k\,\ell^2_l}\,
\big\{A'(x_j)(x_j-x_k)-\big(A(x_j)-A(x_k)\big)\!\big\} \\ &\hskip1in \times
\big\{A'(x_j)(x_j-x_l)-\big(A(x_j)-A(x_l)\big)\!\big\}\, 
\end{align*}
for any three-tuple $\mathbf z =\{z_1, z_2, z_3\}$ of distinct points on $\Gamma$. 
We claim that each of the three terms in the above summation is non-negative by the assumed fixed concavity of $\Gamma$, that is by the hypothesis that 
\begin{equation}\label{E:concavity}
A''(x)\geq 0 \ \ \text{for every } x\in J\, , \ \ (\text{alt.}\  A''(x)\leq 0\  \text{for every } x\in J).
\end{equation}
To see this,  we assume (without loss of generality) that the three distinct points $\{z_1, z_2, z_3\}$ have been labeled so that
\begin{equation}\label{E:labeled}
x_1<x_2<x_3\, ,\quad \text{where}\ z_j=x_j+iA(x_j).
\end{equation}
Now examining for instance the term corresponding to $j=2$ we find that
\begin{equation}\label{E:integrals}
\big\{A'(x_2)(x_2-x_1)-\big(A(x_2)-A(x_1)\big)\!\big\}
\big\{A'(x_2)(x_2-x_3)-\big(A(x_2)-A(x_3)\big)\!\big\}\ =
\end{equation}
$$
=\ \left(\,\int\limits_{x_1}^{x_2}\!\!\big(A'(x_2)-A'(x)\big)dx\right)\!
\left(\,\int\limits_{x_3}^{x_2}\!\!\big(A'(x_2)-A'(x)\big)dx\right)\ =
$$
$$
=\ \left(\,\int\limits_{x_1}^{x_2}\!\!\big(A'(x_2)-A'(x)\big)dx\right)\!
\left(\,\int\limits_{x_2}^{x_3}\!\!\big(A'(x_2+x_3-t)-A'(x_2)\big)dt\right)\ 
$$
and it is immediate to see that the latter is non-negative because of \eqref{E:concavity} and 
\eqref{E:labeled}. The remaining two terms are dealt with in a similar fashion.
The proof is concluded.

\section{Examples}\label{S:Examples}

\subsection{Failure of global relative boundedness for ${\tt S}[\mathrm{Re}K_\Gamma ]$ and ${\tt S}[\mathrm{Im}K_\Gamma ]$).}\label{SS:failed rel bdd}
Let $A: (-1, 2)\to \mathbb R$ be a smooth function obeying the following constraints:
$$
A(0)=0;\  \ A\left(\frac12\right)=0;\ \ A(1)=0;
$$
$$
A'(0)=0;\ \ \ \ A'\left(\frac12\right)= -1;\ \ A'(1)=0\, .
$$
For instance, the function
\begin{equation}\label{E:rel-bdd-fail}
A(x) = \chi(x)\,\sin2\pi x
\end{equation}
where $\chi$ is in $C^\infty_0\big((0,1)\big)$ and has $\chi (1/2)=1/(2\pi)$, satisfies all of the conditions above.
If we further require that $\chi'(1/2)\neq 0$ then $A''(1/2)\neq 0$ and therefore the curve $\Gamma =\{x+iA(x),\ x\in J = (-1, 2)\}$
 has nonzero curvature $\kappa_0$ at the the point
$$
z_0 := \frac12 \in\Gamma\, .
$$
We claim that for such $\Gamma$ and for $\tilde\epsilon>0$ as in  Corollary \ref{C:bddness}, the interval 
$$
\tilde I = \left(\frac12-\tilde \delta, \frac12 +\tilde \delta\right)
$$
that was obtained there
is strictly contained in $J= (-1, 2)$. To see this, we argue by contradiction and suppose that 
\eqref{E:local-relative-bdd} were to
 hold for any $\mathbf z\in \Gamma^3$. Invoking the conclusions and notation of \cite[Proposition 2.2]{LPg}, it is easy to see that the presumed validity of any of the two inequalities displayed in \eqref{E:local-relative-bdd} is equivalent to the requirement that
$$
\big|1-\mathcal R_h(\mathbf z)\big|\leq \frac{\tilde\epsilon}{\kappa_0^2-\tilde\epsilon}\quad \text{for\ all\ non-collinear\ three-tuples}\ \ \mathbf z\in \Gamma^3.
$$
But the latter would obviously imply that
\begin{equation}\label{E:impossible-h}
|\Rh (\mathbf z)|\leq C \quad \text{for any non-collinear three-tuple}\ \mathbf z\in \Gamma^3
\end{equation}
which is, in fact, not possible. To see this,
consider ordered three-tuples of 
of the form
$$
\mathbf z_\lambda = \big(0;\ \lambda +iA(\lambda); \ 1\big)\in \Gamma^3, \quad 0<\lambda\leq \frac12\, .
$$
Such three-tuples are {\em admissible} in the sense of \cite[Definition 2.1]{LPg}, and the triangles with vertices at $\mathbf z_\lambda$
have the following properties as $\lambda\to1/2$:
$$
\theta_{j, \lambda}\to 0,\ j=1, 2;\ \ \theta_{3,\lambda}\to \pi;\ \ 
\ell_{j,\lambda}\to \frac12,\ j=1, 2; \ \ell_{3,\lambda}=1.
$$
Furthermore, with the notations of \cite[(2.6)]{LPg}
  for each such triangle we have $\alpha_{21}=0$. Thus \cite[Proposition 2.2]{LPg} gives that
$$
\Rh (\mathbf z_\lambda)\ =\ 
$$
$$
=\ \frac{\ell_{1, \lambda}}{4\ell_{2, \lambda}\sin^2\!\theta_{1, \lambda}}
\,\big(\ell_{1, \lambda}\cos (2h(0)-\theta_{1, \lambda}) +
\ell_{2, \lambda}\cos (2h(1)-\theta_{2, \lambda}) -
\ell_{3, \lambda}\cos (2h(\lambda+iA(\lambda)) +\theta_{2,\lambda}-\theta_{1, \lambda})
\big).
$$
Since $A'(0)=A'(1)=0$, 
$$
e^{ih(0)}=e^{ih(1)}=-i=e^{i\pi}\,, \quad \text{thus}\ \ h(0)=h(1)=\pi\, . 
$$
Also
$$
e^{ih\left(\frac12\right)} = \frac{-1-i}{\sqrt{2}} = e^{-i\frac34 \pi}
\,, \quad \text{thus}\ \ h\!\left(\frac12\right) =-\frac34\pi\, .
$$
If condition \eqref{E:impossible-h} were to hold at $\mathbf z_\lambda$ for any $0<\lambda<1/2$
 then it would follow that
$$
\ell_{1,\lambda}\big|\ell_{1, \lambda}\cos (2h(0)-\theta_{1, \lambda}) +
\ell_{2, \lambda}\cos (2h(1)-\theta_{2, \lambda}) -
\ell_{3, \lambda}\cos (2h(\lambda+iA(\lambda)) +\theta_{2,\lambda}-\theta_{1, \lambda})\,
\big|\leq 
$$
$$
\leq\,  C\ell_{2, \lambda}\sin^2\!\theta_{1, \lambda}\quad  \text{for every } 0<\lambda<1/2\, ,
$$
but this is not possible because with our choice of $h$ the lefthand side of this inequality tends to 
$$
\frac12\left(\frac12\cos 2\pi\ +\ \frac12\cos 2\pi\ -\ \cos\left(-\frac32 \pi\right)\right) =\frac12
$$
as $\lambda\to 1/2$, whereas the righthand side tends to 0.
\vskip0.2in
\subsection{Failure of global negativity of ${\tt S}[\mathrm{Im}K_\Gamma ](\mathbf z)$ for $\Gamma$ with fixed concavity.}\label{SS:Im-pos}
Let $A(x)=x^2$ and set $\Gamma =\{x+iA(x),\ x\in\mathbb R\}$.  Consider three-tuples of the form
$$
\mathbf z_\lambda =\big(-\lambda + i\lambda^2\,;\ 0\,;\ \lambda+i\lambda^2\big),\quad \lambda>0.
$$
Lemma \ref{E:RE-graph} gives
$$
\mathtt S\,[\text{Im}\, K_h\,](\mathbf z_\lambda) = \frac{32}{\lambda^2(4+\lambda^2)}
\left(\frac{2+\lambda^2}{8(1+\lambda^2)} - \frac{1}{4+a^2}\right)
$$
and it is clear from the above that $\mathtt S\,[\text{Im}\, K_h\,](\mathbf z_\lambda)>0$ for $\lambda  \gg 1$.
\vskip0.2in

\subsection{Failure of global positivity of \ ${\tt S}[\mathrm{Re}K_\Gamma ](\mathbf z)$ in the absence of fixed concavity of $\Gamma$.}\label{SS: fail-global-pos}

Let $A(x)=x^3$ and set $\Gamma =\{x+iA(x),\ x\in\mathbb R\}$.
       Fix $a>0$ and let $\lambda>0$. Consider three-tuples of the form
$$
\mathbf z_\lambda =\big(\!-a-ia^3\,;\ 0\,;\ \lambda+i\lambda^3\big),\quad \lambda>0.
$$
We claim that
$$
\mathtt S\,[\mathrm{Re}K_h\,](\mathbf z_\lambda)\ <\ 0\quad \text{whenever}\ \lambda \gg a.
$$
To prove the claim we express $\mathtt S\,[\mathrm{Re}K_h\,](\mathbf z_\lambda)$ using \eqref{E:RE-graph} and \eqref{E:integrals}, and obtain
$$
\mathtt S\,[\mathrm{Re}K_h\,](\mathbf z_\lambda)\ =\ I_\lambda \ +\ II_\lambda\ +\ III_\lambda\, \quad \text{where}
$$
$$
\mathtt {I}_\lambda =\ \frac{2\, a\, (2a^2-\lambda^2)}
{(\lambda +a)(1+a^2)\big[1+a^2+\lambda\,(\lambda -a)\big](1+9a^2)}\, ;
$$
\vskip0.05in
$$
\mathtt{II}_\lambda =\ -\,\frac{a\lambda}{ (1+a^2)(1+\lambda^2)}\ <\ 0\quad \text{for any}\ a>0, \lambda>0\,;
$$
\vskip0.05in
$$
\mathtt{III}_\lambda=\ \frac{2\, \lambda\, (2\lambda^2-a^2+a\lambda)}
{(\lambda +a)(1+\lambda^2)\big[1+\lambda^2+a\,(a-\lambda)\big](1+9\lambda^2)}\, .
$$
Note that 
$$
 \mathtt{I}_\lambda <0\quad \text{if}\quad \lambda \gg a\, ,\quad \text{and}\quad  \mathtt{III}_\lambda\ =\ O(\lambda^{-4}).
$$
Thus $\mathtt{I}_\lambda  + \mathtt{II}_\lambda  + \mathtt{III}_\lambda<0$ whenever $\lambda \gg a$. The claim is proved.
\section{Appendix}
\label{S:appendix}
As a point of comparison with the results of this article, it is interesting to note that the boundedness and positivity of $\mathtt{S}[\text{Re} K_h]$ and  $\mathtt{S}[\text{Im} K_h]$ with $K_h$ as in \eqref{E:Kh-def} fail globally on $\mathbb C$ for {\em{any}} continuous, non-constant, globally defined $h:\mathbb C\to \mathbb R$. This appendix summarizes the main results in this direction. The methods of proof bear some similarities to techniques recently developed in  a body of work joint in part by Chousionis, Chunaev, Mateu, Prat and Tolsa \cite{{CMPT}, {CMPT2}, {CP}, {C}, {CMT-1}, {CMT-2}}. 
 For example, 
 we
  use 
  representation formul\ae\,  for symmetrized forms that rely upon a certain labeling scheme of the vertices of a triangle, 
  analogous to
   \cite[Proposition 3.1]{CP} and \cite[Lemma 6]{C},  as well as the computations accompanying the diagrams Figures 3 and 4 in \cite[p. 2738]{C}. 
Hence, for brevity
we limit the exposition here to the statements
 of the pertinent results and defer all proofs to the auxiliary note \cite{LPg}.  
\subsection{Preliminaries}
First note that putting 
$h \equiv 0$ (or a constant) in \eqref{E:Kh-def} yields the original kernel $K_0$ (or a constant multiple of it).  Also it is immediate to see that
 \begin{equation}\label{E:same}
 \mathtt S\,[K_h](\mathbf z) \, =\, 
\mathtt  S[K_0] (\mathbf z) = c^2(\mathbf z)\
 \end{equation}
 for any three-tuple of distinct points
 and for any $h:\mathbb C\to\mathbb R$ (no continuity assumption needed here).
\begin{defn}\label{D:admissible}
We say that an ordered three-tuple of non-collinear points
$(a, b, c)$ is arranged in {\em admissible order} (or is {\em admissible}, for short) if {\tt(i)}
the orthogonal projection of $c$ onto the line determined by $a$ and $b$ falls in the interior of the line segment joining $a$ and $b$, and {\tt(ii)} the triangle with vertices $a, b$ and $c$, henceforth denoted $\Delta (a, b, c)$, has positive counterclockwise orientation.
\end{defn}
\begin{prop}\label{P:symm-ids-Re-Im}
For any non-constant $h:\mathbb C\to \mathbb R$ and for any three-tuple $\mathbf z$
 of non-collinear points in $\mathbb C$ we have
 \item[]\quad 
 \begin{equation}\label{E:new-symm-Re}
    \mathtt S\,[\mathrm{Re} K_h](\mathbf z) 
  \displaystyle{
  =\ c^2(\mathbf z)\left(\frac12\, + \, \Rh (\mathbf z)\right)}
 \end{equation}
\begin{equation}\label{E:new-symm-Im}
  \mathtt S\,[\mathrm{Im} K_h](\mathbf z) 
  \displaystyle{
 =\ c^2(\mathbf z)\left(\frac12\, - \, \Rh (\mathbf z)\right)}
 \end{equation}
 where $\Rh $ is non-constant and invariant under the permutations of the elements of $\mathbf z$. If
 $\mathbf z= (z_1, z_2, z_3)$ is admissible then $\Rh (\mathbf z)$ is represented as follows:

 \begin{equation}\label{E:RH}
\begin{aligned}
\Rh (\mathbf z) &=\frac{2\,\ell_1\ell_2\ell_3}{\left(4\mathrm{Area}\,\Delta(\mathbf z)\right)^2}\, \times 
\Bigl[
\ell_1\cos(\s(z_1) -\tho) + 
\\ 
&\hskip1in + 
\ell_2\cos (\s(z_2) +\ttw)
-\ell_3\cos(\s(z_3) +\ttw -\tho)
\Bigr]\, .
\end{aligned} 
\end{equation} 
Here $\theta_j$ denotes the angle at $z_j$, and $\ell_j$ denotes the length of the side opposite to $z_j$ in $\Delta(\mathbf z)$.
 Also, we have set
 $$\s(z) := 2h(z) -2\ato\, ,\quad z\in\mathbb C,
  $$
where $\ato$ is the principal argument of $z_2-z_1$ (in an arbitrarily fixed coordinate system for $\mathbb R^2$). 
 \end{prop}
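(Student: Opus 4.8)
The plan is to reduce everything to a direct computation with the kernel $K_h(w,z) = e^{ih(w)}/(w-z)$, exploiting the general split \eqref{E:Symm-real-im} and the basic identity \eqref{E:same}. First I would observe that, since $\mathtt S[\mathrm{Re}K_h](\mathbf z) + \mathtt S[\mathrm{Im}K_h](\mathbf z) = \mathtt S[K_h](\mathbf z) = c^2(\mathbf z)$, it suffices to produce a single formula for $\mathtt S[\mathrm{Re}K_h](\mathbf z) - \tfrac12 c^2(\mathbf z)$; call it $c^2(\mathbf z)\,\mathcal R_h(\mathbf z)$. Then \eqref{E:new-symm-Re} holds by definition and \eqref{E:new-symm-Im} follows by subtraction. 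So the whole content is to compute $\mathtt S[\mathrm{Re}K_h]$ explicitly and isolate the "extra" term beyond Melnikov's $\tfrac12 c^2$.

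For the computation itself, I would use the real-kernel reduction \eqref{E:Symm-simpl3}: writing $H(w,z) = \mathrm{Re}\,K_h(w,z) = \mathrm{Re}\bigl(e^{ih(w)}/(w-z)\bigr)$, we have $\mathtt S[H](\mathbf z) = 2\sum_{j}\sum_{k<l} H(z_j,z_k)H(z_j,z_l)$, where for each fixed $j$ the indices $k,l$ range over the other two. The key elementary step is to expand $H(z_j, z_k) = \mathrm{Re}\bigl(e^{ih(z_j)}(\overline{z_j - z_k})/|z_j-z_k|^2\bigr) = \cos(h(z_j) - \arg(z_j - z_k))/|z_j - z_k|$. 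Writing $\phi_{jk} = \arg(z_j - z_k)$ and $\ell$'s for the side lengths, the product $H(z_j,z_k)H(z_j,z_l)$ becomes a product of two cosines divided by $\ell$'s, which I would convert via the product-to-sum formula $\cos\alpha\cos\beta = \tfrac12\cos(\alpha-\beta) + \tfrac12\cos(\alpha+\beta)$. The term $\cos(\alpha - \beta) = \cos(\phi_{jl} - \phi_{jk})$ is precisely the interior angle $\theta_j$ of the triangle at $z_j$ (up to sign), and summing these purely geometric pieces over $j$ should reproduce Melnikov's $\tfrac12 c^2(\mathbf z)$ via the law of sines and the area formula \eqref{E:Marea-1}. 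The remaining "$\cos(\alpha+\beta)$" pieces, which contain all the $h$-dependence, assemble into $c^2(\mathbf z)\,\mathcal R_h(\mathbf z)$.

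To get the clean closed form \eqref{E:RH}, I would fix an admissible labeling $(z_1, z_2, z_3)$ and introduce the normalized phase $\mathfrak h_{z_1,z_2}(z) = 2h(z) - 2\alpha_{21}$ where $\alpha_{21} = \arg(z_2 - z_1)$; subtracting a common reference angle is harmless because the $\phi_{jk}$'s differ from $\alpha_{21}$ by the triangle's interior angles, which is exactly where the $\theta_j$'s in \eqref{E:RH} come from. Tracking the bookkeeping — which $\phi_{jk}$ equals $\alpha_{21}$ plus which combination of $\pm\theta_1, \pm\theta_2$, and with which sign depending on orientation — is where the admissibility hypothesis is used, and then the three leftover cosine terms combine into the bracket $\ell_1\cos(\mathfrak h_{z_1,z_2}(z_1) - \theta_1) + \ell_2\cos(\mathfrak h_{z_1,z_2}(z_2) + \theta_2) - \ell_3\cos(\mathfrak h_{z_1,z_2}(z_3) + \theta_2 - \theta_1)$. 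Dividing by $c^2(\mathbf z) = (4\,\mathrm{Area}\,\Delta(\mathbf z))^2/(\ell_1\ell_2\ell_3)^2$ gives the prefactor $2\ell_1\ell_2\ell_3/(4\,\mathrm{Area}\,\Delta(\mathbf z))^2$. Permutation-invariance of $\mathcal R_h$ is automatic since $\mathtt S[\mathrm{Re}K_h]$ and $c^2$ are both symmetric; non-constancy of $\mathcal R_h$ follows by exhibiting two three-tuples giving different values (or noting that a constant $\mathcal R_h$ would force $h$ constant, contradicting the hypothesis).

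The main obstacle will be the angle bookkeeping in the last paragraph: keeping the signs of the $\theta_j$'s and the orientation of $\Delta(\mathbf z)$ straight when passing from the raw arguments $\phi_{jk}$ to the normalized expression, and verifying that the purely geometric cosine-of-difference terms really do telescope to Melnikov's $\tfrac12 c^2$ rather than to that plus a leftover geometric constant. This is precisely the place where one must invoke the admissible labeling of Definition \ref{D:admissible}, and it parallels the vertex-labeling computations of \cite[Proposition 3.1]{CP} and \cite[Lemma 6]{C}; since the paper defers the detailed proof to \cite{LPg}, I would carry out this reduction carefully there and only sketch it here.
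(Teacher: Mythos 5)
The paper does not contain a proof of this proposition; it explicitly defers all proofs in the appendix to the companion note \cite{LPg}, so there is no in-paper argument to compare against. Your proposal nonetheless traces what is almost certainly the intended route, and it matches the method the authors describe (a labeling scheme analogous to \cite[Proposition 3.1]{CP} and \cite[Lemma 6]{C}): pass to polar form $\mathrm{Re}\,K_h(z_j,z_k) = \cos\bigl(h(z_j)-\arg(z_j-z_k)\bigr)/|z_j-z_k|$, apply the product-to-sum formula inside \eqref{E:Symm-simpl3}, split the $\cos(\alpha-\beta)$ terms (angles) from the $\cos(\alpha+\beta)$ terms ($h$-dependence), and then track the $\phi_{jk}$'s against $\alpha_{21}$ and the $\theta_j$'s under the admissible-labeling convention. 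The reduction of \eqref{E:new-symm-Im} to \eqref{E:new-symm-Re} via \eqref{E:Symm-real-im} and \eqref{E:same} is also correct.

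Two points deserve more care than your sketch gives them. First, the assertion that the purely geometric $\cos(\phi_{jl}-\phi_{jk})$ pieces ``should reproduce'' $\tfrac12 c^2(\mathbf z)$ is true but not automatic: after noting $\cos(\phi_{jl}-\phi_{jk}) = \cos\theta_j$ you still need the identity $\sum_j \cos\theta_j/(\ell_k\ell_l) = 8\,\mathrm{Area}^2/(\ell_1\ell_2\ell_3)^2 = \tfrac12 c^2(\mathbf z)$, which follows from the law of cosines and the Heron-type expansion $16\,\mathrm{Area}^2 = 2\ell_1^2\ell_2^2+2\ell_2^2\ell_3^2+2\ell_1^2\ell_3^2-\ell_1^4-\ell_2^4-\ell_3^4$; that step should appear explicitly, since it is where Melnikov's $\tfrac12$ actually comes from and where a sign error would propagate silently. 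Second, when you do the final bookkeeping, verify the overall constant against a concrete example (e.g., an equilateral triangle with $h$ nonzero at one vertex); the raw computation yields $c^2\,\mathcal R_h(\mathbf z) = (\ell_1\ell_2\ell_3)^{-1}\sum_j \pm\,\ell_j\cos\bigl(2h(z_j)-\phi_{jk}-\phi_{jl}\bigr)$, and you should confirm that this normalization is consistent with the prefactor $2\ell_1\ell_2\ell_3/(4\,\mathrm{Area}\,\Delta(\mathbf z))^2$ appearing in \eqref{E:RH} rather than take that prefactor on faith. Finally, your parenthetical route to non-constancy of $\mathcal R_h$ (``a constant $\mathcal R_h$ would force $h$ constant'') is the content of Theorem \ref{T:L-infty}, so you cannot invoke it circularly here; the direct route of exhibiting two three-tuples on which $\mathcal R_h$ takes different values is the one to use at this stage.
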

\subsection{Failure of universal boundedness and positivity for 
$\mathtt S\,[\mathrm{Re} K_h](\mathbf z)$ and $\mathtt S\,[\mathrm{Im} K_h](\mathbf z)$.}
On account of Proposition \ref{P:symm-ids-Re-Im}, 
the behavior of the symmetrized forms
 of $\mathrm{Re} K_h$ and $\mathrm{Im} K_h$ are reduced to the analysis of the reminder $\Rh$.
 \begin{thm}\label{T:L-infty}
 Suppose that $h:\mathbb C\to \mathbb R$ is continuous.  The following are equivalent:
\begin{itemize}
\item[]
\item[{\tt (i)}]  \ \ 
There is  a constant $C<\infty$, possibly depending on $h$, such that $$|\Rh (\mathbf z)|\leq C$$
\ \ for any  three-tuple $\mathbf z=\{z_1, z_2, z_3\}$ of non-collinear points in $\mathbb C$.

\item[]
\item[{\tt (ii)}] \quad 
$\Rh (\mathbf z) \, =\, 0$ \, for any three-tuple
of non-collinear points in $\mathbb C$.
\item[]
\item[{\tt (iii)}] \quad
 $h$ is constant.
 \item[]
\end{itemize}
\end{thm}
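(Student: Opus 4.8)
The plan is to establish the cycle (ii) $\Rightarrow$ (i) $\Rightarrow$ (iii) $\Rightarrow$ (ii); the middle implication carries essentially all of the content. That (ii) $\Rightarrow$ (i) holds (with $C=0$) is immediate. For (iii) $\Rightarrow$ (ii): if $h\equiv\eta$ is constant then, for an admissible triple $(z_1,z_2,z_3)$, one has $\s(z_j)=2\eta-2\ato=:\psi$ for every $j$, so the bracket in \eqref{E:RH} reduces to $\lo\cos(\psi-\tho)+\ltw\cos(\psi+\ttw)-\lth\cos(\psi+\ttw-\tho)$; inserting the law of sines $\ell_j=2R\sin\theta_j$ ($R$ the circumradius), using $\tth=\pi-\tho-\ttw$, and expanding each product via $\sin A\cos B=\tfrac12[\sin(A+B)+\sin(A-B)]$, one checks that the six resulting terms cancel in pairs, so this bracket vanishes identically. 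Hence $\Rh$ vanishes on admissible triples, and by the permutation invariance asserted in Proposition \ref{P:symm-ids-Re-Im} it vanishes on every non-collinear triple, which is (ii).

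For the substantive implication (i) $\Rightarrow$ (iii) I would exploit degenerate triangles whose apex slides onto the interior of the opposite side. Fix distinct $p,q\in\mathbb C$, a point $m=(1-\tau)p+\tau q$ with $\tau\in(0,1)$, and set $\alpha:=\arg(q-p)$. For small $\varepsilon>0$ let $z_3^\varepsilon$ be the point at distance $\varepsilon$ from $m$ on the line normal to $[p,q]$ through $m$, on the side that makes $\mathbf z^\varepsilon:=(p,q,z_3^\varepsilon)$ positively oriented; for $\varepsilon$ small the foot of the altitude from $z_3^\varepsilon$ lies near $m$, hence inside $[p,q]$, so $\mathbf z^\varepsilon$ is admissible and formula \eqref{E:RH} applies to it. As $\varepsilon\to 0^+$ one has $\lth=|p-q|=:L$ fixed, $\lo\to|q-m|>0$, $\ltw\to|p-m|>0$, $\tho,\ttw\to 0$, and $\mathrm{Area}\,\D(\mathbf z^\varepsilon)=\tfrac12 L\varepsilon\to 0$, so the prefactor $2\lo\ltw\lth/\big(4\,\mathrm{Area}\,\D(\mathbf z^\varepsilon)\big)^2$ grows like $\varepsilon^{-2}$, while the bracket in \eqref{E:RH} converges, by continuity of $h$, to
\[ |q-m|\cos\!\big(2h(p)-2\alpha\big)+|p-m|\cos\!\big(2h(q)-2\alpha\big)-L\cos\!\big(2h(m)-2\alpha\big). \]
Since $\Rh(\mathbf z^\varepsilon)$ stays bounded, this limit must equal $0$; dividing by $L$ and using $|p-m|=\tau L$, $|q-m|=(1-\tau)L$ yields
\[ \cos\!\big(2h((1-\tau)p+\tau q)-2\alpha\big)=(1-\tau)\cos\!\big(2h(p)-2\alpha\big)+\tau\cos\!\big(2h(q)-2\alpha\big). \]

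Now fix any line $\mathcal L\subset\mathbb C$ with direction $\alpha$ (modulo $\pi$) and set $g(z):=\cos\big(2h(z)-2\alpha\big)$ for $z\in\mathcal L$. The identity just derived holds for all $p,q\in\mathcal L$ and all $\tau\in(0,1)$, so $g$ is affine along $\mathcal L$; being bounded by $1$, it must be constant on $\mathcal L$. Since $\alpha$ is fixed along $\mathcal L$, the relation $\cos(2h(z)-2\alpha)=\mathrm{const}$ confines $h(z)$, for $z\in\mathcal L$, to a discrete subset of $\mathbb R$, so continuity of $h$ on the connected set $\mathcal L$ forces $h$ to be constant on $\mathcal L$. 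As any two points of $\mathbb C$ lie on a common line, $h$ is globally constant, i.e.\ (iii) holds, and the cycle is complete. I expect the principal obstacle to lie in the first half of this last argument: choosing the right degenerating family, verifying that admissibility persists along it so that \eqref{E:RH} is legitimately available, and confirming that the prefactor and bracket have exactly the stated asymptotics; once the displayed functional equation is secured, the step ``affine and bounded on a line $\Rightarrow$ constant'' and the connectedness argument are routine. One could alternatively package the degeneration through a vertex-labeling scheme as in \cite[Proposition 3.1]{CP}, but the computation above seems cleanest here.
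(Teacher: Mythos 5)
The paper itself contains no proof of Theorem \ref{T:L-infty}: the appendix explicitly defers all proofs to the companion note \cite{LPg}, so there is nothing in-text to compare against. Evaluating your argument on its own merits, the cycle (ii)\,$\Rightarrow$\,(i)\,$\Rightarrow$\,(iii)\,$\Rightarrow$\,(ii) is correct, and the engine of the hard implication---letting the apex of an admissible triangle collapse onto an interior point of the opposite side so that the prefactor in \eqref{E:RH} blows up like $\varepsilon^{-2}$ while the bracket converges by continuity of $h$---is sound. The asymptotics you report are right ($\lth=L$ fixed, $\lo\to|q-m|=(1-\tau)L$, $\ltw\to|p-m|=\tau L$, $\mathrm{Area}=\tfrac12 L\varepsilon$, $\tho,\ttw\to 0$), the resulting Jensen identity for $g(z)=\cos\bigl(2h(z)-2\alpha\bigr)$ along the line through $p$ and $q$ is the correct limit, and the downstream argument (bounded plus affine forces $g$ constant on a full line; $\cos$ then confines $h$ to a discrete set, so continuity makes $h$ constant on each line; two points determine a line, hence global constancy) is complete. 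Your trigonometric verification that the bracket vanishes when $h\equiv\eta$ is also correct: with $\ell_j=2R\sin\theta_j$ and $\tth=\pi-\tho-\ttw$, the product-to-sum expansions cancel exactly.

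One small point of rigor is worth tightening. Proposition \ref{P:symm-ids-Re-Im} is stated only for non-constant $h$, so formula \eqref{E:RH} and the permutation invariance of $\Rh$ are not literally on the table when you invoke them with $h$ constant in your proof of (iii)\,$\Rightarrow$\,(ii). The cleanest repair avoids \eqref{E:RH} entirely there: if $h\equiv\eta$ then $K_h(w,z)=K_0(e^{-i\eta}w,\,e^{-i\eta}z)$, so $\mathrm{Re}\,K_h(w,z)=\mathrm{Re}\,K_0(e^{-i\eta}w,\,e^{-i\eta}z)$; since $\mathtt S[\,\cdot\,]$ and the Menger curvature are both rotation-invariant, Melnikov's identity \eqref{E:sym-id-0pr} gives $\mathtt S[\mathrm{Re}\,K_h](\mathbf z)=\tfrac12 c^2(\mathbf z)$, i.e.\ $\Rh\equiv 0$. (Alternatively, one can simply observe that the derivation of \eqref{E:RH} nowhere uses that $h$ is non-constant, so the formula, and the trig cancellation you carried out, remain valid.) A similar remark applies when you invoke the proposition's permutation invariance for a constant $h$. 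With that small caveat addressed, the proof is complete.
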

\begin{cor}\label{C:L-infty}
 Suppose that $h:\mathbb C\to \mathbb R$ is continuous. Then
$$
h\ \text{is\ constant}\quad \iff\quad
 \left(\frac12 -\Rh(\mathbf z)\right)\!\! \left(\frac12 +\Rh(\mathbf z)\right) > 0
$$
for any three-tuple of  non-collinear points in $\mathbb C$.
\end{cor}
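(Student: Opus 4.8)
The plan is to deduce the corollary directly from Theorem \ref{T:L-infty} by means of the elementary factorization
\[
\left(\frac12 - \Rh(\mathbf z)\right)\!\left(\frac12 + \Rh(\mathbf z)\right) \;=\; \frac14 - \Rh(\mathbf z)^2 ,
\]
which shows that the asserted positivity at a non-collinear three-tuple $\mathbf z$ is \emph{equivalent} to the strict pointwise bound $|\Rh(\mathbf z)| < \tfrac12$. (Via Proposition \ref{P:symm-ids-Re-Im} this amounts to saying that $\mathtt S[\mathrm{Re}\Kh](\mathbf z)$ and $\mathtt S[\mathrm{Im}\Kh](\mathbf z)$ carry the same strict sign, but this interpretation is not needed for the proof.)

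For the forward implication I would invoke (iii)$\Rightarrow$(ii) of Theorem \ref{T:L-infty}: if $h$ is constant then $\Rh(\mathbf z) = 0$ for every non-collinear three-tuple, so the product above equals $\tfrac14 > 0$ identically.

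For the converse, suppose the product is positive at every non-collinear three-tuple. Then $|\Rh(\mathbf z)| < \tfrac12$, and a fortiori $|\Rh(\mathbf z)| \le \tfrac12$ for all such $\mathbf z$; this is exactly condition (i) of Theorem \ref{T:L-infty} with the legitimate choice $C = \tfrac12 < \infty$. The implication (i)$\Rightarrow$(iii) of that theorem then forces $h$ to be constant, which closes the equivalence.

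The entire analytic content — exhibiting, for every non-constant continuous $h$, three-tuples on which $\Rh$ is unbounded — is already contained in Theorem \ref{T:L-infty}, so I do not anticipate a genuine obstacle here; the corollary is a formal consequence. The one point deserving a line of care is purely bookkeeping: the hypothesis supplies a strict inequality whereas Theorem \ref{T:L-infty}(i) only requires a finite bound, so nothing is lost in relaxing ``$< \tfrac12$'' to ``$\le \tfrac12$''.
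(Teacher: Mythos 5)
Your proof is correct and is exactly the deduction the paper intends: the corollary is stated immediately after Theorem \ref{T:L-infty} and the paper defers all proofs in this appendix to the companion note \cite{LPg}, but the logical structure is unambiguous. Rewriting the product as $\tfrac14 - \Rh(\mathbf z)^2$ to see that positivity is the strict bound $|\Rh(\mathbf z)| < \tfrac12$, then running (iii)$\Rightarrow$(ii) for the forward direction and (i)$\Rightarrow$(iii) with $C = \tfrac12$ for the converse, is the natural and complete argument; your remark that the strict inequality may be relaxed to a non-strict one for the purpose of invoking condition (i) is the right piece of bookkeeping.
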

\noindent In fact more is true.
\begin{thm}\label{T:positive}
 Suppose that $h:\mathbb C\to \mathbb R$ is continuous. 
 
 \vskip0.1in
\begin{itemize}
\item[{\tt (a)}]  \ \  If $h$ is constant, then
$$\displaystyle{\frac12 +\Rh (\mathbf z) > 0}$$
\quad for any  three-tuple
 of non-collinear points.
\item[]
\vskip0.1in
\item[{\tt (b)}] \quad If
 $h$ is not constant, then the function
 $$
 \mathbf z\ \mapsto \frac12 +\Rh (\mathbf z) 
 $$
 \quad
 changes sign.  That is, there exist two three-tuples of non-collinear points 
 
 \ $\mathbf z$ and $\mathbf z'$ such that
 $$
 \frac12 +\Rh (\mathbf z)\,  >\, 0\quad \text{and}\quad  \frac12 +\Rh (\mathbf z')\,  <\, 0.
 $$
\end{itemize}
Furthermore, 
{\tt (a)}
and {\tt (b)} are also true with $\displaystyle{\frac12 - \Rh}$ in place of $\displaystyle{\frac12 + \Rh}$.
\end{thm}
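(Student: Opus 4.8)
Part {\tt(a)} is immediate. If $h\equiv\alpha$ is constant then $\mathrm{Re}K_h(w,z)=\mathrm{Re}\bigl(e^{i\alpha}/(w-z)\bigr)=\mathrm{Re}K_0\bigl(e^{-i\alpha}w,\,e^{-i\alpha}z\bigr)$, so by Melnikov's identity \eqref{E:sym-id-0pr} and the rotation-invariance of Menger curvature, $\mathtt S[\mathrm{Re}K_h](\mathbf z)=\tfrac12c^2(\mathbf z)$ for every non-collinear triple; comparing with \eqref{E:new-symm-Re} (equivalently, recalling Theorem \ref{T:L-infty}) this forces $\mathcal R_h\equiv0$, whence $\tfrac12\pm\mathcal R_h\equiv\tfrac12>0$.

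For part {\tt(b)} I would first halve the work. Since $K_{h-\pi/2}(w,z)=e^{-i\pi/2}K_h(w,z)=-iK_h(w,z)$, we have $\mathrm{Im}K_h=\mathrm{Re}(-iK_h)=\mathrm{Re}K_{h-\pi/2}$, hence $\mathtt S[\mathrm{Im}K_h]=\mathtt S[\mathrm{Re}K_{h-\pi/2}]$; comparing \eqref{E:new-symm-Im} (for $h$) with \eqref{E:new-symm-Re} (for $h-\pi/2$) on non-collinear triples, where $c^2>0$, gives the identity $\mathcal R_{h-\pi/2}=-\mathcal R_h$. Thus ``$\tfrac12-\mathcal R_h$ changes sign'' is exactly ``$\tfrac12+\mathcal R_{h-\pi/2}$ changes sign'' for the non-constant function $h-\pi/2$, and it suffices to prove: for every non-constant continuous $h$ there is a non-collinear triple with $\mathcal R_h>\tfrac12$ and one with $\mathcal R_h<-\tfrac12$. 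I would obtain these from families of non-collinear triples along which $\mathcal R_h\to+\infty$ and, respectively, $\mathcal R_h\to-\infty$.

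For the blow-ups I would use the representation \eqref{E:RH} together with the elementary fact that a bounded concave (or convex) function on $\mathbb R$ is constant. Since $h$ is non-constant it is non-constant along some line $L_0$ (through two points where $h$ takes different values); fix a unit-speed parametrization $\gamma_0$ of $L_0$ and set $\theta_0:=\arg\gamma_0'$. The profile $g(\eta):=\cos\bigl(2h(\gamma_0(\eta))-2\theta_0\bigr)$ is a non-constant bounded function on $\mathbb R$ --- it cannot be constant, since $\cos(2h-2\theta_0)$ constant along the connected line $L_0$ would force $h$ constant there by continuity --- and hence is neither concave nor convex. Therefore $L_0$ carries collinear points $A=\gamma_0(\eta_A)$, $B=\gamma_0(\eta_B)$, $C=\gamma_0(\eta_C)$ with $\eta_A<\eta_B<\eta_C$ and $t:=|AB|/|AC|\in(0,1)$ for which
\[
(1-t)\cos(2h(A)-2\theta_0)+t\cos(2h(C)-2\theta_0)-\cos(2h(B)-2\theta_0)>0,
\]
together with a second such triple satisfying the reversed strict inequality. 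For the first triple consider the thin triangles $\mathbf z^{(\varepsilon)}=(A,\,C,\,B+\varepsilon n)$ for small $\varepsilon>0$, with $n$ the unit normal to $L_0$ obtained by rotating $\gamma_0'$ by $+\pi/2$; each $\mathbf z^{(\varepsilon)}$ is admissible in the sense of Definition \ref{D:admissible}, with $\alpha_{21}=\theta_0$. As $\varepsilon\to0^+$ one has $4\,\mathrm{Area}\,\Delta(\mathbf z^{(\varepsilon)})=2|AC|\varepsilon\to0$, $\ell_1\ell_2\ell_3\to|BC||AB||AC|>0$, $\theta_1,\theta_2\to0$, $\theta_3\to\pi$, and the bracket in \eqref{E:RH} tends to $|AC|$ times the (positive) left-hand side above; hence $\mathcal R_h(\mathbf z^{(\varepsilon)})\to+\infty$. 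The second triple, treated identically, produces thin triangles with $\mathcal R_h\to-\infty$, and part {\tt(b)} follows.

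\noindent\textbf{Where the difficulty lies.} The plan above is short; the one non-routine step is the asymptotic analysis of \eqref{E:RH} along the degenerating family --- checking admissibility and computing the limits of the angles $\theta_j$, the side lengths $\ell_j$, the area, and the three-term bracket. This is the type of degenerate-triangle computation developed in \cite[Proposition~3.1]{CP} and \cite[Lemma~6 and Figures~3--4]{C}, carried out for the kernels $K_h$ in \cite{LPg}, and I would follow those. I would also emphasize that Theorem \ref{T:L-infty} cannot simply be quoted here: it only gives that $|\mathcal R_h|$ is unbounded, whereas part {\tt(b)} needs $\mathcal R_h$ unbounded \emph{in both directions}, and the identity $\mathcal R_{h-\pi/2}=-\mathcal R_h$ by itself does not upgrade the former to the latter.
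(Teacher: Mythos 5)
The paper itself does not contain a proof of this theorem: the appendix explicitly defers all proofs for $\{K_h\}_h$ to the companion note \cite{LPg}, indicating only that the methods there use admissible-triple representations and thin-triangle degenerations in the spirit of \cite{CP}, \cite{C}, and the worked example in \S\ref{SS:failed rel bdd}. So there is no in-paper proof to compare against; I am assessing your argument on its own and against those hints.

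Your argument is correct and matches the intended methodology. Part {\tt(a)} via rotation-invariance of Menger curvature plus Melnikov's identity \eqref{E:sym-id-0pr} cleanly gives $\Rh\equiv 0$ for constant $h$. The identity $\mathcal R_{h-\pi/2}=-\mathcal R_h$ is a genuine simplification and correctly reduces the $\tfrac12-\Rh$ claim to the $\tfrac12+\Rh$ claim for the (equally non-constant) shift $h-\pi/2$. Your caveat that Theorem \ref{T:L-infty} cannot simply be quoted is well-taken: unboundedness of $|\Rh|$ does not by itself yield sign change, and the $\pi/2$-shift identity doesn't upgrade one-sided blow-up to two-sided blow-up, so the explicit two-sided construction is needed. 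The thin-triangle degeneration is set up correctly: the triple $(A,C,B+\varepsilon n)$ is admissible in the sense of Definition \ref{D:admissible} (projection of $z_3$ to $L_0$ is $B\in\mathrm{int}[A,C]$; the $+\pi/2$ rotation gives positive orientation); $\alpha_{21}=\theta_0$; the limits $\theta_1,\theta_2\to0$, $\theta_3\to\pi$, $\ell_j\to|BC|,|AB|,|AC|$, $4\,\mathrm{Area}\to 0$ are all right; and the bracket in \eqref{E:RH} tends to $|AC|\big[(1-t)g(\eta_A)+tg(\eta_C)-g(\eta_B)\big]$. The reduction to ``a bounded non-constant continuous function on $\mathbb R$ is neither concave nor convex'' is correct and is the nice elementary input that guarantees both signs of the convexity defect occur. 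The only point worth spelling out more fully in a final write-up is the one you already flag as non-routine — the verification of admissibility and the explicit limits — and you have in fact done that correctly.
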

\begin{cor}\label{C:ReKh-positive}
 Suppose that $h:\mathbb C\to \mathbb R$ is continuous. Then
 \begin{itemize}
 \item[{\tt{(a)}}] $h$ is constant \quad $\iff\quad \frac12 +\Rh(\mathbf z) > 0$
 \vskip0.05in
\item[] for all three-tuples
 of  non-collinear points.
\vskip0.1in
\item[{\tt{(b)}}] $h$ is constant \quad $\iff\quad \frac12 -\Rh(\mathbf z) > 0$
\vskip0.05in
\item[] for all three-tuples
 of  non-collinear points.
 \end{itemize}
\end{cor}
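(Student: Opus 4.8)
The plan is to deduce Corollary~\ref{C:ReKh-positive} directly from Theorem~\ref{T:positive}, so that no new computation is needed; the substance of the matter already resides in that theorem (and, for the forward implications, one may also invoke Theorem~\ref{T:L-infty}). Throughout, ``three-tuple'' means an unordered triple of \emph{non-collinear} points, so that $c^2(\mathbf z)\neq 0$ and the representations \eqref{E:new-symm-Re}, \eqref{E:new-symm-Im}, \eqref{E:RH} are meaningful.

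For part {\tt(a)}, the forward implication is exactly part {\tt(a)} of Theorem~\ref{T:positive}: if $h$ is constant, then $\frac12+\Rh(\mathbf z)>0$ for every non-collinear three-tuple $\mathbf z$. (Equivalently, when $h$ is constant Theorem~\ref{T:L-infty} gives $\Rh\equiv 0$, whence $\frac12+\Rh\equiv\frac12>0$.) For the reverse implication I would argue by contraposition: if $h$ is not constant, then by part {\tt(b)} of Theorem~\ref{T:positive} the map $\mathbf z\mapsto\frac12+\Rh(\mathbf z)$ changes sign, so there is a non-collinear three-tuple $\mathbf z'$ with $\frac12+\Rh(\mathbf z')<0$; hence the assertion ``$\frac12+\Rh(\mathbf z)>0$ for all non-collinear $\mathbf z$'' fails. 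This yields the equivalence in {\tt(a)}.

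Part {\tt(b)} follows by repeating the same reasoning with $\frac12-\Rh$ in place of $\frac12+\Rh$, which is legitimate because the closing sentence of Theorem~\ref{T:positive} states that both {\tt(a)} and {\tt(b)} of that theorem hold verbatim after this substitution; thus constancy of $h$ forces $\frac12-\Rh>0$ everywhere, and non-constancy produces a non-collinear three-tuple at which $\frac12-\Rh<0$.

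Since the argument is purely formal, there is no real obstacle at this stage; the two points requiring a moment of care are (i) that the non-collinearity hypothesis must be carried through uniformly, and (ii) that Theorem~\ref{T:positive}{\tt(b)} delivers a genuine \emph{strict} sign change rather than merely a vanishing point, so that the failure of the strict inequality $\frac12\pm\Rh>0$ is truly witnessed. The actual difficulty, namely constructing the sign-changing three-tuples from the trigonometric formula \eqref{E:RH}, is absorbed into the proof of Theorem~\ref{T:positive} (deferred to \cite{LPg}).
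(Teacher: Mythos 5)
Your argument is correct and is exactly the intended deduction: the forward implications are Theorem~\ref{T:positive}{\tt(a)} (or equivalently Theorem~\ref{T:L-infty}, which gives $\Rh\equiv 0$ when $h$ is constant), and the reverse implications follow by contraposition from the strict sign change in Theorem~\ref{T:positive}{\tt(b)} and its $\frac12-\Rh$ counterpart. The paper itself does not spell out a proof for this corollary, deferring all appendix proofs to \cite{LPg}, but the logical reduction you give is precisely the one implicit in the placement of the corollary immediately after Theorem~\ref{T:positive}.
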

\vskip0.2in
\subsection{Further results: the dual kernel of $K_h$}\label{S:last}
\noindent We define the dual kernel of $K_{h} (w, z)$ as 
\begin{equation}  K_h^{\ast} (w, z) = \overline{K_h(z, w)}. \label{def-Kh-star} \end{equation}  
Thus 
\[ K_h^{\ast} (w, z) = \frac{e^{-i h(z)}}{\overline{z} - \overline{w}}. \]
\noindent In particular
$$
   K_0^\ast(w, z) = -
 \overline{K_0(w, z)},
$$
 so that
 $\mathtt S\,[K_0^\ast](\mathbf z)\, =\,  \mathtt S\,[K_0](\mathbf z)\, =\,  c^2(\mathbf z)$.
On the other hand, for non-constant $h$ we have
  \vskip0.05in
\begin{prop}\label{P:symm-id-K-star} 
For any non-constant $h:\mathbb C\to \mathbb R$ and for any three-tuple $\mathbf z$ of non-collinear points
 in $\mathbb C$ we have
\begin{equation}\label{E:sym-adj}
  \mathtt S\,[K_h^\ast](\mathbf z) \ =\, 
c^2(\mathbf z)\, \H(\mathbf z)\,
\end{equation}
where $\H(\mathbf z)$ is a non-constant function of $\mathbf z$ that is 
  invariant under the permutations of the elements of $\mathbf z$. In particular, if $\mathbf z =(z_1, z_2, z_3)$ is admissible then
$\H (\mathbf z)$ has the following
 representation.
\begin{equation} \label{E:H}
\begin{aligned}
\H (\mathbf z) &=\frac{2\,\ell_1\ell_2\ell_3}{\left(4\mathrm{Area}\,\Delta(\mathbf z)\right)^2}\, \times 
\Bigl[
\ell_1\cos(h(z_2) - h(z_3) +\theta_1) + \\ 
&\hskip1in + \ell_2\cos (h(z_1)-h(z_3) - \theta_2)
+\ell_3\cos(h(z_1) -h(z_2) +\theta_3)
\Bigr]\, .
\end{aligned} 
\end{equation} 
Here $\theta_j$ and $\ell_j$ are as in the statement of Proposition \ref{P:symm-ids-Re-Im}. 
\end{prop}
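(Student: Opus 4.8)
The plan is to compute $\mathtt S[K_h^\ast](\mathbf z)$ straight from the definition \eqref{E:K-sym}, collapse the six summands of the sum over $S_3$ into three real cosines attached to the vertices of $\Delta(\mathbf z)$, and then divide by $c^2(\mathbf z)$. First I would use $K_h^\ast(w,z)=e^{-ih(z)}/(\bar z-\bar w)$ to write each summand with $\sigma(1)=j$ and $(\sigma(2),\sigma(3))=(m,n)$ as $e^{i(h(z_n)-h(z_m))}\big/\big((\overline{z_m}-\overline{z_j})(z_n-z_j)\big)$, and then group the six summands into three pairs by their common first entry $j$. Setting $a=z_m-z_j$ and $b=z_n-z_j$, the two terms of the $j$-th pair have common denominator $|a|^2|b|^2$ and numerator $2\,\mathrm{Re}\big(e^{i(h(z_n)-h(z_m))}\,a\bar b\big)$; writing $a\bar b=|a||b|\,e^{i(\arg(z_m-z_j)-\arg(z_n-z_j))}$ shows the $j$-th pair equals
\[
\frac{2}{|z_m-z_j|\,|z_n-z_j|}\,\cos\!\big(h(z_n)-h(z_m)+\arg(z_m-z_j)-\arg(z_n-z_j)\big),
\]
an expression visibly symmetric under $m\leftrightarrow n$, so only the apex $j$ matters.

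Next I would feed in the geometry of the triangle. For an \emph{admissible} triple $(z_1,z_2,z_3)$ the triangle $\Delta(\mathbf z)$ is positively oriented, which identifies $\arg(z_m-z_j)-\arg(z_n-z_j)$ with the signed interior angle at $z_j$ — equal to $\pm\tho$, the sign dictated by whether $(m,n)$ runs with or against the cyclic order $1\to2\to3$ — and $|z_m-z_j|\,|z_n-z_j|$ with the product of the two side lengths meeting at $z_j$. Carrying this out for $j=1,2,3$ gives
\begin{align*}
\mathtt S[K_h^\ast](\mathbf z)=\frac{2}{\ell_1\ell_2\ell_3}\Bigl[&\ell_1\cos\big(h(z_2)-h(z_3)+\tho\big)+\ell_2\cos\big(h(z_1)-h(z_3)-\ttw\big)\\
&{}+\ell_3\cos\big(h(z_1)-h(z_2)+\tth\big)\Bigr].
\end{align*}
Since \eqref{E:Marea-1} yields $c^2(\mathbf z)=\big(4\,\mathrm{Area}\,\Delta(\mathbf z)\big)^2/(\ell_1\ell_2\ell_3)^2$, dividing produces exactly the prefactor $2\ell_1\ell_2\ell_3/\big(4\,\mathrm{Area}\,\Delta(\mathbf z)\big)^2$ and the representation \eqref{E:H} for $\H(\mathbf z):=\mathtt S[K_h^\ast](\mathbf z)/c^2(\mathbf z)$.

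The remaining assertions are then quick. Permutation invariance of $\H$ is automatic: $\mathtt S[K_h^\ast]$ is a sum over all of $S_3$ and $c^2$ is symmetric, so $\H$ is a symmetric function of $\mathbf z$; moreover every non-collinear triple can be reordered into admissible form — take the vertex of largest interior angle as the third entry $z_3$ (so the other two angles are acute and the foot of the perpendicular from $z_3$ to the line $z_1z_2$ is interior to the segment), then transpose $z_1,z_2$ if needed for positive orientation — so \eqref{E:H} pins down $\H$ on all non-collinear triples. For non-constancy I would fix $p,q$ with $h(p)\neq h(q)$ (possible since $h$ is non-constant) and evaluate \eqref{E:H} along a family of triangles through $p,q$ that degenerates so that $4\,\mathrm{Area}\,\Delta\to0$ while the bracketed sum stays bounded away from zero; then $|\H|\to\infty$, so $\H$ cannot be constant. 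This is the same degeneration mechanism behind Theorem \ref{T:L-infty}--Theorem \ref{T:L-infty-H}.

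The one genuinely delicate point is the sign bookkeeping in the second step. The individual quantities $\arg(z_m-z_j)$ are coordinate dependent, yet they must recombine across the three pairs so that only the interior angles $\tho,\ttw,\tth$ and the differences $h(z_i)-h(z_k)$ survive — and, in contrast with \eqref{E:RH}, no residual $\ato$ remains, precisely because in $\mathtt S[K_h^\ast]$ the two phase factors $e^{\pm ih}$ sit at the two \emph{non-apex} vertices rather than at the apex. Matching the orientation convention encoded in ``admissible order'' with the exact placement of $+\tho$, $-\ttw$, $+\tth$ in \eqref{E:H} for each choice of apex is where care is required; the rest is routine trigonometry.
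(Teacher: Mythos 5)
The paper itself does not prove this proposition --- the proof is explicitly deferred to the auxiliary note \cite{LPg} --- so there is nothing in the text to compare against, but your derivation of the representation \eqref{E:H} is correct on its own terms. Grouping the six summands of $\mathtt S[K_h^\ast]$ by the common apex $j$, recognizing each pair as $2\,\mathrm{Re}$ of a single quotient, and reading $\arg(z_m-z_j)-\arg(z_n-z_j)$ as the signed interior angle yields exactly the three cosines with the signs $+\theta_1$, $-\theta_2$, $+\theta_3$ once positive (counterclockwise) orientation is imposed; a hand check with $z_1=0$, $z_2=1$, $z_3=x+iy$, $y>0$, $0<x<1$ confirms the pattern. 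Dividing by $c^2(\mathbf z)=(4\,\mathrm{Area}\,\Delta(\mathbf z))^2/(\ell_1\ell_2\ell_3)^2$ then gives \eqref{E:H}, and your arguments for permutation invariance and for re-ordering an arbitrary non-collinear triple into admissible form (largest angle last, transpose for orientation) are also fine.

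The one genuine gap is the non-constancy claim. You assert, without verification, that a degenerating family through $p,q$ with $h(p)\neq h(q)$ can be chosen so that $4\,\mathrm{Area}\,\Delta\to0$ while the bracketed sum stays bounded away from zero. This is not automatic: for constant $h$ the bracket vanishes like $(\mathrm{Area})^2$ (since $\H\equiv1$), and for non-constant continuous $h$ some natural degenerations also kill the bracket --- for instance, fixing $z_1=p$, $z_2=q$ and sending $z_3=p+\epsilon e^{i\phi}\to p$ (admissible for small $\epsilon$ and $0<\phi<\pi/2$) gives, in the limit, $|p-q|\bigl[\cos(h(q)-h(p)+\phi)-\cos(h(p)-h(q)-\phi)\bigr]=0$ by evenness of cosine, regardless of the values of $h$. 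Producing a degeneration that genuinely blows up $\H$ is essentially the content of Theorem \ref{T:L-infty-H}, which you cite as the mechanism behind your step, so as written the argument either outsources the key point to a later result or risks circularity if that theorem in turn relies on this proposition. Note also that for the proposition itself you need less than unboundedness --- exhibiting two admissible triples with distinct values of the bracketed expression (normalized by $(4\,\mathrm{Area}\,\Delta)^2/2\ell_1\ell_2\ell_3$) would do --- but that weaker verification is not supplied either; you should either produce such a pair of configurations explicitly or make the degeneration quantitative.
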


\vskip0.1in

 \begin{thm}\label{T:L-infty-H}
 Suppose that $h:\mathbb C\to \mathbb R$ is continuous.  The following are equivalent:
\begin{itemize}
\item[]
\item[{\tt (i)}]  \ \ 
There is  a constant $C<\infty$, possibly depending on $h$, such that $$|\H (\mathbf z)|\leq C$$
\ \ for any  three-tuple $\mathbf z=\{z_1, z_2, z_3\}$ of non-collinear points in $\mathbb C$.

\item[]
\item[{\tt (ii)}] \quad 
$\H (\mathbf z) \, =\, 1$ \, for any three-tuple
of non-collinear points in $\mathbb C$.
\item[]
\item[{\tt (iii)}] \quad
 $h$ is constant.
 \item[]
\end{itemize}
\end{thm}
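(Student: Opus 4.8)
The plan is to establish the cyclic chain $\mathrm{(iii)}\Rightarrow\mathrm{(ii)}\Rightarrow\mathrm{(i)}\Rightarrow\mathrm{(iii)}$, the last implication being the only substantive one. For $\mathrm{(iii)}\Rightarrow\mathrm{(ii)}$ I would argue directly from the definitions: if $h\equiv h_0$ is constant then $K_h^{\ast}(w,z)=-\,e^{-ih_0}\,\overline{K_0(w,z)}$, so each summand of $\mathtt S[K_h^\ast](\mathbf z)$ acquires the factor $(-e^{-ih_0})\,\overline{(-e^{-ih_0})}=1$, whence $\mathtt S[K_h^\ast](\mathbf z)=\overline{\mathtt S[K_0](\mathbf z)}=\overline{c^2(\mathbf z)}=c^2(\mathbf z)$; since $c^2(\mathbf z)>0$ for non-collinear $\mathbf z$ this gives $\H(\mathbf z)=\mathtt S[K_h^\ast](\mathbf z)/c^2(\mathbf z)=1$. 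The implication $\mathrm{(ii)}\Rightarrow\mathrm{(i)}$ is immediate with $C=1$.

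For $\mathrm{(i)}\Rightarrow\mathrm{(iii)}$ I would prove the contrapositive: assuming $h$ non-constant, I exhibit three-tuples along which $|\H|$ is unbounded. The engine is the geometric prefactor $2\ell_1\ell_2\ell_3/(4\,\mathrm{Area}\,\Delta(\mathbf z))^2$ in \eqref{E:H}, which blows up like the inverse square of the collapse parameter of a ``flat'' triangle whose side lengths stay bounded below. Fix $p\ne q$ with $h(p)\ne h(q)$, fix an interior point $m=(1-t)p+tq$, $t\in(0,1)$, of the segment $[p,q]$, and for small $\eta>0$ let $\mathbf z_\eta=(p,q,z_\eta)$ where $z_\eta$ lies at distance $\eta$ from the line through $p$ and $q$ and projects orthogonally onto $m$ (for $\eta>0$ the triple is non-collinear, and one orients it so as to be admissible). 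As $\eta\to0$ we have $\mathrm{Area}\,\Delta(\mathbf z_\eta)\sim\tfrac12|p-q|\,\eta$, the angles at $p$ and $q$ tend to $0$, the angle at $z_\eta$ tends to $\pi$, and $h(z_\eta)\to h(m)$ by continuity, so the bracket in \eqref{E:H} converges to
\[
B_0(m):=|q-m|\cos\big(h(q)-h(m)\big)+|p-m|\cos\big(h(p)-h(m)\big)-|p-q|\cos\big(h(p)-h(q)\big),
\]
while the prefactor grows like a positive constant times $\eta^{-2}$; hence $\H(\mathbf z_\eta)\to\infty$ unless $B_0(m)=0$. The whole matter therefore reduces to the claim: \emph{if $h$ is non-constant, then $B_0(m)\ne0$ for some admissible choice of $p,q,m$.}

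I expect this claim to be the main obstacle, because $B_0$ vanishes automatically in the endpoint limits $m\to p$ and $m\to q$, so genuinely interior information is needed. I would prove it by contradiction: suppose $B_0\equiv 0$ on every segment. First, applying this with $h(p)=h(q)$ forces $\cos(h(p)-h(m))\equiv 1$, i.e.\ $h(m)=h(p)$ for all $m\in[p,q]$, so $h$ is constant along every segment whose endpoints carry equal values. Second, choose $p,q$ with $0<|h(p)-h(q)|<2\pi/3$ (possible by continuity together with the intermediate value theorem along a segment joining two points where $h$ differs); then vanishing of $B_0$ at the midpoint reduces, after a sum-to-product simplification, to
\[
\cos\big(\tfrac12(h(p)+h(q))-h(m_{1/2})\big)=\cos(h(p)-h(q))\big/\cos\big(\tfrac12(h(p)-h(q))\big),
\]
and an elementary check shows the right-hand side is strictly smaller than $\cos\big(\tfrac12(h(p)-h(q))\big)$, which forces $h(m_{1/2})$ strictly outside the closed interval between $h(p)$ and $h(q)$; thus the oscillation of $h$ on a suitable sub-segment has strictly increased, by a factor bounded below by an absolute constant $>1$. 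Iterating, the oscillation of $h$ on nested sub-segments of the original compact segment either grows without bound — contradicting the boundedness of the continuous function $h$ on compacts — or eventually reaches $2\pi/3$, at which stage the displayed midpoint identity has no solution, again a contradiction. Hence $h$ is constant, which completes the contrapositive and establishes $\mathrm{(i)}\Rightarrow\mathrm{(iii)}$; the verification of the sub-claims about $g(\delta)=\cos\delta/\cos(\delta/2)$ (monotonicity, the bound $g(\delta)<\cos(\delta/2)$, and the uniform lower bound $>1$ on the growth factor) is routine.
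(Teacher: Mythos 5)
Your implications $\mathrm{(iii)}\Rightarrow\mathrm{(ii)}$ and $\mathrm{(ii)}\Rightarrow\mathrm{(i)}$ are correct, and the flattening construction you use for the contrapositive of $\mathrm{(i)}\Rightarrow\mathrm{(iii)}$ --- sending a vertex to the segment $[p,q]$ so the geometric prefactor in \eqref{E:H} blows up like $\eta^{-2}$ while the bracket converges to $B_0(m)$ --- is the right engine and is consistent with the triangle-degeneration technique the paper references from Chousionis--Prat and Chunaev (the actual proof is deferred to the auxiliary note \cite{LPg}, so a line-by-line comparison is not possible here). The limit computation of $B_0$ and the reduction of the whole theorem to the claim ``$h$ non-constant $\Rightarrow B_0\not\equiv 0$'' are correct, as is Step 1 (with $h(p)=h(q)$ and the connectedness argument forcing $h$ constant on $[p,q]$).

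The iteration in Step 2, however, has two gaps as written. First, after the midpoint identity places $h(m_{1/2})$ strictly outside the interval between $h(p)$ and $h(q)$, the new oscillation is $\delta_{n+1}=\delta_n/2+|\theta|$ where $\theta=\tfrac12(h(p_n)+h(q_n))-h(m_n)$ satisfies $\cos\theta=g(\delta_n)$; since $\theta$ is only pinned down modulo $2\pi$ and up to sign, $|\theta|$ need not equal the principal value $\arccos(g(\delta_n))$, so the new oscillation can jump anywhere --- in particular it may jump past $\pi$, where your ``no-solution'' observation (which holds only for $\delta\in(2\pi/3,\pi)$) no longer applies. Second, the phrase ``contradicting the boundedness of $h$ on compacts'' is not quite the right contradiction when the nested sub-segments shrink to a point. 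Both gaps are closed by one adjustment: before iterating, shrink the initial segment $[p_0,q_0]$ (by uniform continuity of $h$) so that the total oscillation of $h$ on $[p_0,q_0]$ is less than, say, $\pi/3$. Then every iterate $[p_n,q_n]\subset[p_0,q_0]$ has $\delta_n<\pi/3<2\pi/3$ automatically, the midpoint identity stays in force, and since the midpoints give nested intervals of length $\to 0$ shrinking to a point $m^*$, continuity of $h$ at $m^*$ forces $\delta_n\to 0$; but the strict monotonicity $\delta_{n+1}>\delta_n\geq\delta_0>0$ (which you already proved and which does not need the harder ``uniform multiplicative factor $>1$'' sub-claim) gives the contradiction. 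With that fix the argument is complete; without it the ``eventually reaches $2\pi/3$'' endgame is not justified.

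Two minor remarks on the uniform-growth sub-claim you flag as routine: one has $\cos(\delta/2)-g(\delta)=\sin^2(\delta/2)/\cos(\delta/2)$, which cleanly gives $g(\delta)<\cos(\delta/2)$ for $\delta\in(0,2\pi/3)$; and $\arccos(g(\delta))/\delta\to\sqrt3/2$ as $\delta\to 0^+$, so the growth factor $\tfrac12+\arccos(g(\delta))/\delta$ does tend to $(1+\sqrt3)/2>1$ --- but since the continuity-at-$m^*$ argument above makes the multiplicative factor unnecessary, you can skip this verification altogether.
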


\end{document}